\theoremstyle{plain}
\newtheorem{lem}{Lemma}[section]
\newtheorem{prop}[lem]{Proposition}
\newtheorem{thm}[lem]{Theorem}
\newtheorem{cor}[lem]{Corollary}
\newtheorem{defn}[lem]{Definition}
\newtheorem{example}[lem]{Example}
\newcommand{\ind}{\operatorname{ind}}
\newcommand{\half}{\frac{1}{2}}
\newcommand{\bbf}{\mathbb{F}}
\newcommand{\catmod}{\rm \textbf{mod}}
\newcommand{\sym}{\mathfrak{S}}
\begin{document}
\title[The  partition algebras in positive characteristic]{The blocks  of the partition algebra in positive characteristic }
\author{C.~Bowman}
  \email{chris.bowman.2@city.ac.uk}
 \author{M.~De Visscher}
  \email{Maud.Devisscher.1@city.ac.uk }
  \author{O.~King}
   \email{Oliver.King.1@city.ac.uk }
\address{Department of Mathematics,
 City University London,
 Northampton Square,
 London,
 EC1V 0HB,
 England.}

\subjclass[2000]{20C30} 
\date{\today}

 \maketitle

\begin{abstract}
In this paper we describe the blocks of the partition algebra over a field of positive characteristic.
\end{abstract}

\section*{Introduction}

The partition algebra $P_n^\mathbb{C}(\delta)$ was originally defined by P. Martin in \cite{MR1103994} over the complex field $\mathbb{C}$ as a generalisation of the Temperley-Lieb algebra for $\delta$-state $n$-site Potts models in statistical mechanics. Although this interpretation requires $\delta$ to be integral, it is possible to define the partition algebra $P_n^{\mathbb{F}}(\delta)$ over any field $\mathbb{F}$ and for any $\delta\in \mathbb{F}$. 
In a subsequent paper \cite{martin1996structure} P.~Martin investigates the representation theory of $P_n^\mathbb{C}(\delta)$. In particular, he showed that $P_n^\mathbb{C}(\delta)$ is semisimple unless $\delta \in \{0,1,\ldots , 2n-2\}$. Moreover, in the non-semisimple case (and when $\delta \neq 0$), he gave a complete description of the blocks of $P_n^\mathbb{C}(\delta)$. 

The aim of this paper is to describe the blocks of the partition algebra $P_n^\bbf(\delta)$ over a field $\bbf$ of positive characteristic when $\delta \neq 0$. Very little investigation has been made into the representation theory of the partition algebra in positive characteristic. It was shown by C. Xi in \cite{xi1999partition} that, for an arbitrary field $\bbf$ and an arbitrary parameter $\delta\in \bbf$, the partition algebra $P_n^\bbf(\delta)$ is cellular (as defined in \cite{Graham1996cellular}). We will use this cellular structure to prove our results.

The main result of this paper is given in Theorem 8.9.
It gives a description of the blocks of the partition algebra $P_n^\bbf(\delta)$, when $\delta$ lies in the prime subfield of $\bbf$, in terms of the action of an affine reflection group of type $A$. Similar results have been obtained for other diagram algebras.  In \cite{cox2009geometric}, A. Cox, M. De Visscher and P. Martin gave a description of the blocks of the Brauer algebra over the complex field in terms of the action of a reflection group of type $D$. In the same paper, they showed that the action of the corresponding affine reflection group provides a necessary condition for the blocks of the Brauer algebra over a field of positive characteristic. They note however that this is not a sufficient condition. Similar results have also been obtained for the walled Brauer algebra in \cite{cox2008walled}.
For the partition algebra, we show here that the description of the blocks in characteristic zero given by Martin can be rephrased in terms of the action of a reflection group of type $A$. Then, by contrast to the Brauer algebra case, the action of the corresponding affine reflection group provides a necessary and sufficient condition for the blocks of the partition algebra in positive characteristic.

The paper is organised as follows. Sections 1--5 are mainly expository. The new results are contained in Sections 6--9. In Section 1, we give a brief review of the representation theory of cellular algebras. In particular, we explain how the so-called \lq cell-blocks' determine the blocks of the algebra, justifying the focus on cell-blocks in this paper.
In Section 2, we give all the necessary combinatorics of partitions needed for the paper. Section 3 recalls the main results on the modular representation theory of the symmetric group. In Section 4 we give the definition of the partition algebra and the half partition algebras (which will be used in our proofs) and review the construction and properties of their cell modules, following \cite{martin1996structure} and \cite{xi1999partition}.  In Section 5 we review the main results on the representation theory of the partition algebra over a field of characteristic zero as developed in \cite{martin1996structure}. We conclude this section by rephrasing Martin's description of the blocks in terms of the action of a reflection group of type $A$ (see Theorem 5.4).

In Sections 6-9 we study the blocks of the partition algebra $P_n^\bbf(\delta)$ over a field $\bbf$ of positive characteristic when $\delta\in \bbf$ is non-zero. In Section 6, we use the action of the Jucys-Murphy elements on the cell modules to find a necessary condition for the blocks (Corollary 6.7). We then split into two cases, depending on the parameter $\delta$. In Section 7 we assume that  $\delta$ is not in the prime subfield $\mathbb{F}_p\subset \bbf$. In this case, we can easily deduce from the necessary condition from Section 6 and known results connecting the representation theory of the symmetric groups and that of the partition algebra that the blocks are simply given by blocks of the corresponding  symmetric group algebras (Theorem 7.2). In Section 8, we assume that $\delta \in \bbf_p$ (and $\delta \neq 0$). This case is more complicated. We introduce the notion of a $\delta$-marked abacus associated to each partition, and use the modular representation theory of the symmetric groups, together with the ordinary representation theory of the partition algebra and \lq reduction modulo $p$' arguments  to obtain a combinatorial description of the blocks of the partition algebra in this case (Theorem 8.8). We then reformulate this result in terms of the action of an affine reflection group of type $A$ in Theorem 8.9.
In the final section, Section 9, we define the notion of limiting blocks for the partition algebra, coming from certain full embeddings of categories of $P_n^\bbf(\delta)$-modules  as $n$ increases. We show that, surprisingly, we can give a proof for the limiting blocks (when $\delta \in \bbf_p$) which does not use the modular representation theory of the symmetric group.

\vspace{1cm}

\noindent \textbf{\large{Notations:}} The following notations will be used throughout the paper, except for Section 1.
 We fix a prime number $p$ and  a $p$-modular system $(K,R,k)$, that is, $R$ is a discrete valuation ring with maximal ideal $\mathfrak{m}=(\pi)$, $K={\rm Frac}(R)$ is its field of fractions (of characteristic zero) and $k=R/\mathfrak{m}$ is the residue field of characteristic $p$. We assume that $K$ and $k$ are algebraically closed. We will identify $\mathbb{Z}$ with $\mathbb{Z}1_R$ in $R$. This will allow us to consider elements of $\mathbb{Z}$ in $K$ via the embedding $R\hookrightarrow K$, and in $k$ via the projection $R\twoheadrightarrow k$. We will use $\mathbb{F}$ to denote either $K$ or $k$.

\section{Representation theory of cellular algebras: a short review}

J. Graham and G. Lehrer defined a new class of algebras, called cellular algebras, in \cite{Graham1996cellular}, and developed their representation theory. The symmetric group algebra and the partition algebra are examples of such algebras.

We will not need the precise definition of a cellular algebra (see \cite[(1.1)]{Graham1996cellular}) but will recall the main properties needed for this paper. All the details can be found in \cite{Graham1996cellular} or \cite[Chapter 2]{Mathas1999Hecke}.

Let $A$ be an associative algebra over a commutative ring $R$ with identity. If the algebra $A$ is cellular then it comes with a  basis satisfying certain multiplicative properties given in terms of a poset $(\Lambda, \leq)$. For each $\lambda\in \Lambda$ we have an $A$-module $W(\lambda)$, called the cell module corresponding to $\lambda$. Moreover, there is an $R$-bilinear form $\phi_\lambda$ on each $W(\lambda)$. 

Now assume that $R$ is a field and that $A$ is finite dimensional. Let $\Lambda_0$ be the subset of $\Lambda$ consisting of all $\lambda\in \Lambda$ such that $\phi_\lambda \neq 0$. Then for all $\lambda\in \Lambda_0$ the cell module $W(\lambda)$ has a unique simple quotient $L_\lambda$ (given as the quotient of $W(\lambda)$ by the radical of $\phi_\lambda$). Moreover, the set
$$\{L_\lambda \,\, : \,\, \lambda\in \Lambda_0\}$$
gives a complete set of non-isomorphic simple $A$-modules (see \cite[(3.4)]{Graham1996cellular}).

For each $\lambda\in \Lambda$ and $\mu\in \Lambda_0$ we denote by $d_{\lambda \mu}$ the multiplicity of $L_\mu$ as a composition factor of $W(\lambda)$. The matrix $D=(d_{\lambda\mu})_{\lambda\in\Lambda, \, \mu\in \Lambda_0}$ is called the decomposition matrix of $A$.

For each $\lambda\in \Lambda_0$ we denote by $P_\lambda$ the projective indecomposable $A$-module corresponding to $\lambda$, that is the projective cover of $L_\lambda$. For $\lambda, \mu\in \Lambda_0$ we denote by $c_{\lambda \mu}$ the multiplicity of $L_\mu$ as a composition factor of $P_\lambda$. The matrix $C=(c_{\lambda\mu})_{\lambda,\mu\in\Lambda_0}$ is known as the Cartan matrix of $A$.

\begin{thm}\label{thmcelldec} \cite[(3.6) and (3.7)]{Graham1996cellular} 

(i) The matrix $D$ is unitriangular; i.e. $d_{\lambda \mu}=0$ unless $\lambda\leq \mu$, and $d_{\lambda \lambda}=1$.

(ii) We have $C=D^t D$.
\end{thm}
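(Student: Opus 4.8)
The plan is to extract both statements directly from the multiplicative properties of the cellular basis, in the spirit of Graham and Lehrer. Write the cellular basis as $\{a^\lambda_{s,t} : \lambda\in\Lambda,\ s,t\in M(\lambda)\}$, let $\ast$ denote the cellular anti-involution (so that $(a^\lambda_{s,t})^\ast=a^\lambda_{t,s}$), and for $\lambda\in\Lambda$ let $A^{<\lambda}$ and $A^{\le\lambda}$ be the subspaces spanned by the $a^\mu_{s,t}$ with $\mu<\lambda$ and $\mu\le\lambda$ respectively. The defining axioms show that these are two-sided ideals, that $A^{<\lambda}$ annihilates the cell module $W(\lambda)$, and that $W(\lambda)$ together with its form $\phi_\lambda$ arises from the subquotient $A^{\le\lambda}/A^{<\lambda}$, which as an $(A,A)$-bimodule is isomorphic to $W(\lambda)\otimes_R W(\lambda)^{\circ}$, where $W(\lambda)^{\circ}$ denotes the $\ast$-dual (``costandard'') cell module. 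These are the only structural inputs I will use.

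For part (i), recall that for $\mu\in\Lambda_0$ the module $L_\mu=W(\mu)/\rad\phi_\mu$ is the simple head of $W(\mu)$, so $d_{\mu\mu}\ge 1$. The key observation is that, because $\phi_\mu\neq 0$, the cell layer at $\mu$ (the image of the $a^\mu_{u,v}$) acts nontrivially on $L_\mu$, whereas every composition factor of $W(\lambda)$ is annihilated by $A^{<\lambda}$. Since the cell layer at $\mu$ lies in $A^{<\lambda}$ exactly when $\mu<\lambda$, a composition factor $L_\mu$ of $W(\lambda)$ cannot have $\mu<\lambda$; combining this one-sided bound with the identification of $L_\lambda$ as the head and a closer analysis of $\phi_\lambda$ yields the sharp statement $d_{\lambda\mu}=0$ unless $\lambda\le\mu$. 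To obtain $d_{\lambda\lambda}=1$ I would use the almost-idempotent $a^\lambda_{u,v}$ with $\phi_\lambda(v,u)\neq 0$: the cell layer at $\lambda$ acts on $W(\lambda)$ with one-dimensional image lying outside $\rad\phi_\lambda$, so $L_\lambda$ is detected exactly once and cannot recur inside the radical.

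For part (ii), I would pass to projectives via a cell filtration and a reciprocity argument. The bimodule description of $A^{\le\lambda}/A^{<\lambda}$ shows that the left regular module $A$ has a filtration whose sections are the $W(\lambda)$, each occurring $\dim_R W(\lambda)$ times. Using that $\ast$ fixes each simple ($L_\mu^{\circ}\cong L_\mu$), together with the orthogonality $\dim_R\Hom_A(W(\lambda),W(\nu)^{\circ})=\delta_{\lambda\nu}$ and the vanishing $\Ext^1_A(W(\lambda),W(\nu)^{\circ})=0$, every projective indecomposable $P_\mu$ inherits a cell filtration whose multiplicities are computed by
\[
(P_\mu:W(\nu))=\dim_R\Hom_A(P_\mu,W(\nu)^{\circ})=[W(\nu)^{\circ}:L_\mu]=[W(\nu):L_\mu]=d_{\nu\mu}.
\]
Summing composition multiplicities along this filtration then gives, for $\lambda,\mu\in\Lambda_0$,
\[
c_{\lambda\mu}=[P_\lambda:L_\mu]=\sum_{\nu\in\Lambda}(P_\lambda:W(\nu))\,[W(\nu):L_\mu]=\sum_{\nu\in\Lambda}d_{\nu\lambda}d_{\nu\mu}=(D^{t}D)_{\lambda\mu},
\]
which is exactly $C=D^{t}D$.

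The routine parts are the bimodule computation of the cell layers and the bookkeeping with composition multiplicities. The main obstacle is the reciprocity step in part (ii): establishing that the projective indecomposables carry a cell filtration and that its multiplicities are governed by the decomposition numbers. This rests on the $\Hom$- and $\Ext$-orthogonality between cell modules and their $\ast$-duals, which must be read off carefully from the cellular bimodule filtration of $A$ rather than imported from quasi-heredity, since a cellular algebra need not be quasi-hereditary. A secondary delicate point is pinning the sharp partial-order bound $\lambda\le\mu$ in part (i), as opposed to the weaker bound that the annihilator argument produces directly.
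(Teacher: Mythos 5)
The paper does not prove this statement at all: Theorem \ref{thmcelldec} is quoted directly from Graham and Lehrer as background, so the only meaningful comparison is with their original argument. Measured against that, your part (i) is essentially the right outline, with the one soft spot you already flag: the annihilator argument as you state it (the $\mu$-layer lies in $A^{<\lambda}$, which kills $W(\lambda)$, exactly when $\mu<\lambda$) only excludes $\mu<\lambda$, not incomparable $\mu$. The standard sharpening is that the product $a^\mu_{st}a^\lambda_{uv}$ lies in the ideal $A^{\le\mu}$ while also being congruent modulo $A^{<\lambda}$ to a combination of the $a^\lambda_{wv}$; linear independence of the cellular basis then forces this to vanish unless $\lambda\le\mu$, so the entire $\mu$-layer annihilates $W(\lambda)$ whenever $\lambda\not\le\mu$. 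That is a fillable gap.

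Part (ii) contains a genuine gap. The orthogonality you rely on, $\dim_R\Hom_A(W(\lambda),W(\nu)^{\circ})=\delta_{\lambda\nu}$ and $\Ext^1_A(W(\lambda),W(\nu)^{\circ})=0$, is a quasi-hereditary phenomenon and is \emph{false} for cellular algebras in general, so it cannot be ``read off from the cellular bimodule filtration.'' Concretely, take $A=k[x]/(x^2)$ with the identity anti-involution and cell chain $0\subset (x)\subset A$: both cell modules and both duals $W(\nu)^{\circ}$ are the trivial module $k$, and $\Hom_A(k,k)=\Ext^1_A(k,k)=k\neq 0$, so both orthogonality statements fail off the diagonal. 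Without them, your reciprocity step --- that the direct summand $P_\mu$ of $A$ inherits a cell filtration with well-defined multiplicities computed by $\dim\Hom_A(P_\mu,W(\nu)^{\circ})$ --- is not established. The intermediate statement you want, $(P_\lambda:W(\nu))=d_{\nu\lambda}$, is nevertheless true for cellular algebras, but the correct route avoids $\Ext$ entirely: write $P_\lambda=Ae$ for a primitive idempotent $e$, filter by the ideals $A^{\le\nu}e$, and use the bimodule isomorphism $A^{\le\nu}/A^{<\nu}\cong W(\nu)\otimes_R W(\nu)$ together with the $*$-symmetry and $L_\lambda^{\circ}\cong L_\lambda$ to identify the $\nu$-th section with $W(\nu)^{\oplus m}$ where $m=\dim W(\nu)e=[W(\nu):L_\lambda]=d_{\nu\lambda}$. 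Equivalently, one computes
$$c_{\lambda\mu}=\dim e_\mu Ae_\lambda=\sum_{\nu}\dim\bigl(e_\mu W(\nu)\bigr)\cdot\dim\bigl(W(\nu)e_\lambda\bigr)=\sum_\nu d_{\nu\mu}d_{\nu\lambda}$$
directly from the bimodule filtration. With that substitution the remaining bookkeeping in your argument goes through.
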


 Now, assume for a moment that $A$ is any finite dimensional algebra (not necessarily cellular) with $\Lambda_0$ indexing its simple modules. Then $A$ decomposes uniquely as a direct sum of indecomposable 2-sided ideals
\begin{equation}\label{blockidempotent}
A=e_1A\oplus e_2A \oplus \ldots \oplus e_tA
\end{equation}
where $1=e_1+e_2+\ldots + e_t$ is a decomposition of $1$ as a sum of primitive central idempotents. The direct summands in (\ref{blockidempotent}) are called the blocks of $A$. We say that an $A$-module, $M$,  belongs to the block $e_iA$ if $e_iM=M$ and $e_jM=0$ for all $j\neq i$. In particular a simple module always belongs to a block, and an arbitrary module $M$ belongs to a block, $e_iA$ say, if and only if all of its composition factors belong to $e_iA$. So we can describe the blocks of $A$ by describing the labellings $\lambda\in \Lambda_0$ of all simple modules belonging to the same block. It is well known that these can be obtained from the Cartan matrix of $A$ as follows. Let $\lambda,\mu\in \Lambda_0$. We say that $\lambda, \mu$ are linked if $c_{\lambda \mu}\neq 0$. Then the \emph{blocks} of $A$ are given by the equivalence classes of the equivalence relation on $\Lambda_0$ generated by this linkage.

If we now go back to our assumption that the algebra $A$ is cellular, we have a further equivalent description of the blocks via cell modules which we now recall. Let $\lambda\in\Lambda$ and $\mu\in\Lambda_0$. We say that $\lambda,\mu$ are cell-linked if $d_{\lambda \mu}\neq 0$. The equivalence classes of the equivalence relation on $\Lambda$ generated by this cell-linkage are called the \emph{cell-blocks} of $A$. It follows from Theorem \ref{thmcelldec} that each block of $A$ is given as the intersection of a cell-block with $\Lambda_0$ (see \cite[(3.9.8)]{Graham1996cellular}). Thus for a cellular algebra, the problem of determining the blocks is equivalent to determining the cell-blocks. For that reason, we will be focussing on cell-blocks in this paper. 

\begin{defn} Let $A$ be any finite dimensional cellular algebra over a field and denote by $\Lambda$ the poset indexing the cell $A$-modules. For each $\lambda\in \Lambda$ we define $\mathcal{B}_\lambda(A)\subseteq \Lambda$ to be the cell-block of $A$ containing $\lambda$. 
\end{defn}

\section{Combinatorics of partitions}

\subsection*{Partitions and Young diagrams}\label{sec:partitions}

Given a natural number $n$, we define a partition $\lambda=(\lambda_1,\lambda_2,...)$ of $n$ to be a weakly decreasing sequence of non-negative integers such that $\sum_{i>0}\lambda_i=n$.
As we have $\lambda_i=0$ for $i\gg0$  we will often truncate the sequence and write $\lambda=(\lambda_1,\dots,\lambda_l)$, where $\lambda_l\neq0$ and $\lambda_{l+1}=0$. We say that $l(\lambda):=l$ is the length of the partition $\lambda$. We also combine repeated entries and use exponents, for instance the partition $(5,5,3,2,1,1,0,0,0,\dots)$ of $17$ will be written $(5^2,3,2,1^2)$. We use the notation $\lambda\vdash n$ to mean $\lambda$ is a partition of $n$. We call $n$ the degree of the partition and write $n=|\lambda|$.

We say that a partition $\lambda=(\lambda_1,\dots,\lambda_l)$ is \emph{$p$-singular} if there exists $t$ such that
	\[\lambda_t=\lambda_{t+1}=\dots=\lambda_{t+p-1}>0\]
i.e. some (non-zero) part of $\lambda$ is repeated $p$ or more times.
Partitions that are not $p$-singular we call \emph{$p$-regular}.

We denote by $\Lambda_n$ the set of all partitions of $n$. We also define $\Lambda_{\leq n}=\cup_{0\leq i\leq n}\Lambda_n$. We will also consider $\Lambda_n^*$ and $\Lambda_{\leq n}^*$,  the subsets of $p$-regular partitions of $\Lambda_n$ and $\Lambda_{\leq n}$  respectively.

There is a  partial order on $\Lambda_{\leq n}$ called the \emph{dominance order (with degree)}, which we denote by $\preceq$. For $\lambda, \mu \in \Lambda_{\leq n}$ we say that $\lambda \prec \mu$ if either $|\lambda|<|\mu|$, or $\lambda \neq \mu$, $|\lambda|=|\mu|$ and $\sum_{i=1}^j \lambda_i \leq \sum_{i=1}^j \mu_j$ for all $j\geq 1$. We write $\lambda \preceq \mu$ to mean $\lambda \prec \mu$ or $\lambda = \mu$.

To each partition $\lambda$ we may associate the Young diagram 
	\[[\lambda]=\{(x,y)~|~x,y\in\mathbb{Z},~1\leq x\leq l,~1\leq y \leq \lambda_x\}.\]
An element $(x,y)$ of $[\lambda]$ is called a \emph{node}. If $\lambda_{i+1}<\lambda_{i}$, then the node $(i,\lambda_i)$ is called a \emph{removable} node of $\lambda$. If $\lambda_{i-1}>\lambda_i$, then we say the node $(i,\lambda_i+1)$ of $[\lambda]\cup\{(i,\lambda_i+1)\}$ is an \emph{addable} node of $\lambda$. This is illustrated in Figure \ref{fig:youngdiag}. If a partition $\mu$ is obtained from $\lambda$ by removing a removable (resp. adding an addable) node then we write $\mu\triangleleft\lambda$ (resp. $\mu\triangleright\lambda$). We will also write $\mu = \lambda - \varepsilon_i$ (resp. $\mu=\lambda + \varepsilon_i$) if $\mu$ is obtained from $\lambda$ by removing (resp. adding) a node in row $i$.

	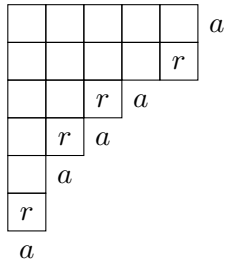
\begin{figure}
		\centering
		\begin{tikzpicture}[scale=0.5]
			\foreach \x in {0,...,4}
				{\draw (\x,5) rectangle +(1,-1);}
			\foreach \x in {0,...,4}
				{\draw (\x,4) rectangle +(1,-1);}
			\foreach \x in {0,...,2}
				{\draw (\x,3) rectangle +(1,-1);}
			\draw (0,2) rectangle +(1,-1);
			\draw (1,2) rectangle +(1,-1);
			\draw (0,1) rectangle +(1,-1);
			\draw (0,0) rectangle +(1,-1);
			
			\draw (4.5,3.45) node {$r$};
			\draw (2.5,2.45) node {$r$};
			\draw (1.5,1.45) node {$r$};
			\draw (0.5,-0.55) node {$r$};

			\draw (5.5,4.45) node {$a$};
			\draw (3.5,2.45) node {$a$};
			\draw (2.5,1.45) node {$a$};
			\draw (1.5,0.45) node {$a$};
			\draw (0.5,-1.55) node {$a$};
		\end{tikzpicture}
		\caption{The Young diagram of $\lambda=(5^2,3,2,1^2)$. Removable nodes are marked by $r$ and addable nodes by $a$.}
		\label{fig:youngdiag}
	\end{figure}

Each node $\varepsilon=(x,y)$ of $[\lambda]$ has an associated integer, $c(\varepsilon)$, called the \emph{content} of $\varepsilon$, given by $c(\varepsilon)=y-x$. We write \begin{equation}\label{eq:sumofcontents}
	\mathrm{ct}(\lambda)=\sum_{\varepsilon\in[\lambda]}c(\varepsilon).
\end{equation}

For partitions $\lambda$ and $\mu$, we write $\mu\subseteq \lambda$ if the Young diagram of $\mu$ is contained in the Young diagram of $\lambda$, i.e. if $\mu_i \leq \lambda_i$ for all $i\geq 1$. We write $\mu\subset \lambda$ if $\mu\subseteq \lambda$ and $\mu\neq \lambda$.

\subsection*{Abacus}\label{sec:abacus}

Following \cite[Section 2.7]{james1981representation} we can associate to each partition and prime number $p$ an abacus diagram, consisting of $p$ columns, known as runners, and a configuration of beads across these. By convention we label the runners from left to right, starting with 0, and the positions on the abacus are also numbered from left to right, working down from the top row, starting with $0$ (see Figure \ref{fig:symabacus}). Given a partition $\lambda=(\lambda_1,\dots,\lambda_l)\vdash n$, fix a positive integer $b\geq n$ and define the  $\beta$-sequence of $\lambda$ to be the $b$-tuple
	\[\beta(\lambda,b)=(\lambda_1-1+b,\lambda_2-2+b,\dots,\lambda_l-l+b,-(l+1)+b,\dots).\]
Then place a bead on the abacus in each position given by $\beta(\lambda,b)$, so that there are a total of $b$ beads across the runners. Note that for a fixed value of $b$, the abacus is uniquely determined by $\lambda$, and any such abacus arrangement corresponds to a partition simply by reversing the above. Here is an example of such a construction.

	\begin{example}
		In this example we will fix the values $p=5,n=9,b=10$ and represent the partition $\lambda=(5,4)$ on the abacus. Following the above process, we first calculate the $\beta$-sequence of $\lambda$:
			\begin{eqnarray*}
				\beta(\lambda,10)	&=&(5-1+10,~4-2+10,\;-3+10,\;-4+10,\dots,\;-9+10,\;-10+10)\\
							&=&(14,12,7,6,5,4,3,2,1,0).
			\end{eqnarray*}
		The next step is to place beads on the abacus in the corresponding positions. We also number the beads, so that bead $1$ occupies position $\lambda_1-1+b$, bead $2$ occupies position $\lambda_2-2+b$ and so on. The labelled spaces and the final abacus with labelled beads are shown in Figure \ref{fig:symabacus}.
			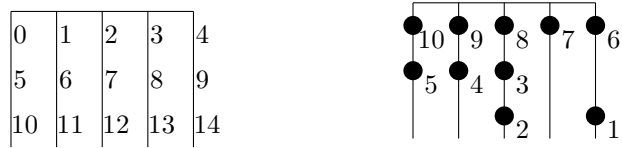
\begin{figure}
				\centering
				\begin{tikzpicture}[scale=0.6]
					\draw (0,3)--(4,3);
					\foreach \x in {0,...,4}
						{\draw (\x,3)--(\x,0);
						\draw (\x+0.2,2.5) node {\small$\x$};}
					\foreach \x in {5,...,9}
						{\draw (\x-5+0.2,1.5) node {\small$\x$};}
					\foreach \x in {10,...,14}
						{\draw (\x-10+0.3,0.5) node {\small$\x$};}
				\end{tikzpicture}
				\hspace{2cm}
				\begin{tikzpicture}[scale=0.6]
					\draw (0,3)--(4,3);
					\foreach \x in {0,...,4}
						{\draw (\x,3)--(\x,0);}
					\foreach \x in {0,...,4}
						{\fill[black] (\x,2.5) circle (6pt);}
					\foreach \x in {0,1,2}
						{\fill[black] (\x,1.5) circle (6pt);}
					\foreach \x in {2,4}
						{\fill[black] (\x,0.5) circle (6pt);}
						
					\foreach \x in {10,...,6}
						{\draw (10-\x+0.4,2.5-0.3) node {\small$\x$};}
					\foreach \x in {5,4,3}
						{\draw (5-\x+0.4,1.5-0.3) node {\small$\x$};}
					\draw (2.4,0.2) node {\small$2$};
					\draw (4.4,0.2) node {\small$1$};
				\end{tikzpicture}

				\caption{The positions on the abacus with 5 runners and the arrangement of beads (numbered) representing $\lambda=(5,4)$.}\label{fig:symabacus}
			\end{figure}
	\end{example}
After fixing values of $p$ and $b$, we will abuse notation and write $\lambda$ for both the partition and the corresponding abacus with $p$ runners and $b$ beads. We then define \newline $\Gamma(\lambda,b)=(\Gamma(\lambda,b)_0,\Gamma(\lambda,b)_1,\dots,\Gamma(\lambda,b)_{p-1})$, where 
	\begin{equation}\label{eq:gammaabacus}
		\Gamma(\lambda,b)_i=\big|\{j:\beta(\lambda,b)_j\equiv i\text{ mod }p\}\big|
	\end{equation}
so that $\Gamma(\lambda,b)$ records the number of beads on each runner of the abacus of $\lambda$.

We define the  \emph{$p$-core} of the partition $\lambda$ to be the partition $\mu$ whose abacus is obtained from that of $\lambda$ by sliding all the beads as far up their runners as possible. In particular, we have $\Gamma(\lambda, b)=\Gamma(\mu,b)$. It can be shown that the $p$-core $\mu$ is independent of the choice of $b$, and so depends only on $\lambda$ and $p$. The $5$-core of the partition $(5,4)$ given in the example above is the partition $(3,1)$. Their abaci are illustrated in Figure \ref{fig:symcore}. 

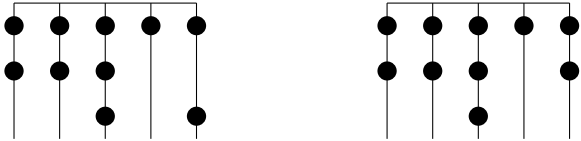
\begin{figure}
\centering

\begin{tikzpicture}[scale=0.6]
					\draw (0,3)--(4,3);
					\foreach \x in {0,...,4}
						{\draw (\x,3)--(\x,0);}
					\foreach \x in {0,...,4}
						{\fill[black] (\x,2.5) circle (6pt);}
					\foreach \x in {0,1,2}
						{\fill[black] (\x,1.5) circle (6pt);}
					\foreach \x in {2,4}
						{\fill[black] (\x,0.5) circle (6pt);}

				\end{tikzpicture}
\hspace{2cm}
\begin{tikzpicture}[scale=0.6]
					\draw (0,3)--(4,3);
					\foreach \x in {0,...,4}
						{\draw (\x,3)--(\x,0);}
					\foreach \x in {0,...,4}
						{\fill[black] (\x,2.5) circle (6pt);}
					\foreach \x in {0,1,2,4}
						{\fill[black] (\x,1.5) circle (6pt);}
					\foreach \x in {2}
						{\fill[black] (\x,0.5) circle (6pt);}
				\end{tikzpicture}
\caption{The abaci of the partition $\lambda=(5,4)$ and its $5$-core, the partition $(3,1)$.}\label{fig:symcore}
\end{figure}

\section{Representation theory of the symmetric group}

We denote by $\sym_n$ the symmetric group of degree $n$. In this section, we will briefly recall some results in the representation theory of $\sym_n$ which will be needed later in the paper.

The group algebra $R\sym_n$ is a cellular algebra, as shown in \cite{Graham1996cellular}. The cell modules are indexed by partitions $\lambda\in \Lambda_n$ and are more commonly known as \emph{Specht modules}. We denote the Specht module indexed by $\lambda$ by $S^\lambda_R$. These can be constructed explicitly, see for example \cite[Chapter 4]{james1978representation}. We define the $K\sym_n$-module $S_{K}^\lambda := K\otimes_{R}S_R^\lambda$ and the $k\sym_n$-module $S_k^\lambda := k\otimes_R S_R^\lambda$.  These are the cell modules for $K\sym_n$ and $k\sym_n$ respectively.

\begin{thm}[{\cite[Theorem 4.12]{james1978representation}}]
	The set of all $S_K^\lambda$, $\lambda \in \Lambda_n$, gives a complete set of pairwise non-isomorphic simple $K\sym_n$-modules.
\end{thm}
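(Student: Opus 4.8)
The plan is to exploit the cellular structure of $K\sym_n$ together with the semisimplicity forced by $\Char K=0$. Since $K$ has characteristic zero, Maschke's theorem guarantees that $K\sym_n$ is semisimple, so every projective indecomposable module coincides with its simple head. In the notation of Section~1 this says precisely that the Cartan matrix is the identity, i.e. $c_{\lambda\mu}=0$ for $\lambda\neq\mu$ and $c_{\lambda\lambda}=1$ for all $\lambda,\mu\in\Lambda_0$.

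First I would feed this into the factorisation $C=D^tD$ of Theorem~\ref{thmcelldec}(ii), obtaining $D^tD=I$. Thus the columns of the decomposition matrix $D$, indexed by $\mu\in\Lambda_0$, form an orthonormal family. Each such column consists of non-negative integers and, by the unitriangularity of Theorem~\ref{thmcelldec}(i), has diagonal entry $d_{\mu\mu}=1$. An orthonormal family of non-negative integer vectors whose diagonal entries equal $1$ can only consist of distinct standard basis vectors, so $d_{\lambda\mu}=0$ whenever $\lambda\neq\mu$ and $d_{\lambda\lambda}=1$, for all $\lambda\in\Lambda_n$ and $\mu\in\Lambda_0$. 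Reading this off by rows, the cell module $S_K^\lambda$ with $\lambda\in\Lambda_0$ has $L_\lambda$ as its unique composition factor, whence $S_K^\lambda=L_\lambda$ is simple; and for $\lambda\notin\Lambda_0$ the corresponding row would be identically zero, which is impossible because $S_K^\lambda\neq 0$. Hence $\Lambda_0=\Lambda_n$ and every Specht module is simple.

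It then remains only to invoke the general cellular formalism recalled in Section~1: the modules $L_\lambda=S_K^\lambda$, as $\lambda$ ranges over $\Lambda_0=\Lambda_n$, form a complete set of pairwise non-isomorphic simple $K\sym_n$-modules. They are pairwise non-isomorphic because distinct labels yield non-isomorphic $L_\lambda$, and complete because $\{L_\lambda:\lambda\in\Lambda_0\}$ already exhausts the simples. I do not anticipate a serious obstacle here, the whole argument being powered by Maschke's theorem; the only point needing a moment's care is the deduction that $D^tD=I$ together with unitriangularity forces $D$ to be a rectangular identity, which simultaneously pins down $\Lambda_0=\Lambda_n$. A parallel route, avoiding the matrix manipulation entirely, is to count: the number of simple $K\sym_n$-modules equals the number of conjugacy classes of $\sym_n$, namely $|\Lambda_n|$, which matches the number of cell modules and forces $\Lambda_0=\Lambda_n$, after which simplicity of each $S_K^\lambda$ follows from semisimplicity and unitriangularity. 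Either approach is short.
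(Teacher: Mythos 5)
Your argument is correct, but it is worth noting that the paper offers no proof of this statement at all: it is quoted directly from James's book, where the classical proof runs through the submodule theorem. There one shows that every $K\sym_n$-submodule of the permutation module $M^\lambda$ either contains the Specht module $S^\lambda$ or lies in its orthogonal complement with respect to the natural bilinear form; over a field of characteristic zero $S^\lambda\cap (S^\lambda)^\perp=0$, so $S^\lambda$ is irreducible, and non-isomorphism for distinct $\lambda$ follows from the dominance-order constraint on homomorphisms $S^\lambda\to M^\mu$, with completeness by counting conjugacy classes. Your route instead leans entirely on the cellular formalism recalled in Section~1: Maschke gives $C=I$, the factorisation $C=D^tD$ together with unitriangularity and non-negativity of the decomposition numbers forces $D$ to be the identity on $\Lambda_0$ and zero elsewhere, and the non-vanishing of each Specht module (it has a basis of standard polytabloids) rules out $\Lambda_0\subsetneq\Lambda_n$. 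Both steps are sound --- in particular the observation that a non-negative integer vector of Euclidean norm $1$ with a prescribed entry equal to $1$ must be a standard basis vector is exactly what is needed. What your approach buys is uniformity: it is the general statement that a semisimple cellular algebra has simple cell modules, and it applies verbatim to $P_n^K(\delta)$ in the semisimple regime. What it costs is that the cellularity of $K\sym_n$ (via the Murphy basis) is itself established by combinatorics of comparable depth to the submodule theorem, so the argument is not logically cheaper, merely better packaged; your alternative counting argument via conjugacy classes is the standard shortcut and is equally valid.
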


\begin{thm}[{\cite[Theorem 11.5]{james1978representation}}] 
	For $\lambda\in \Lambda_n^*$, the Specht module $S_k^\lambda$ has simple head, denoted by $D_k^\lambda$. Moreover the set of all $D_k^\lambda$, $\lambda\in \Lambda_n^*$ gives a complete set of pairwise non-isomorphic simple $k\sym_n$-modules.
\end{thm}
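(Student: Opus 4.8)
The plan is to derive the statement from the cellular structure of $k\sym_n$ recalled in Section 1, thereby reducing the whole theorem to a single combinatorial nonvanishing criterion. Write $\phi_\lambda$ for the cellular bilinear form carried by the cell module $S_k^\lambda$, and let $\Lambda_0\subseteq\Lambda_n$ be the set of $\lambda$ with $\phi_\lambda\neq 0$. By the general theory quoted in Section 1, for every $\lambda\in\Lambda_0$ the module $S_k^\lambda$ has a unique simple quotient $L_\lambda=S_k^\lambda/\rad\phi_\lambda$, and $\{L_\lambda:\lambda\in\Lambda_0\}$ is a complete set of pairwise non-isomorphic simple $k\sym_n$-modules. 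Hence it is enough to prove that $\Lambda_0=\Lambda_n^*$ and then to define $D_k^\lambda:=L_\lambda$.

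To compute the form I would follow James \cite{james1978representation}. Realize $S_k^\lambda$ as a submodule of the Young permutation module $M^\lambda$, which carries a natural $\sym_n$-invariant symmetric form for which the basis of $\lambda$-tabloids is orthonormal; the cellular form $\phi_\lambda$ is then the restriction of this form to $S_k^\lambda$. The structural input is James's submodule theorem: any $k\sym_n$-submodule of $M^\lambda$ either contains $S_k^\lambda$ or lies inside its orthogonal complement $(S_k^\lambda)^{\perp}$. Granting this, whenever $\phi_\lambda\neq 0$ the radical $\rad\phi_\lambda=S_k^\lambda\cap(S_k^\lambda)^{\perp}$ is the unique maximal submodule of $S_k^\lambda$, which re-proves the simple-head assertion and identifies the cellular $L_\lambda$ with the James module $D_k^\lambda$.

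The main obstacle is the nonvanishing criterion $\phi_\lambda\neq 0\iff\lambda\in\Lambda_n^*$, which is the genuinely arithmetic point. I would evaluate the form on a suitably chosen polytabloid $e_t$ over the ring $R$, that is, a diagonal entry of the Gram matrix of $\phi_\lambda$ in the polytabloid basis, and then reduce modulo $p$. A direct count shows that this value is, up to a unit, a product of factorials of the multiplicities of the parts of $\lambda$; such a product is divisible by $p$ exactly when some part of $\lambda$ is repeated $p$ or more times, i.e. exactly when $\lambda$ is $p$-singular. Thus $\phi_\lambda=0$ for $p$-singular $\lambda$, while for $p$-regular $\lambda$ the same computation exhibits a pair of vectors with nonzero inner product, so $\phi_\lambda\neq 0$. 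This yields $\Lambda_0=\Lambda_n^*$. Alternatively, once $\phi_\lambda=0$ is known for all $p$-singular $\lambda$, the equality $\Lambda_0=\Lambda_n^*$ also follows by a counting argument: the number of simple modules equals the number of $p$-regular conjugacy classes of $\sym_n$, and the generating-function identity $\prod_{i\ge 1}(1-x^{pi})/(1-x^i)$ shows this equals $|\Lambda_n^*|$.

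Finally, non-isomorphism and completeness of $\{D_k^\lambda:\lambda\in\Lambda_n^*\}$ are immediate from the cellular statement of Section 1 applied to $\Lambda_0=\Lambda_n^*$. Concretely, Theorem \ref{thmcelldec}(i) gives that the decomposition matrix is unitriangular for the dominance order, so $D_k^\lambda$ occurs in $S_k^\lambda$ with multiplicity one and occurs in $S_k^\mu$ only when $\mu\preceq\lambda$; this makes the pairwise non-isomorphism transparent, and completeness is exactly the general cellular conclusion.
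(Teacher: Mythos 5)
This statement is quoted by the paper directly from James's book and is given no proof there, so the only meaningful comparison is with James's own argument, which your outline essentially reconstructs through the cellular formalism of Section 1. The reduction of the whole theorem to the single claim $\Lambda_0=\Lambda_n^*$, the use of the submodule theorem to identify $\rad\phi_\lambda$ with the unique maximal submodule, and the final appeal to unitriangularity are all correct and constitute the standard route.

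The gap is in the arithmetic step. The diagonal Gram entry you propose to compute is
$\langle e_t,e_t\rangle=\sum_{\pi,\sigma\in C_t}\sgn(\pi)\sgn(\sigma)\langle\pi\{t\},\sigma\{t\}\rangle=|C_t|=\prod_j(\lambda'_j)!$,
a product of factorials of the \emph{column lengths} of $\lambda$, not of the multiplicities of its parts. This is divisible by $p$ as soon as $\lambda$ has $p$ or more nonzero parts --- e.g.\ for $\lambda=(2,1^{p-1})$, which is $p$-regular --- so this entry neither detects $p$-regularity nor proves $\phi_\lambda\neq0$ in the cases you need. The computation you are reaching for (James, Lemma 10.4) evaluates an \emph{off-diagonal} entry $\langle e_{t_1},e_{t_2}\rangle$ for a carefully chosen pair of tableaux and obtains $\pm\prod_i z_i!$ with $z_i$ the part multiplicities; that is the correct input for the implication ($\lambda$ $p$-regular $\Rightarrow\phi_\lambda\neq0$). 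The converse implication is a second, independent gap: the vanishing of any single Gram entry, diagonal or not, says nothing about $\phi_\lambda=0$, so your argument does not establish that $p$-singular partitions lie outside $\Lambda_0$ --- and that is exactly what the completeness assertion requires. The standard repair is the counting argument you mention, but deployed in the opposite logical direction to how you state it: by cellularity $|\Lambda_0|$ is the number of simple $k\sym_n$-modules, which by Brauer's theorem is the number of $p$-regular conjugacy classes of $\sym_n$, which by Glaisher's generating-function identity equals $|\Lambda_n^*|$; combined with the inclusion $\Lambda_n^*\subseteq\Lambda_0$ already proved, this forces $\Lambda_0=\Lambda_n^*$. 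With the Gram entry corrected and the counting argument put in its proper place, your proof closes.
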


The problem of describing the decomposition numbers for $k\sym_n$ remains wide open. But the (cell-)blocks of this algebra are well-known.

	\begin{thm}[Nakayama's Conjecture]\cite[Chapter 6]{james1981representation}\label{nakayama}
		Let $\lambda, \mu\in \Lambda_n$. Then $\mu\in \mathcal{B}_\lambda(k\sym_n)$ if and only if $\lambda$ and $\mu$ have the same $p$-core, that is $\Gamma(\lambda, b)=\Gamma(\mu,b)$ for some (and hence all) $b\geq n$.
		\label{thm:nakayama}
	\end{thm}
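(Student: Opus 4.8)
The plan is to argue entirely within the cellular framework of Section~1. By the discussion following Theorem~\ref{thmcelldec}, the cell-blocks of $k\sym_n$ are the equivalence classes on $\Lambda_n$ generated by the cell-linkage relation ``$d_{\lambda\mu}\neq 0$'' (with $\mu\in\Lambda_n^*$, i.e. $D_k^\mu$ a composition factor of $S_k^\lambda$). So it suffices to prove two things: first, that $d_{\lambda\mu}\neq 0$ forces $\lambda$ and $\mu$ to have the same $p$-core (whence the $p$-core is constant on cell-blocks, since equal $p$-core is itself an equivalence relation); and second, that any two partitions of $n$ sharing a $p$-core are joined by a chain of such linkages.

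For the necessary direction I would exploit the Jucys--Murphy elements $L_i=\sum_{j<i}(j,i)$. The elementary symmetric polynomials $z_r=e_r(L_1,\dots,L_n)$ are central in $R\sym_n$, and on the simple module $S_K^\lambda$ each $z_r$ acts by the scalar $e_r\big(\{c(\varepsilon):\varepsilon\in[\lambda]\}\big)\in\mathbb{Z}\subseteq R$ (for $r=1$ this is simply $\ct(\lambda)$). Since $z_r$ acts $R$-linearly on the lattice $S_R^\lambda$ and becomes this integer scalar after extending to $K$, it already acts by that scalar on $S_R^\lambda$, hence by its reduction in $k$ on $S_k^\lambda$. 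Thus every composition factor of $S_k^\lambda$ carries the same central character, so $d_{\lambda\mu}\neq 0$ gives $e_r(\text{contents of }\lambda)\equiv e_r(\text{contents of }\mu)\pmod p$ for all $r$. As these are the coefficients of $\prod_{\varepsilon}\big(x-\overline{c(\varepsilon)}\big)\in k[x]$, this forces $\lambda$ and $\mu$ to have the same multiset of contents modulo $p$. Reading this off the abacus --- the residues of a partition are those of its $p$-core together with $w$ full copies of $\{0,\dots,p-1\}$, where $w$ is the $p$-weight --- shows precisely that $\Gamma(\lambda,b)=\Gamma(\mu,b)$.

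For the sufficient direction I would connect any two partitions of $n$ with a common $p$-core by composition-factor linkages built from $i$-restriction and $i$-induction, i.e. the refinements of ordinary restriction and induction obtained by projecting onto the generalised $L_n$-eigenspace for each residue $i$. These functors preserve blocks, by the same central-character computation as above. On the abacus, two partitions with equal runner bead-counts are related by sliding beads along their runners, and a single such slide (adding or removing a rim $p$-hook) is realised by a composition of the refined functors; to remain inside $\Lambda_n$ one alternates a residue-$i$ removal with a residue-$i$ addition. It then remains to verify that the resulting graph on the set of partitions of $n$ with a fixed $p$-core is connected, which is a purely combinatorial statement about bead configurations.

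I expect the sufficiency to be the main obstacle. The necessary direction is essentially the clean central-character calculation and a factorisation of a polynomial over $k$, whereas showing that all same-core partitions lie in a single block requires both the more delicate input that $i$-induction and $i$-restriction respect blocks and the abacus connectivity argument. This is exactly the content for which we defer to \cite[Chapter 6]{james1981representation}.
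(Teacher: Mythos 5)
This is a statement the paper imports verbatim from \cite[Chapter 6]{james1981representation}; the paper offers no proof of its own, so there is nothing internal to compare your argument against line by line. Judged on its own terms, your outline follows the classical route. The necessity half is essentially sound: the symmetric polynomials in the Jucys--Murphy elements are central, act on $S_K^\lambda$ by the integer scalars $e_r$ of the contents, hence act by the reductions of those integers on the lattice $S_R^\lambda$ and on $S_k^\lambda$, and central characters are constant on composition factors; so $d_{\lambda\mu}\neq 0$ forces the multisets of contents of $\lambda$ and $\mu$ to agree modulo $p$. Two points there still need explicit justification: that the eigenvalue of $e_r(L_1,\dots,L_n)$ on $S_K^\lambda$ really is $e_r$ of the contents (a seminormal-form computation), and the purely combinatorial lemma that for $\lambda,\mu\in\Lambda_n$ equality of residue multisets is equivalent to $\Gamma(\lambda,b)=\Gamma(\mu,b)$; the latter is true but is itself a small theorem (Littlewood), not a one-line abacus observation.

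The genuine gap is the sufficiency half, and you have correctly identified it as such. As written, the claim that a single bead slide ``is realised by a composition of the refined functors'' while alternating a residue-$i$ removal with a residue-$i$ addition does not yet produce a chain of nonzero decomposition numbers between the two specific partitions of $n$ you want to link: $i$-restriction and $i$-induction only tell you that certain cell modules share composition factors with certain induced or restricted modules, and extracting from this a cell-linkage inside $\Lambda_n$ between two prescribed same-core partitions is exactly the delicate induction on the $p$-weight carried out in \cite[Chapter 6]{james1981representation}. Since the paper itself treats the whole theorem as a citation, deferring this half to the same source is consistent with the paper's usage; but your text should not present the sufficiency paragraph as a proof sketch that merely ``remains to verify a combinatorial connectivity statement,'' because the representation-theoretic step (that each edge of your graph really is a block linkage) is the hard part, not the connectivity.
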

		
We now give another characterisation of the cell-blocks of $k\sym_n$, which will be useful when considering the block of the partition algebra. Write any partition $\lambda=(\lambda_1, \lambda_2, \ldots , \lambda_l)\in \Lambda_n$ as an $n$-tuple  by defining $\lambda_i=0$ for all $l<i\leq n$. Now define the $n$-tuple $\rho_n = (-1,-2,-3, \ldots , -n)$. For $x,y\in \mathbb{Z}_n$ we write  $x\sim_p y$ if and only if there exists a permutation $\sigma\in \sym_n$ such that $x_i\equiv y_{\sigma(i)} \, \mbox{mod} \, p$ for all $1\leq i\leq n$. Then Theorem \ref{nakayama} can be rephrased as follows.

\begin{thm}\label{newnakayama}
For any $\lambda,\mu\in \Lambda_n$ we have that $\mu\in \mathcal{B}_\lambda(k\sym_n)$ if and only if $\lambda + \rho_n \sim_p \mu+\rho_n$
\end{thm}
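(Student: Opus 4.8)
The plan is to show that the combinatorial condition ``$\lambda + \rho_n \sim_p \mu + \rho_n$'' is precisely a reformulation of ``$\lambda$ and $\mu$ have the same $p$-core,'' so that the result follows immediately from Nakayama's Conjecture (Theorem~\ref{nakayama}). The whole proof is really a translation between two encodings of the same $p$-core data: the multiset of $\beta$-numbers reduced modulo $p$. First I would fix $b = n$ and recall that the $\beta$-sequence $\beta(\lambda,n) = (\lambda_1 - 1 + n, \lambda_2 - 2 + n, \ldots, \lambda_n - n + n)$, where we pad $\lambda$ to an $n$-tuple by setting $\lambda_i = 0$ for $i > l(\lambda)$. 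The key observation is that, entrywise,
\[
\beta(\lambda, n)_i = \lambda_i - i + n = (\lambda + \rho_n)_i + n,
\]
so that $\beta(\lambda,n)$ and $\lambda + \rho_n$ differ by the constant $n$ in every coordinate. Since adding a global constant does not affect congruences modulo $p$ up to a simultaneous shift, I would note that $\beta(\lambda,n)_i \equiv (\lambda+\rho_n)_i + n \pmod p$ for all $i$.

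Next I would connect the runner-count vector $\Gamma$ to the relation $\sim_p$. By definition~\eqref{eq:gammaabacus}, $\Gamma(\lambda,n)_j$ counts how many entries of $\beta(\lambda,n)$ are congruent to $j$ modulo $p$, i.e. $\Gamma(\lambda,n)$ records the multiset $\{\beta(\lambda,n)_i \bmod p : 1 \le i \le n\}$ as a vector of multiplicities. The relation $x \sim_p y$ says exactly that the multisets $\{x_i \bmod p\}$ and $\{y_i \bmod p\}$ coincide (a permutation $\sigma$ matching them up entrywise modulo $p$ exists if and only if the two multisets of residues are equal). Hence I would establish the chain of equivalences: $\lambda$ and $\mu$ have the same $p$-core $\iff \Gamma(\lambda,n) = \Gamma(\mu,n) \iff$ the residue multisets of $\beta(\lambda,n)$ and $\beta(\mu,n)$ agree $\iff \beta(\lambda,n) \sim_p \beta(\mu,n)$. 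The first equivalence is the content of Theorem~\ref{nakayama} together with the fact, recalled in the abacus subsection, that the $p$-core is determined by $\Gamma$; the middle two are the definitions just unpacked.

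Finally I would close the loop by removing the constant shift. Because $\beta(\lambda,n)_i \equiv (\lambda + \rho_n)_i + n$ and $\beta(\mu,n)_i \equiv (\mu + \rho_n)_i + n \pmod p$, and because the common summand $n$ can be absorbed (translating every residue by the fixed amount $n \bmod p$ is a bijection on $\mathbb{Z}/p$ that commutes with taking multisets), we get $\beta(\lambda,n) \sim_p \beta(\mu,n) \iff \lambda + \rho_n \sim_p \mu + \rho_n$. Combining this with the chain above yields $\mu \in \mathcal{B}_\lambda(k\sym_n) \iff \lambda + \rho_n \sim_p \mu + \rho_n$, as required.

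I expect the only genuinely delicate point to be the bookkeeping around the choice of $b$ and the padding of partitions to $n$-tuples: one must check that using exactly $b = n$ beads is legitimate (it is, since $b \ge n$ is all that is required, and Theorem~\ref{nakayama} explicitly allows ``some and hence all'' $b \ge n$) and that the zero-padding of $\lambda$ to length $n$ matches the tail $-(l+1)+b, \ldots$ of the $\beta$-sequence. Everything else is a routine verification that the map $x \mapsto \{x_i \bmod p\}$ intertwines the stated relations, so the argument is essentially a clean dictionary check rather than a computation with real content.
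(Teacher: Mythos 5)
Your proof is correct and is exactly the translation the paper has in mind: the paper states Theorem~\ref{newnakayama} without proof as an immediate rephrasing of Nakayama's Conjecture, relying on precisely the dictionary you spell out, namely that $\beta(\lambda,n)=(\lambda+\rho_n)+(n,\dots,n)$, that $\Gamma(\lambda,n)$ records the multiset of residues of $\beta(\lambda,n)$ modulo $p$, and that $\sim_p$ is equality of residue multisets. Your bookkeeping with $b=n$ and the zero-padding is also right, so there is nothing to add.
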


	\subsection*{Reflection geometry.}

We now give a final characterisation of the cell-blocks of $k\sym_n$ in terms of the action of an affine reflection group.
Let $\{\varepsilon_1, \varepsilon_2, \ldots , \varepsilon_n\}$ be a set of formal symbols and set
$$E_n=\bigoplus_{i=1}^{n} \mathbb{R}\varepsilon_i$$
to be the $n$-dimensional real vector space with basis $\varepsilon_1, \varepsilon_2, \ldots ,\varepsilon_n$. 
We have an inner product $\langle \, , \, \rangle$ given by extending linearly the relations
$$\langle \varepsilon_i, \varepsilon_j\rangle = \delta_{ij}$$
for all $1\leq i, j \leq n$, where $\delta_{ij}$ is the Kronecker delta. Let $\Phi_n = \{\varepsilon_i - \varepsilon_j\, : 1\leq i,j\leq n\}$ be a root system of type $A_{n-1}$, and define $W_n$ to be the corresponding Weyl group generated by the reflections $s_{i,j}$ (for $1\leq i,j\leq n$) given by
$$s_{i,j}(x)=x-\langle x,\varepsilon_i-\varepsilon_j\rangle (\varepsilon_i - \varepsilon_j)$$
for all $x\in E_n$. The affine Weyl group $W_n^p$ is generated by $W_n$ and translations by $p(\varepsilon_i - \varepsilon_j)$ (for all $1\leq i,j \leq n$). It is also generated by the affine reflections
$s_{i,j,rp}$ (for $1\leq i,j\leq n$ and $r\in \mathbb{Z}$) given by
$$s_{i,j,rp}(x)=x-(\langle x,\varepsilon_i-\varepsilon_j\rangle -rp)(\varepsilon_i - \varepsilon_j)$$
for all $x\in E_n$. 

 We will identify each element $x=x_1\varepsilon_1 + x_2 \varepsilon_2 + \ldots + x_n\varepsilon_n\in E_n$ with the $n$-tuple $(x_1, x_2, \ldots , x_n)$. Now  consider the shifted action of $W_n^p$ by $\rho_n=(-1,-2,\ldots , -n)$ given by
$$w \cdot x = w(x+\rho_n)-\rho_n$$
for all $w\in W_n^p$ and $x\in E_n$. We view each partition $\lambda\in \Lambda_n$ as an $n$-tuple and hence as an element of $E_n$. With these definitions we can now reformulate Theorem \ref{newnakayama} as follows.

\begin{thm}\label{geomnakayama}
Let $\lambda ,\mu\in \Lambda_n$. Then we have $\mu\in \mathcal{B}_\lambda(k\sym_n)$ if and only if $\mu\in W_n^p \cdot \lambda$.
\end{thm}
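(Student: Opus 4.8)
The plan is to deduce the statement purely combinatorially from Theorem~\ref{newnakayama}, which already identifies $\mathcal{B}_\lambda(k\sym_n)$ with the set of $\mu\in\Lambda_n$ satisfying $\lambda+\rho_n\sim_p\mu+\rho_n$. It therefore suffices to show that, for $\lambda,\mu\in\Lambda_n$,
\[
\mu\in W_n^p\cdot\lambda \iff \lambda+\rho_n\sim_p\mu+\rho_n ,
\]
after which the theorem follows immediately. Everything then reduces to unwinding the two descriptions and checking that they coincide.

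First I would make the shifted action explicit. The finite Weyl group $W_n$ acts on $E_n$ by permuting the coordinates $\varepsilon_1,\dots,\varepsilon_n$, while the translation part of $W_n^p$ is generated by the vectors $p(\varepsilon_i-\varepsilon_j)$; these span the scaled root lattice $pQ$, where $Q=\{v\in\mathbb{Z}^n:\sum_i v_i=0\}$. Since $W_n$ preserves $Q$, the affine group is the semidirect product $W_n^p\cong\sym_n\ltimes pQ$, so every $w\in W_n^p$ acts by $w(x)=\sigma(x)+pv$ for some $\sigma\in\sym_n$ and $v\in Q$. By the definition of the shifted action this gives
\[
\mu\in W_n^p\cdot\lambda \iff \mu+\rho_n=\sigma(\lambda+\rho_n)+pv \text{ for some } \sigma\in\sym_n,\ v\in Q .
\]

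Next I would compare this with $\sim_p$. Writing $z=\lambda+\rho_n$ and $z'=\mu+\rho_n$, the condition on the right says that $z'$ is a coordinate permutation of $z$ shifted by a sum-zero vector in $p\mathbb{Z}^n$, whereas $z\sim_p z'$ asks only that $z'$ be congruent mod $p$, coordinatewise, to some permutation of $z$. The forward implication is immediate: if $z'=\sigma(z)+pv$ then $z'_i\equiv z_{\sigma^{-1}(i)}\bmod p$, so $z\sim_p z'$. For the converse, $z\sim_p z'$ furnishes $\sigma\in\sym_n$ with $z'-\sigma(z)\in p\mathbb{Z}^n$, say $z'-\sigma(z)=pw$ with $w\in\mathbb{Z}^n$; the only point requiring work is that $w$ lands in the root lattice $Q$, i.e. that $\sum_i w_i=0$.

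This sum-zero condition is the main (and essentially only) obstacle, and it is exactly where the hypothesis $\lambda,\mu\in\Lambda_n$, that is $|\lambda|=|\mu|=n$, is used. Since $\sum_i(\rho_n)_i=-\tfrac{1}{2}n(n+1)$, both $z$ and $z'$ have coordinate sum $n-\tfrac{1}{2}n(n+1)$; as coordinate permutation preserves this sum, $\sum_i\big(z'_i-\sigma(z)_i\big)=0$, forcing $\sum_i w_i=0$ and hence $w\in Q$. This closes the converse and establishes the displayed equivalence, completing the proof through Theorem~\ref{newnakayama}. I would expect no further technical difficulty, the content being entirely in correctly identifying the translation lattice as $pQ$ and in observing that equal degree makes the passage from $p\mathbb{Z}^n$ to $Q$ automatic.
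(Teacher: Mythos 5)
Your proposal is correct and follows essentially the same route as the paper: both directions reduce to comparing $\mu+\rho_n\sim_p\lambda+\rho_n$ with the shifted $W_n^p$-orbit, and the only substantive step in each is that $|\lambda|=|\mu|$ forces the translation vector to have coordinate sum zero, hence to lie in $\mathbb{Z}\Phi_n$. Your write-up just spells out the semidirect product structure more explicitly than the paper does.
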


\begin{proof}
We need to prove that $\mu +\rho_n \sim_p \lambda + \rho_n$ if and only if $\mu \in W_n^p \cdot \lambda$.
It is clear that if $\mu\in W_n^p \cdot \lambda$ then $\mu + \rho_n \sim_p \lambda + \rho_n$.
Now if $\mu + \rho_n \sim_p \lambda + \rho_n$ then, by definition, there exists $w\in W_n$ and $x\in \mathbb{Z}^n$ with $\mu +\rho_n = w(\lambda + \rho_n) +px$. Now as $\sum_{i=1}^n\lambda_i = \sum_{i=1}^n\mu_i$, we must have $\sum_{i=1}^n x_i=0$, and so $x\in \mathbb{Z}\Phi_n$ as required. 
\end{proof}

\section{The partition algebra: definition and cellularity}

In this section we give the definitions of the partition algebra and the half partition algebras and recall their cellular structure.

\subsection*{Definitions and first properties}

For a fixed $n\in\mathbb{N}$ and $\delta\in \mathbb{F}$, we define the partition algebra $P_n^\mathbb{F}(\delta)$ to be the set of $\bbf$-linear combinations of set-partitions of $\{1,2,\dots,n,\bar1,\bar2,\dots,\bar n\}$. We call each connected component of a set-partition a \emph{block}. For instance,
	\[\big\{\{1,3,\bar3,\bar4\},\{2,\bar1\},\{4\},\{5,\bar2,\bar5\}\big\}\]
is a set-partition with $n=5$ consisting of 4 blocks. Any block with $\{i,\bar j\}$ as a subset for some $i$ and $j$ is called a \emph{propogating block}. We can represent each set-partition by a \emph{partition diagram} (or $n$-partition diagram), consisting of two rows of $n$ nodes, $n$ northern nodes indexed by $1,2, \ldots ,n$ and $n$ southern nodes indexed by $\bar{1}, \bar{2}, \ldots ,\bar{n}$, with arcs between nodes in the same block. Note that in general there are many partition diagrams corresponding to the same set-partition. For example if we take $n=5$, then some of the diagrams representing the set-partition   $\big\{\{1,3,\bar3,\bar4\},\{2,\bar1\},\{4\},\{5,\bar2,\bar5\}\big\}$  are given in Figure \ref{fig:threediagrams}. We will identify partition diagrams corresponding to the same set-partition.
	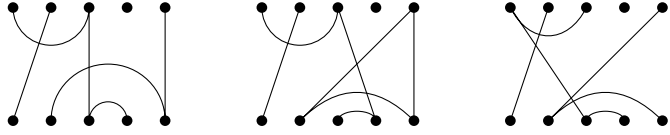
\begin{figure}
		\centering
		\begin{tikzpicture}[scale=0.5]
			\foreach \x in {1,...,5}
				{\fill[black] (\x,3) circle (4pt);
				\fill[black] (\x,0) circle (4pt);}
				\draw (1,3) arc (-180:0:1) -- (3,0) arc (180:0:0.5);
				\draw (2,3)--(1,0);
				\draw (2,0) arc (180:0:1.5) -- (5,3);
		\end{tikzpicture}\hspace{1cm}
		\begin{tikzpicture}[scale=0.5]
			\foreach \x in {1,...,5}
				{\fill[black] (\x,3) circle (4pt);
				\fill[black] (\x,0) circle (4pt);}
				\draw (1,3) arc (-180:0:1) -- (4,0) arc (0:180:0.5 and 0.25);
				\draw (2,3)--(1,0);
				\draw (2,0) .. controls (3,1) and (4,1) .. (5,0) -- (5,3) -- (2,0);
		\end{tikzpicture}\hspace{1cm}
		\begin{tikzpicture}[scale=0.5]
			\foreach \x in {1,...,5}
				{\fill[black] (\x,3) circle (4pt);
				\fill[black] (\x,0) circle (4pt);}
				\draw (3,3) .. controls (2.5,2) and (1.5,2) .. (1,3) -- (3,0) arc (180:0:0.5 and 0.25);
				\draw (2,3)--(1,0);
				\draw (5,0) .. controls (4,1) and (3,1) .. (2,0) -- (5,3);
		\end{tikzpicture}
		\caption{Three diagrams  representing the  set-partition $\big\{\{1,3,\bar3,\bar4\},\{2,\bar1\},\{4\},\{5,\bar2,\bar5\}\big\}$}\label{fig:threediagrams}
	\end{figure}

 Multiplication in the partition algebra is given by concatenation of diagrams in the following way: to obtain the result $x\cdot y$ given diagrams $x$ and $y$, place $x$ on top of $y$ and identify the southern  nodes of $x$ with the northern nodes of $y$. This new diagram may contain a number, $t$ say, of blocks in the centre not connected to the northern or southern edges of the diagram. These we remove and multiply the final result by $\delta^t$. An example is given in Figure \ref{fig:partmult}.

\begin{figure}
	\centering
	\begin{tikzpicture}[scale=0.5]
	 \foreach \x in {1,...,5,7,8,...,11,14,15,...,18}
	 	{\fill[black] (\x,0) circle (4pt);
	 	\fill[black] (\x,3) circle (4pt);}
	 	
	\draw (3,3) arc (0:-180:0.5 and 0.25) -- (3,0);\draw (4,3)--(1,0);\draw (5,3)--(5,0);
	\draw (6,1.5) node {$\times$};
	\draw (9,3) .. controls (8.5,2) and (7.5,2) .. (7,3) -- (9,0) arc (180:0:0.5 and 0.25);
	\draw (8,3)--(7,0);
	\draw (11,0) .. controls (10,1) and (9,1) .. (8,0) -- (11,3);
	\draw (12,1.5) node {$=$};\draw (13,1.5) node {$\delta$};
	\draw (18,0) .. controls (17,1) and (16,1) .. (15,0) -- (18,3);
	\draw (17,3) arc (0:-180:0.5 and 0.25) arc (0:-180:0.5 and 0.25) -- (16,0) arc (180:0:0.5 and 0.25);
	\end{tikzpicture}
	\caption{Multiplication of two diagrams in $P_5^R(\delta)$.}\label{fig:partmult}
\end{figure}
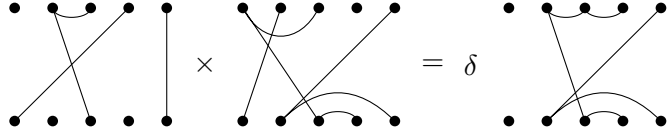

It is easy to see that the elements $s_{i,j}$, $p_{i,j}$ ($1\leq i<j\leq n$) and $p_i$ ($1\leq i\leq n$) defined in Figure \ref{fig:generators} generate $P_n^\bbf(\delta)$ (see \cite[Theorem 1.11]{halverson2005partition}).
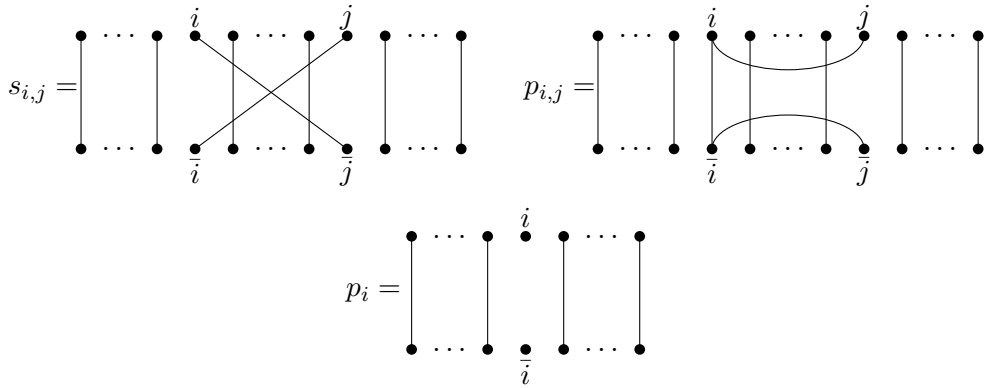
\begin{figure}
\centering
\begin{tikzpicture}[scale=0.5]
	\draw (-1,1.5) node {$s_{i,j}=$};
	\foreach \x in {0,2,3,4, 5.5+.5,6.5+.5,8,10}
	{\fill[black] (\x,0) circle (4pt);
	\fill[black] (\x,3) circle (4pt);}
	\foreach \x in {0,2,4, 5.5+.5,7.5+.5,10}
	{\draw (\x,3)--(\x,0);}
	\draw (3,3)--(7,0);\draw (7,3)--(3,0);
	\draw (3,3.5) node {$\tiny i$};\draw (7,3.5) node {$j$};
	\draw (3,-0.6) node {$\bar i$};\draw (7,-0.6) node {$\bar j$};
\draw (1,3) node {$\ldots$};\draw (1,0) node {$\ldots$};
\draw (5,3) node {$\ldots$};\draw (5,0) node {$\ldots$};
\draw (9,3) node {$\ldots$};\draw (9,0) node {$\ldots$};
\end{tikzpicture}\hspace{0.5cm}
\begin{tikzpicture}[scale=0.5]
	\draw (-1,1.5) node {$p_{i,j}=$};
	\foreach \x in {0,2,3,4, 5.5+.5,6.5+.5,8,10}
	{\fill[black] (\x,0) circle (4pt);
	\fill[black] (\x,3) circle (4pt);}
	\foreach \x in {0,2,4, 5.5+.5,7.5+.5,10}
	{\draw (\x,3)--(\x,0);}
	\draw (3,3)--(3,0);   \draw (3,3) arc (0:180:-2 and -0.9) ;  \draw (3,0) arc (0:180:-2 and 0.9) ;
	\draw (3,3.5) node {$\tiny i$};\draw (7,3.5) node {$j$};
	\draw (3,-0.6) node {$\bar i$};\draw (7,-0.6) node {$\bar j$};
\draw (1,3) node {$\ldots$};\draw (1,0) node {$\ldots$};
\draw (5,3) node {$\ldots$};\draw (5,0) node {$\ldots$};
\draw (9,3) node {$\ldots$};\draw (9,0) node {$\ldots$};
\end{tikzpicture}
 \hspace{0.5cm}
\begin{tikzpicture}[scale=0.5]
	\draw (-1,1.5) node {$p_{i}=$};
	\foreach \x in {0,2,3,4,6}
	{\fill[black] (\x,0) circle (4pt);
	\fill[black] (\x,3) circle (4pt);}
	\foreach \x in {0,2,4,6}
	{\draw (\x,3)--(\x,0);}
	\draw (3,3.5) node {$i$};
	\draw (3,-0.6) node {$\bar i$};
\draw (1,0) node {$\ldots$};\draw (1,3) node {$\ldots$};
\draw (5,0) node {$\ldots$};\draw (5,3) node {$\ldots$};
\end{tikzpicture}
\caption{Generators of the partition algebra}\label{fig:generators}
\end{figure}
We write $s_i:=s_{i,i+1}$ and $p_{i+\half}:=p_{i,n}$ for $1\leq i\leq n-1$.

Notice that multiplication in $P_n^\bbf(\delta)$ cannot increase the number of propagating blocks. We therefore have a filtration of $P_n^\bbf(\delta)$ by the number of propagating blocks.

In what follows we will assume that $\delta\in \bbf$ is non-zero. This allows us to realise the filtration by use of the idempotents $e_t=\frac{1}{\delta^t}p_1p_2\ldots p_t$ (for $1\leq t\leq n$). 

So we have
\begin{equation}\label{eq:partfiltration}
		J_n^{(0)}\subset J_n^{(1)}\subset \dots \subset J_n^{(n-1)}\subset J_n^{(n)}=P_n^\bbf(\delta)
	\end{equation}
where $J_n^{(t)}=P_n^\bbf(\delta)e_{n-t}P_n^\bbf(\delta)$ is spanned by all diagrams with at most $t$ propogating blocks.

	 We also use $e_t$ to construct algebra isomorphisms
	\begin{equation}\label{eq:partiso}
		\Phi_{n,t}:P_{n-t}^\bbf(\delta)\longrightarrow e_tP_n^\bbf(\delta)e_t
	\end{equation}
	taking a diagram in $P_{n-t}^\bbf(\delta)$ and adding $t$ extra northern and southern nodes to the lefthand end.
Using these isomorphisms, and following \cite[Section 6.2]{green1980polynomial}, we obtain exact localisation functors
	\begin{eqnarray}
		F_{n,n-t}:P_n^\bbf(\delta)\text{-\catmod}	&\longrightarrow&	P_{n-t}^\bbf(\delta)\text{-\catmod}\label{eq:partfn}\\
					 			 M 	&\longmapsto&    	e_tM, \nonumber
	\end{eqnarray}
and  right exact globalisation functors
	\begin{eqnarray}
		G_{n-t,n}:P_{n-t}^\bbf(\delta)\text{-\catmod}	&\longrightarrow&	P_{n}^\bbf(\delta)\text{-\catmod}\label{eq:partgn}\\
					  M 				&\longmapsto&	P_{n}^\bbf(\delta)e_{t}\otimes_{P_{n-t}^\bbf(\delta)}M. \nonumber
	\end{eqnarray}
Since $F_{n,n-t}G_{n-t,n}(M)\cong M$ for all $M\in P_{n-t}^\bbf(\delta)$-\catmod, $G_{n-t}$ is a full embedding of categories. From the filtration \eqref{eq:partfiltration} we see that
	\begin{equation}\label{eq:partlift}
		P_n^\bbf(\delta)/J_n^{(n-1)}\cong\bbf \sym_n.
	\end{equation}
Thus using \eqref{eq:partiso} with $t=1$, we obtain by induction that the simple $P_n^\bbf(\delta)$-modules are indexed by the set $\Lambda_{\leq n}$ if $\mathbb{F}=K$ and by the set $\Lambda_{\leq n}^*$ if $\mathbb{F}=k$.

\subsection*{Cellular structure}

It was shown in \cite{xi1999partition} that the partition algebra $P_n^\bbf(\delta)$ is cellular. The cell modules $\Delta_\lambda^\bbf(n;\delta)$ are  indexed by the set of  partitions $\lambda\in\Lambda_{\leq n}$, and the partial order is given by the dominance order (with degree)  $\preceq$ defined in Section 2. When $\lambda\vdash n$, we obtain $\Delta_\lambda^\bbf(n;\delta)$ by lifting the Specht module $S^\lambda_\bbf$ to the partition algebra using \eqref{eq:partlift}. When $\lambda\vdash n-t$ for some $t>0$, we obtain the cell module by
\begin{equation}\label{partcell}
\Delta_\lambda^\bbf(n;\delta)= G_{n-t,n}(S_\bbf^\lambda) = P_n^\bbf(\delta) e_t \otimes_{P_{n-t}^\bbf (\delta)} S^\lambda_\bbf.
\end{equation}
Over $K$, each of the cell modules has a simple head $L_\lambda^K(n;\delta)$, and these form a complete set of non-isomorphic simple $P_n^K(\delta)$-modules. Over $k$, the heads $L_\lambda^k(n;\delta)$ of cell modules labelled by $p$-regular partitions $\lambda\in\Lambda_{\leq n}^\ast$ provide a complete set of non-isomorphic simple $P_n^k(\delta)$-modules. 

\noindent From the cellular structure we have that $[\Delta_\lambda^\bbf(n,\delta): L_\mu^\bbf(n,\delta)]\neq 0$ implies $\mu \succeq \lambda$.

	When the context is clear, we will write $\Delta_\lambda^\bbf(n)$ and $L_\lambda^\bbf(n)$ to mean $\Delta_\lambda^\bbf(n;\delta)$ and $L_\lambda^\bbf(n;\delta)$ respectively. By definition we have that the localisation and globalisation functors preserve the cell modules. More precisely, for $\lambda\in \Lambda_{\leq n}$, we have
\begin{align*}
	F_{n,n-t}(\Delta_\lambda^\bbf(n))&\cong\begin{cases}
									\Delta_\lambda^\bbf(n-t)&\text{ if }\lambda\in\Lambda_{\leq n-t}\\
									0&\text{ otherwise,}
									\end{cases}\\
	G_{n,n+t}(\Delta_\lambda^\bbf(n))&\cong\Delta_\lambda^\bbf(n+t).
\end{align*}		

We also have an explicit construction of the cell modules, which follows directly from the definition given in (\ref{partcell}). Let $I(n,n-t)$ be the set of partition diagrams with precisely $n-t$ propagating blocks and $\overline{1},\overline{2},\dots,\overline{t}$ each in singleton blocks. Then denote by $V^\bbf(n,n-t)$ the $\bbf$-space with basis $I(n,n-t)$. 
For a partition $\lambda\vdash n-t$ we can easily show that $\Delta_\lambda^\bbf(n)\cong V^\bbf(n,n-t)\otimes_{\sym_{n-t}}S_\bbf^\lambda$, where $S_\bbf^\lambda$ is the Specht module and the right action of $\sym_{n-t}$ on $V^\bbf(n,n-t)$ is by permutation of the $n-t$ rightmost southern nodes. The action of $P_n^\bbf(\delta)$ on $\Delta_\lambda^\bbf(n)$ is as follows: given a partition diagram $x\in P_n^\bbf(\delta)$, $v\in I(n,n-t)$ and $s\in S_\bbf^\lambda$, we define the element
	\[x(v\otimes s)=(xv)\otimes s\]
where $(xv)$ is the product of the partition diagrams if the concatenation of $x$ and $v$ has $n-t$ propagating blocks, and is 0 otherwise. (Note that when $\lambda \vdash n$ we have $\Delta_\lambda^\bbf(n)=S_\lambda^\bbf$).

Now if we take $\delta\in R$ and $V^R(n,n-t)$ to be the free $R$-module with basis $I(n,n-t)$ then we can define an $R$-form for the cell modules $\Delta_\lambda^R(n;\delta):=V^R(n,n-t)\otimes_{\sym_{n-t}}S^\lambda_R$, and we have
\[\Delta_\lambda^K(n;\delta)=K\otimes_R\Delta_\lambda^R(n;\delta)~~~~~ \text{and}~~~~~\Delta_\lambda^k(n;\delta)=k\otimes_R\Delta_\lambda^R(n;\delta),\]
where we denote by $\delta$ both its embedding in $K$ and its projection onto $k$.

\medskip

\subsection*{The half partition algebras}

We will also need to consider the algebra $P_{n-\frac{1}{2}}^\bbf(\delta)$, which is the subalgebra of $P_n^\bbf(\delta)$ spanned by all set-partitions with $n$ and $\bar n$ in the same block. This algebra was introduced by P. Martin in \cite{martin2000partition}. The half partition algebra also has a filtration by the number of propagating blocks, and when $\delta\in\bbf$ is non-zero, this filtration can be realised as
	\begin{equation}\label{eq:partfiltrationhalf}
		J_{n-\frac{1}{2}}^{(1)}\subset J_{n-\frac{1}{2}}^{(2)}\subset \dots J_{n-\frac{1}{2}}^{(n-1)}\subset J_{n-\frac{1}{2}}^{(n)}=P_{n-\frac{1}{2}}^\bbf(\delta)
	\end{equation}
where $J_{n-\frac{1}{2}}^{(t)}=P_{n-\half}^\bbf(\delta) e_{n-t}P_{n-\half}^\bbf(\delta)$. Note that since we require the nodes $n$ and $\bar n$ to be in the same block, we always have at least one propagating block. 
For the same reason, we see that 
\begin{equation}\label{quotienthalf}
P_{n-\frac{1}{2}}^\bbf(\delta)/J_{n-\frac{1}{2}}^{(n-1)}\cong\bbf\sym_{n-1}.
\end{equation}
Thus we have that the simple $P_{n-\frac{1}{2}}^\bbf(\delta)$-modules are indexed by $\Lambda_{\leq n-1}$ if $\bbf = K$ and by  $\Lambda_{\leq n-1}^\ast$ if $\bbf=k$.

It is also easy to see that $e_tP_{n-\half}^\bbf(\delta)e_t\cong P_{n-t-\half}^\bbf(\delta)$, and so we can define localisation functors $F_{n-\half, n-t-\half}$ and globalisation functors $G_{n-t-\half, n-\half}$ as for the partition algebras.

The algebra $P_{n-\frac{1}{2}}^\bbf(\delta)$ is also cellular \cite{martin2000partition}. We can construct the cell modules in a similar way. Let $I(n-\frac{1}{2},n-t)$ be the set of partition diagrams with precisely $n-t$ propagating blocks, one of which containing $n$ and $\bar n$, and with $\overline{1},\overline{2},\dots,\overline{t}$ each in singleton blocks. Then denote by $V^\bbf(n-\frac{1}{2},n-t)$ the $\mathbb{F}$-module with basis $I(n-\frac{1}{2},n-t)$.
For a partition $\lambda\vdash n-t-1$ we can define $\Delta_\lambda^\bbf(n-\frac{1}{2};\delta)\cong V^\bbf(n-\frac{1}{2},n-t)\otimes_{\sym_{n-t-1}}S_\bbf^\lambda$, where $S_\bbf^\lambda$ is a Specht module and the right action of $\sym_{n-t-1}$ on $V(n-\half,n-t)$ is by permuting the $n-t-1$ southern nodes $\overline{t+1}, \ldots, \overline{n-1}$. The left  action of $P_{n-\frac{1}{2}}^\bbf(\delta)$ is the same as in the partition algebra case.

\subsection*{Induction and restriction}
By definition we have an inclusion $P_{n-\half}^\mathbb{F}(\delta)\subset P_{n}^\mathbb{F}(\delta)$. Note that 
we also have an injective algebra homomorphism 	
\begin{equation}\label{incl}	
		\iota \, : \, P_n^\bbf(\delta)\longrightarrow P_{n+\frac{1}{2}}^\bbf(\delta)\, : \,
		d\longmapsto d\cup \big\{\{n+1,\overline{n+1}\}\big\}.
\end{equation}	
This allows us to define the following restriction and induction functors.
\begin{eqnarray*}
&& \mathrm{res}_n  : P_n^\bbf(\delta)\text{-\catmod}\longrightarrow P^\bbf_{n-\frac{1}{2}}(\delta)\text{-\catmod}\, : \,
				M\longmapsto M|_{P_{n-\half}^\bbf(\delta)}.\\
&& \mathrm{ind}_{n-\half}:P^\bbf_{n-\half}(\delta)\text{-\catmod} \longrightarrow P^\bbf_{n}(\delta)\text{-\catmod}\, : \, M \longmapsto P_{n}^\bbf(\delta)\otimes_{P_{n-\tiny{\half}}^\bbf(\delta)}M.\\
&& \mathrm{res}_{n+\half}  : P_{n+\half}^\bbf(\delta)\text{-\catmod}\longrightarrow P^\bbf_{n}(\delta)\text{-\catmod}\, : \,
				M\longmapsto M|_{P_{n}^\bbf(\delta)}.\\
&& \mathrm{ind}_{n}:P^\bbf_{n}(\delta)\text{-\catmod} \longrightarrow P^\bbf_{n+\half}(\delta)\text{-\catmod}\, : \, M \longmapsto P_{n+\half}^\bbf(\delta)\otimes_{P_{n}^\bbf(\delta)}M.
\end{eqnarray*}

\noindent In \cite{martin2000partition} P. Martin gives branching rules for the cell modules under the above restriction functors (see also \cite[Proposition 3.4]{enyang2013seminormal}). We now recall this result. We will use the notation $M\cong \biguplus_{0\leq i \leq s} N_i$ to denote a module $M$ with a filtration $0=M_{s+1}\subset M_{s}\subset \ldots \subset M_1 \subset M_0=M$ such that $M_i/M_{i+1}\cong N_i$ for all $0\leq i\leq s$.

\begin{thm} \cite[Proposition 7]{martin2000partition}
Let $\lambda\in \Lambda_{\leq n}$. If $|\lambda|=n$ then we have $\mathrm{res}_{n}\Delta_\lambda^\bbf(n) \cong \biguplus_{\mu\triangleleft\lambda}\Delta_\mu^\bbf(n-\textstyle{\half})$. If $|\lambda|\leq n-1$ then we have an exact sequence
\begin{equation}\label{eq:resn}
0\longrightarrow\biguplus_{\mu\triangleleft\lambda}\Delta_\mu^\bbf(n-\textstyle{\half})\longrightarrow \mathrm{res}_{n}\Delta_\lambda^\bbf(n) \longrightarrow\Delta_\lambda^\bbf(n-\half) \longrightarrow0.
\end{equation}
Let $\lambda\in \Lambda_{\leq n-1}$ then we have an exact sequence
\begin{equation}\label{eq:indhalf}
0\longrightarrow\Delta_\lambda^\bbf(n)\longrightarrow \mathrm{ind}_{n-{\half}}\Delta_\lambda^\bbf(n-\textstyle\half)\longrightarrow\displaystyle\biguplus_{\mu\triangleright\lambda}\Delta_\mu^\bbf(n) \longrightarrow0.
\end{equation}
Let $\lambda\in \Lambda_{\leq n}$. If $|\lambda|=n$ then we have $\mathrm{res}_{n+{\half}}\Delta_\lambda^\bbf(n+\textstyle{\half}) \cong \Delta_\lambda^\bbf(n)$. If $|\lambda|\leq n-1$ then we have an exact sequence
\begin{equation}\label{eq:reshalf}
0\longrightarrow\Delta_\lambda^\bbf(n)\longrightarrow \mathrm{res}_{n+{\half}}\Delta_\lambda^\bbf(n+\textstyle{\half}) \longrightarrow\displaystyle\biguplus_{\mu\triangleright\lambda}\Delta_\mu^\bbf(n) \longrightarrow0.
\end{equation}
Let $\lambda\in \Lambda_{\leq n}$ then we have an exact sequence
\begin{equation}\label{eq:indn}
0\longrightarrow\biguplus_{\mu\triangleleft\lambda}\Delta_\mu^\bbf(n+\textstyle{\half})\longrightarrow \mathrm{ind}_n\Delta_\lambda^\bbf(n)\longrightarrow\Delta_\lambda^\bbf(n+\half) \longrightarrow 0.
\end{equation}

Moreover, in the exact sequences (\ref{eq:resn})-(\ref{eq:indn}) above, the filtrations by cell modules  can be chosen so that the cell modules appear in dominance order, with the most dominant factor appearing at the top of the filtration.

\end{thm}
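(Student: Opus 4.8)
The plan is to work throughout with the explicit diagram realisations $\Delta_\lambda^\bbf(n)\cong V^\bbf(n,n-t)\otimes_{\sym_{n-t}}S_\bbf^\lambda$ (for $\lambda\vdash n-t$) and $\Delta_\lambda^\bbf(n-\half)\cong V^\bbf(n-\half,n-t)\otimes_{\sym_{n-t-1}}S_\bbf^\lambda$ recorded above, and to deduce all four sequences from bimodule computations together with the classical branching rules $\mathrm{res}^{\sym_{m}}_{\sym_{m-1}}S^\nu\cong\biguplus_{\kappa\triangleleft\nu}S^\kappa$ and $\mathrm{ind}^{\sym_{m+1}}_{\sym_{m}}S^\nu\cong\biguplus_{\kappa\triangleright\nu}S^\kappa$ for the symmetric groups. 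I would establish the restriction sequences \eqref{eq:resn} and \eqref{eq:reshalf} first by a direct analysis of the diagram basis, and then treat the induction sequences \eqref{eq:indhalf} and \eqref{eq:indn} in parallel.

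For \eqref{eq:resn}, fix $\lambda\vdash n-t$ and view $V^\bbf(n,n-t)$ as a $(P_{n-\half}^\bbf(\delta),\sym_{n-t})$-bimodule. Since every diagram of $P_{n-\half}^\bbf(\delta)$ satisfies $n\sim\bar n$, left multiplication preserves the propagating status of the northern node $n$; hence the span $V_{\mathrm{pr}}$ of the basis diagrams in which the northern node $n$ lies in a propagating block is a left $P_{n-\half}^\bbf(\delta)$-submodule, while the complementary span $V_{\mathrm{npr}}$ (northern node $n$ non-propagating) is only a quotient. Both are stable under the right $\sym_{n-t}$-action, so $0\to V_{\mathrm{pr}}\to V^\bbf(n,n-t)\to V_{\mathrm{npr}}\to 0$ is a short exact sequence of bimodules that splits as right $\sym_{n-t}$-modules; consequently $-\otimes_{\sym_{n-t}}S_\bbf^\lambda$ keeps it exact and yields a short exact sequence of $P_{n-\half}^\bbf(\delta)$-modules. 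A Mackey-type coset argument identifies $V_{\mathrm{pr}}\cong V^\bbf(n-\half,n-t)\otimes_{\sym_{n-t-1}}\sym_{n-t}$ (a diagram with $n$ propagating is a unique $\sym_{n-t}$-translate of one with $n\sim\bar n$), so the submodule becomes $V^\bbf(n-\half,n-t)\otimes_{\sym_{n-t-1}}\mathrm{res}^{\sym_{n-t}}_{\sym_{n-t-1}}S_\bbf^\lambda\cong\biguplus_{\mu\triangleleft\lambda}\Delta_\mu^\bbf(n-\half)$ by the symmetric-group branching rule. The case $|\lambda|=n$ is precisely this submodule with empty quotient, which recovers the first assertion.

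The remaining point for \eqref{eq:resn} is to identify the quotient $V_{\mathrm{npr}}\otimes_{\sym_{n-t}}S_\bbf^\lambda$ with $\Delta_\lambda^\bbf(n-\half)$, and I expect this to be the main obstacle. One needs a bijection between the partition diagrams in which the northern node $n$ is non-propagating and the set $I(n-\half,n-t+1)$, realised by tying the (previously non-propagating) node $n$ to $\bar n$ and cyclically relabelling the propagating southern nodes $\bar{t+1},\dots,\bar n$ as $\bar t,\dots,\bar{n-1}$; note that the underlying full diagrams then carry different propagating counts, so the content of the argument is to check that this relabelling is $(P_{n-\half}^\bbf(\delta),\sym_{n-t})$-equivariant. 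The counting identity $|V_{\mathrm{npr}}|=|I(n-\half,n-t+1)|$ serves as a consistency check and, once the map is shown equivariant, forces the isomorphism. Sequence \eqref{eq:reshalf} is proved by the identical argument after shifting every index by a half.

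For the induction sequences I would run the same propagating/non-propagating dichotomy on $P_n^\bbf(\delta)$ regarded as a right $P_{n-\half}^\bbf(\delta)$-module (now classifying diagrams by the southern node $n$), so that $\mathrm{ind}_{n-\half}\Delta_\lambda^\bbf(n-\half)=P_n^\bbf(\delta)\otimes_{P_{n-\half}^\bbf(\delta)}\Delta_\lambda^\bbf(n-\half)$ acquires a submodule $\Delta_\lambda^\bbf(n)$ and quotient $\biguplus_{\mu\triangleright\lambda}\Delta_\mu^\bbf(n)$, the latter produced by $\mathrm{ind}^{\sym_{m+1}}_{\sym_m}$ of the relevant Specht module; \eqref{eq:indn} is handled likewise. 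As a cross-check on the cell content, $\mathrm{ind}_{n-\half}$ is left adjoint to $\mathrm{res}_n$, so Frobenius reciprocity combined with the triangularity of $\Hom$ between cell modules fixes the multiplicities already read off from \eqref{eq:resn}. Finally, the ordering of all four filtrations is automatic: in each sequence only two degrees occur, differing by one, the factors of lower degree forming the submodule and those of higher degree the top quotient; since lower degree gives $\prec$, the most dominant factor sits on top, and within a branched part (factors of equal degree) the dominance order is inherited from the order-compatible Specht filtration of $\mathrm{res}^{\sym_{n-t}}_{\sym_{n-t-1}}S_\bbf^\lambda$.
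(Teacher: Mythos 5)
Your outline is sound, but it takes a genuinely different route from the paper. The paper does not prove the two restriction sequences at all: it simply cites \cite[Proposition 3.4]{enyang2013seminormal} for \eqref{eq:resn} and \eqref{eq:reshalf}, and then obtains the two induction sequences for free from the functor isomorphisms $\mathrm{ind}_{n-\half}\cong\mathrm{res}_{n+\half}\circ G_{n-\half,n+\half}$ and $\mathrm{ind}_n\cong\mathrm{res}_{n+1}\circ G_{n,n+1}$, which in turn follow from the bimodule isomorphisms $P_{n+\half}^\bbf(\delta)e_1\cong P_n^\bbf(\delta)$ and $P_{n+1}^\bbf(\delta)e_1\cong P_{n+\half}^\bbf(\delta)$; since $G$ sends $\Delta_\lambda^\bbf(n-\half)$ to $\Delta_\lambda^\bbf(n+\half)$, the induction rules become instances of the already-known restriction rules. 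What you do instead is reprove the restriction rules from scratch via the propagating/non-propagating dichotomy on the diagram basis of $V^\bbf(n,n-t)$ -- this is essentially the content of the cited reference, and your identification of the submodule (northern node $n$ propagating) versus quotient is the correct orientation -- and then propose a parallel direct computation for induction. The one place where your route costs real extra work is the left-exactness of \eqref{eq:indhalf} and \eqref{eq:indn}: $\mathrm{ind}$ is only right exact, so the injectivity of $\Delta_\lambda^\bbf(n)\to\mathrm{ind}_{n-\half}\Delta_\lambda^\bbf(n-\half)$ does not come from formal nonsense and requires an explicit splitting of $P_n^\bbf(\delta)$ as a right $P_{n-\half}^\bbf(\delta)$-module (or an explicit basis of the induced module); your adjunction ``cross-check'' only controls multiplicities of heads, not exactness. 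The paper's reduction to restriction-after-globalisation sidesteps this entirely, which is what that approach buys; yours buys self-containedness, at the price of redoing the diagram combinatorics (including the relabelling equivariance you rightly flag as the main obstacle) twice.
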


\begin{proof}
The proofs for the branching rules for the restriction functors (\ref{eq:resn}) and (\ref{eq:reshalf}) can be found for example in \cite[Proposition 3.4]{enyang2013seminormal}. The branching rule for the induction functor $\ind_{n-\half}$ given in (\ref{eq:indhalf}) follows directly from the fact that as functors we have $\mathrm{ind}_{n-\half} \cong \mathrm{res}_{n+\half}G_{n-\half ,n+\half}$.  To see this, simply observe that as $(P_{n}^\bbf(\delta), P_{n-\half}^\bbf(\delta))$-bimodules we have  $P_{n+\half}^\bbf(\delta) e_1 \cong P_{n}^\bbf(\delta)$. Similarly, the branching rule for the induction functor $\ind_{n}$ given in (\ref{eq:indn}) follows directly from the fact that as functors we have $\mathrm{ind}_n \cong \mathrm{res}_{n+1}G_{n,n+1}$. To see this, simply observe that as $(P_{n+\half}^\bbf(\delta), P_n^\bbf(\delta))$-bimodules we have $P_{n+1}^\bbf(\delta) e_1 \cong P_{n+\half}^\bbf(\delta)$.
\end{proof}

\subsection*{A Morita equivalence}

 P. Martin proved the existence of a  Morita equivalence between $P_{n+\half}^\bbf(\delta)$ and $P_n^\bbf(\delta -1)$ when $\bbf = \mathbb{C}$. In fact, this equivalence holds over any field $\bbf$.
\begin{thm}\cite[Section 3]{martin2000partition}\label{prop:morita}
	Define the idempotent
		\[\xi_{n+1}=\prod_{i=1}^n(1-p_{i,n+1})\in P_{n+1}^\bbf(\delta).\]
(i) We have an algebra isomorphism
		\[\xi_{n+1}P_{n+\frac{1}{2}}^\bbf(\delta)\xi_{n+1}\cong P_n^\bbf(\delta-1).\]
(ii) The isomorphism given in (i) induces a Morita equivalence between the categories $P_{n+\frac{1}{2}}^\bbf(\delta)\text{-\catmod}$ and $P_n^\bbf(\delta-1)\text{-\catmod}$. More precisely, using the isomorphism given in (i), the functors
	\begin{align*}
		\Phi:P_{n+\frac{1}{2}}^\bbf(\delta)\text{-\catmod}&\longrightarrow P_n^\bbf(\delta-1)\text{-\catmod}\\
												M&\longmapsto\xi_{n+1}P_{n+\frac{1}{2}}^\mathbb{F}(\delta)\otimes_{P_{n+\frac{1}{2}}^\mathbb{F}(\delta)} M\\
		\text{and~~~~~}\Psi:P_n^\bbf(\delta-1)\text{-\catmod}&\longrightarrow P_{n+\frac{1}{2}}^\bbf(\delta)\text{-\catmod}\\
												N&\longmapsto P_{n+\textstyle\half}^\bbf(\delta)\xi_{n+1}\otimes_{P_n^\bbf(\delta-1)}N
	\end{align*}
	define an equivalence of categories. 

\noindent (iii) Cell modules are preserved under the equivalence given in (ii). More precisely, we have
		\[\Phi(\Delta_\lambda^\bbf(n+\textstyle\half,\delta))\cong\Delta_\lambda^\bbf(n,\delta -1)\]
	for all $\lambda\in\Lambda_{\leq n}$.
\end{thm}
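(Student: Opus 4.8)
The plan is to prove all three parts by diagram calculus, the one genuinely delicate point being the appearance of the shifted parameter $\delta-1$, which I would isolate in a loop-counting argument and then reuse for part (iii). For (i), I would first record that the $n$ elements $p_{1,n+1},\dots,p_{n,n+1}$ are pairwise commuting idempotents of $P_{n+\half}^\bbf(\delta)$: concatenating $p_{i,n+1}$ with itself merges the block $\{i,n+1,\bar i,\overline{n+1}\}$ with its mirror image without creating any closed part, so $p_{i,n+1}^2=p_{i,n+1}$, and a similar concatenation shows $p_{i,n+1}p_{j,n+1}=p_{j,n+1}p_{i,n+1}$ (both equal the diagram with single block $\{i,j,n+1,\bar i,\bar j,\overline{n+1}\}$). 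Hence each $1-p_{i,n+1}$ is idempotent and the factors commute, so $\xi_{n+1}=\prod_{i=1}^n(1-p_{i,n+1})$ is an idempotent of $P_{n+\half}^\bbf(\delta)$. Next I describe the corner algebra diagrammatically: since $(1-p_{i,n+1})p_{i,n+1}=0$, left multiplication by $\xi_{n+1}$ annihilates every diagram in which the northern node $n+1$ lies in a block meeting some northern node $i\le n$, and right multiplication does the same on the southern side; as $n+1$ and $\overline{n+1}$ lie in a common block, the surviving diagrams in $\xi_{n+1}P_{n+\half}^\bbf(\delta)\xi_{n+1}$ are exactly those in which $\{n+1,\overline{n+1}\}$ is an isolated propagating strand. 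Deleting that strand gives a bijection between this spanning set and the set of all $n$-partition diagrams, which I would use to define the candidate map $\theta\colon P_n^\bbf(\delta-1)\to \xi_{n+1}P_{n+\half}^\bbf(\delta)\xi_{n+1}$, $d\mapsto \xi_{n+1}\iota(d)\xi_{n+1}$, where $\iota$ adjoins the strand $\{n+1,\overline{n+1}\}$.

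The heart of the matter, and the step I expect to be the main obstacle, is checking that $\theta$ is multiplicative with the loop parameter changing from $\delta$ to $\delta-1$. Writing $\theta(d_1)\theta(d_2)=\xi_{n+1}\iota(d_1)\xi_{n+1}\iota(d_2)\xi_{n+1}$ and expanding the middle copy as $\xi_{n+1}=\sum_{S\subseteq\{1,\dots,n\}}(-1)^{|S|}\prod_{i\in S}p_{i,n+1}$, the term $S=\emptyset$ reproduces $\iota(d_1 d_2)$ with each closed loop of the concatenation $d_1 d_2$ weighted by $\delta$. I claim that for each such closed loop there is exactly one other surviving term, namely the one in which the middle node $n+1$ is spliced into that loop through a single generator $p_{i,n+1}$ with $i$ on the loop, and that after the outer idempotents restore $\{n+1,\overline{n+1}\}$ to an isolated strand this term contributes the same underlying $n$-diagram with coefficient $-1$; every remaining term is killed by the outer $\xi_{n+1}$ because it leaves $n+1$ joined to some $i\le n$. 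Thus each middle loop is counted with total weight $\delta-1$, giving $\theta(d_1)\theta(d_2)=\theta(d_1\cdot_{\delta-1}d_2)$. The careful bookkeeping of which terms survive and of the signs is the delicate part; once it is done, $\theta$ is a bijective algebra homomorphism, proving (i).

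For (ii) I would invoke the standard theory of a corner algebra $eAe$ with $A=P_{n+\half}^\bbf(\delta)$ and $e=\xi_{n+1}$. Using (i), one direction is immediate: $\Phi\Psi(N)\cong \xi_{n+1}P_{n+\half}^\bbf(\delta)\xi_{n+1}\otimes_{P_n^\bbf(\delta-1)}N\cong N$. The reverse isomorphism $\Psi\Phi\cong\mathrm{id}$ is equivalent to $e$ being a full idempotent, i.e. $AeA=A$. Rather than exhibit $1\in AeA$ by hand, I would argue by counting: the functor $M\mapsto eM$ induces a bijection between the simple $A$-modules not annihilated by $e$ and all simple $eAe$-modules, so the number of simple $eAe$-modules is at most the number of simple $A$-modules, with equality precisely when $e$ is full. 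By (i) together with the cellular description recalled in Section 4, both $eAe\cong P_n^\bbf(\delta-1)$ and $A=P_{(n+1)-\half}^\bbf(\delta)$ have their simple modules indexed by $\Lambda_{\le n}$ (by $\Lambda_{\le n}^\ast$ when $\bbf=k$); the two counts agree, forcing $e$ to be full. The standard Morita theorem for full idempotents then yields (ii) with the stated functors $\Phi$ and $\Psi$.

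Finally, for (iii) I would compute $\Phi$ on cell modules through the explicit model $\Delta_\lambda^\bbf(n+\half)\cong V^\bbf(n+\half,(n+1)-t)\otimes_{\sym_{n-t}}S_\bbf^\lambda$ for $\lambda\vdash n-t$. Since $\Phi(M)\cong \xi_{n+1}M$, the projection description from (i) shows that $\xi_{n+1}V^\bbf(n+\half,(n+1)-t)$ is spanned by those diagrams in which $\{n+1,\overline{n+1}\}$ is an isolated propagating strand; deleting that strand identifies this space with $V^\bbf(n,n-t)$ compatibly with the right $\sym_{n-t}$-action, so that $\xi_{n+1}\Delta_\lambda^\bbf(n+\half)\cong V^\bbf(n,n-t)\otimes_{\sym_{n-t}}S_\bbf^\lambda=\Delta_\lambda^\bbf(n)$. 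The same loop-counting as in (i) shows the residual $P_n^\bbf$-action is the one with loop parameter $\delta-1$, giving $\Phi(\Delta_\lambda^\bbf(n+\half,\delta))\cong\Delta_\lambda^\bbf(n,\delta-1)$ and completing the proof.
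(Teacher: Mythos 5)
Your overall architecture for (i) and (iii) matches the paper's (define $\theta(d)=\xi_{n+1}\iota(d)\xi_{n+1}$, use the annihilation properties of $\xi_{n+1}$ to get a vector-space bijection, then check compatibility with multiplication), and your route through (ii) via counting simple modules of the corner algebra differs from the paper's appeal to the multiplication map. However, the step you yourself single out as the heart of the matter --- the loop-counting that produces the parameter $\delta-1$ --- is wrong as stated. Expanding the middle idempotent as $\xi_{n+1}=\sum_{S\subseteq\{1,\dots,n\}}(-1)^{|S|}\prod_{i\in S}p_{i,n+1}$, it is not true that for each closed loop of $d_1d_2$ ``there is exactly one other surviving term'' nor that ``every remaining term is killed by the outer $\xi_{n+1}$''. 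The term indexed by $S$ survives the outer idempotents precisely when every connected component of the plain concatenation meeting $S$ is a closed (internal) loop; in particular, if a closed loop passes through $a\ge 2$ middle nodes, then \emph{every} nonempty subset $S$ of those $a$ nodes gives a surviving term with sign $(-1)^{|S|}$, not just the singletons. (Concretely, take $d_1=\{\{1\},\{2\},\{\bar1,\bar2\}\}$ and $d_2=\{\{1,2\},\{\bar1\},\{\bar2\}\}$ in $P_2^\bbf(\delta)$: the sets $S=\{1\},\{2\},\{1,2\}$ all survive, with signs $-1,-1,+1$.) The correct statement is that the signed sum over all nonempty subsets of the middle nodes of a given loop equals $-1$, and the contributions of distinct loops multiply, giving $\delta-1$ per loop by inclusion--exclusion; your accounting lands on the right answer only because you also undercount the singletons (there are $a$ of them, not one). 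The paper sidesteps all of this by verifying multiplicativity only on the generators $s_{i,j}$, $p_{i,j}$, $p_i$, where the relevant relation, e.g.\ $(\xi_{n+1}p_i\xi_{n+1})^2=(\delta-1)\xi_{n+1}p_i\xi_{n+1}$, involves a single loop with a single middle node. The same repair is needed where you invoke ``the same loop-counting'' in (iii); the paper instead checks the module-map property generator by generator, with a three-case analysis for $p_i$.

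A smaller caveat concerns (ii): your counting argument needs the simple $P_n^\bbf(\delta-1)$-modules to be indexed by $\Lambda_{\le n}$ (resp.\ $\Lambda_{\le n}^*$), but the paper establishes that indexing only when the parameter is nonzero, so your proof of fullness of $\xi_{n+1}$ is silent (indeed the counts fail to match) when $\delta=1$. The paper's own proof of (ii) --- asserting that multiplication gives an isomorphism $P_{n+\half}^\bbf(\delta)\xi_{n+1}\otimes_{P_n^\bbf(\delta-1)}\xi_{n+1}P_{n+\half}^\bbf(\delta)\cong P_{n+\half}^\bbf(\delta)$ --- also glosses over this boundary case, but you should flag that your argument does not cover it.
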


\begin{proof}
The proofs of (i) and (ii) are given in \cite[Section 3]{martin2000partition}. Although Martin works over the field of complex numbers throughout his paper, his proof of this result is in fact characteristic free. We briefly recall his arguments here. First note that as vector spaces we have $P_n^\mathbb{F}(\delta -1)=P_n^\mathbb{F}(\delta)$ so we can consider the inclusion of vector spaces $\iota : P_n^\mathbb{F}(\delta -1) \rightarrow P_{n+\frac{1}{2}}^\mathbb{F}(\delta)$ as defined in (\ref{incl}). Now define the map
\begin{equation}\label{algebraiso}
\theta \, : \, P_n^\mathbb{F}(\delta -1) \rightarrow \xi_{n+1}P_{n+\frac{1}{2}}^\mathbb{F}(\delta)\xi_{n+1}\, : \, x\mapsto \xi_{n+1} \iota(x) \xi_{n+1}.
\end{equation}
Note that  for any $(n+1)$-partition diagram we have $\xi_{n+1}x = 0$ (resp. $x\xi_{n+1}=0$) if $x$ has a block containing nodes $n+1$ (resp. $\overline{n+1}$) and $i$ (resp. $\overline{i}$) for some $1\leq i\leq n$. Note further that if $x$ is a diagram in $\iota(P_n^\mathbb{F}(\delta-1))$ then $\xi_{n+1}x$ (resp. $x\xi_{n+1}$) is equal to $x +\sum_y (\pm) y$  where the $y$'s are $(n+1)$-partition diagrams with a block containing nodes $n+1$ (resp. $\overline{n+1}$) and $i$ (resp. $\overline{i}$) for some $1\leq i\leq n$. Using this, one can easily show that the map $\theta$ is an isomophism of vector spaces. Now one can check that in fact, $\theta$ is an algebra isomorphism by checking the relations between the generators (see for example \cite{halverson2005partition}). For example one can check that $(\xi_{n+1}p_i \xi_{n+1})^2=(\delta -1)\xi_{n+1}p_i \xi_{n+1}$. 

For (ii) we only need that show that 
\begin{eqnarray*}
\xi_{n+1}P_{n+\frac{1}{2}}^\mathbb{F}(\delta) \otimes_{P_{n+\frac{1}{2}}^\mathbb{F}(\delta)} P_{n+\frac{1}{2}}^\mathbb{F}(\delta) \xi_{n+1}&\cong& P_n^\mathbb{F}(\delta),\,\, \mbox{and}\\   P_{n+\frac{1}{2}}^\mathbb{F}(\delta)\xi_{n+1} \otimes_{P_n^\mathbb{F}(\delta -1)} \xi_{n+1} P_{n+\frac{1}{2}}^\mathbb{F}(\delta) &\cong& P_{n+\frac{1}{2}}^\mathbb{F}(\delta).
\end{eqnarray*}
The former follows from (i). For the latter one can show that the multiplication map gives the required isomorphism.
 
The arguments for part (iii) are the same as the ones needed in the proof of (i). We give them here for completeness. Let $\lambda \vdash n-t$ for some $t\geq 0$. First note that for any partition diagram $v\in I(n+\half,n-t+1)$ we have $\xi_{n+1}v=0$ unless $\{n+1, \overline{n+1}\}$ is a block of $v$. This implies that the map
\begin{eqnarray*}
\tau\, : \, V^\bbf(n,n-t) &\longrightarrow& \xi_{n+1} V^\bbf(n+\half, n-t+1)\\
v &\mapsto& \xi_{n+1}\iota(v)
\end{eqnarray*}
is an isomorphism of vector spaces. We will show that, via the algebra isomorphism given in (\ref{algebraiso}), this map is in fact a left $P_n^\mathbb{F}(\delta-1)$-module isomorphism. As $\Delta_\lambda^\bbf(n,\delta - 1)=V^\bbf(n,n-t)\otimes_{\mathfrak{S}_{n-t}}S^{\lambda}_\bbf$ and $\Delta_\lambda^\bbf(n+\half, \delta)=V^\bbf(n+\half, n-t+1)\otimes_{\mathfrak{S}_{n-t}} S^\lambda_\bbf$, this will prove the claim. Thus we need to show that for any partition diagrams in $x\in P_n^\bbf(\delta -1)$ and $v\in  I(n,n-t)$ we have $\tau(xv)=\theta(x)\tau(v)$, that is
$$\xi_{n+1} \iota(xv) = \xi_{n+1}\iota(x)\xi_{n+1}\iota(v).$$
One should note that the product of diagrams $xv$ on the lefthand side takes place in $P_n^\bbf(\delta - 1)$ whereas all the other products are in $P_{n+\half}^\bbf(\delta)$. Clearly it is enough to prove this when $x$ runs over the set of generators $s_{i,j}$, $p_{i,j}$ ($1\leq i<j\leq n$) and $p_i$ ($1\leq i\leq n$) of $P_n^\bbf(\delta - 1)$. If $x=s_{i,j}$ or $p_{i,j}$ then we have that $\iota(x)\xi_{n+1}=\iota(x)+\sum_y (\pm) y$ with $\xi_{n+1}y=0$, so we get $\xi_{n+1}\iota(x)\xi_{n+1}=\xi_{n+1}\iota(x)$. Moreover, multiplication by $x$ does not involve the parameter in this case and so we have $\xi_{n+1}\iota(x)i(v)=\xi_{n+1}\iota(xv)$ as required. We are left with the case $x=p_i$.  Here we have $\xi_{n+1} \iota(p_i) \xi_{n+1} \iota(v)= \xi_{n+1} \iota(p_i) (1-p_{i,n+1})\iota(v) = \xi_{n+1}(\iota (p_i) - \iota(p_i) p_{i,n+1})\iota(v)$. We now consider three cases depending on the block of $v$ containing the node $i$. If $i$ is contained in a propagating block then $\iota(p_i)\iota(v)=\iota(p_iv)$ and $\iota(p_i)p_{i,n+1}\iota(v)=0$ as the concatenation contains fewer than $n-t$ propagating lines.  If $i$ is not contained in a propagating block and is not a singleton block then $\iota(p_i)\iota(v)=\iota(p_iv)$ and $\xi_{n+1}\iota(p_i)p_{i,n+1}\iota(v)=0$ as the node $n+1$ is joined to some other northern node in $\iota(p_i)p_{i,n+1}\iota(v)$. Finally, if the block containing $i$ is simply $\{i\}$ then we have $\iota(p_i)\iota(v)=\delta \iota(v)$ and $\iota(p_i)p_{i,n+1}\iota(v)=\iota(v)$. Therefore we get
$$\xi_{n+1}(\iota(p_i)-\iota(p_i)p_{i,n+1})\iota(v)=\xi_{n+1}(\delta - 1)\iota(v)=\xi_{n+1}\iota(p_i v)$$
as required.
\end{proof}

\section{Ordinary representation theory of the partition algebra}

In this section we recall the results due to P. Martin (see \cite{martin1996structure}) on the representation theory of the partition algebra over a field of characteristic zero, and then reinterpret these in a geometrical setting.

P. Martin showed that the partition algebra $P_n^K(\delta)$ is semisimple if and only if $\delta\notin \{0,1,2,\ldots , 2n-2\}$. In the semisimple case, the simple modules are given by the cell modules $\Delta_\lambda^K(n;\delta)$, and hence are very well-understood.
We will now describe the non-semisimple case. So we will assume that $\delta \in \mathbb{Z}$. We will also assume, as in the previous section, that $\delta \neq 0$. Recall that in this case, the cell-blocks coincide with the blocks of the partition algebra.

\begin{defn}\label{def:deltapair}
	Let $\lambda,\mu$ be partitions, with $\mu\subset\lambda$. We say that $(\mu,\lambda)$ is a \emph{$\delta$-pair}, written $\mu\hookrightarrow_\delta\lambda$, if $\lambda$ differs from $\mu$ by a strip of boxes in a single row, the last of which has content $\delta-|\mu|$.
\end{defn}

Note that if $\delta <0$ then  there are no $\delta$-pairs of partitions.

\begin{example}
We let $\delta=7$, $\lambda=(4,3,1)$ and $\mu=(4,1,1)$. Then we see that $\lambda$ and $\mu$ differ in precisely one row, and the last box in this row of $\lambda$ has content $1$ (see Figure \ref{fig:deltapair}). Since $\delta-|\mu|=7-6=1$, we see that $(\mu,\lambda)$ is a $7$-pair.
\end{example}

\begin{figure}
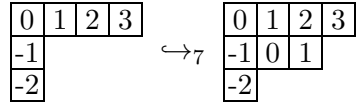

	\centering
	\newcommand{\mone}{\text{-1}}
	\newcommand{\mtwo}{\text{-2}}
	$\Yvcentermath1\young(0123,\mone,\mtwo)~~\hookrightarrow_7~~\young(0123,\mone01,\mtwo)$
	\caption{An example of a $\delta$-pair when $\delta=7$}\label{fig:deltapair}
\end{figure}
Using this definition, the blocks and the composition factors of the cell modules for the partition algebra $P_n^K(\delta)$ can be described as follows.

\begin{thm}[{\cite[Proposition 9]{martin1996structure}}]\label{thm:part0blocks}
Each block of the partition algebra $P^K_n(\delta)$ is given by a chain of partitions
	\[\lambda^{(0)}\subset\lambda^{(1)}\subset\dots\subset\lambda^{(r)}\]
where for each $i$, ($\lambda^{(i)}$,$\lambda^{(i+1)}$) form a $\delta$-pair, differing in the $(i+1)$-th row. Moreover there is an exact sequence of $P_n^K(\delta)$-modules
	\[0\rightarrow\Delta_{\lambda^{(r)}}^K(n)\rightarrow\Delta_{\lambda^{(r-1)}}^K(n)\rightarrow \dots\rightarrow\Delta_{\lambda^{(1)}}^K(n)\rightarrow\Delta_{\lambda^{(0)}}^K(n)\rightarrow L_{\lambda^{(0)}}^K(n)\rightarrow0\]
	with the image of each homomorphism a simple module. In particular, each of the cell modules $\Delta_{\lambda^{(i)}}^K(n)$ for $0\leq i<r$ has Loewy structure
	\begin{center}
		$L_{\lambda^{(i)}}^K(n)$\\
		$L_{\lambda^{(i+1)}}^K(n)$
	\end{center}
	and $\Delta_{\lambda^{(r)}}^K(n)=L_{\lambda^{(r)}}^K(n)$.
\end{thm}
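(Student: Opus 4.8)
The plan is to establish the theorem by induction on $n$, using the branching rules from Theorem~\ref{prop:morita}'s predecessor (the restriction/induction result) together with the Morita equivalence to descend to smaller algebras. First I would recall what must be proved: that the cell-blocks of $P_n^K(\delta)$ are precisely the chains of partitions linked by $\delta$-pairs, and that each non-maximal cell module $\Delta_{\lambda^{(i)}}^K(n)$ has the two-layer Loewy structure with $L_{\lambda^{(i)}}^K(n)$ on top and $L_{\lambda^{(i+1)}}^K(n)$ on the bottom, while the most dominant $\Delta_{\lambda^{(r)}}^K(n)$ is simple. Since we are in characteristic zero and $\delta \neq 0$, the cell-blocks coincide with the blocks, so it suffices to compute the decomposition numbers $d_{\lambda\mu}$ and verify the stated linkage.

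The heart of the argument is a careful analysis of homomorphisms between cell modules. I would first show that $\Hom(\Delta_\lambda^K(n), \Delta_\mu^K(n)) \neq 0$ forces $(\mu,\lambda)$ to be a $\delta$-pair, and conversely that a $\delta$-pair produces a nonzero (indeed injective) map. The natural tool is the branching rules in \eqref{eq:resn}--\eqref{eq:indn}: restricting to $P_{n-\frac{1}{2}}^K(\delta)$ and using the Morita equivalence of Theorem~\ref{prop:morita}, which identifies $P_{n+\frac{1}{2}}^K(\delta)\text{-\catmod}$ with $P_n^K(\delta-1)\text{-\catmod}$ and preserves cell modules, lets me relate the layers of a cell module for $P_n^K(\delta)$ to those for a smaller algebra with shifted parameter. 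The content condition $\delta - |\mu|$ in the definition of a $\delta$-pair should emerge precisely from tracking how the parameter $\delta$ interacts with the added strip of boxes under these functors; the key numerical input is that adding a node in row $i$ changes the relevant invariant by the content of that node, and the semisimplicity criterion $\delta \notin \{0,1,\dots,2n-2\}$ controls exactly when a cell module can fail to be simple.

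Having identified which pairs are linked, I would then pin down the \emph{multiplicities and the Loewy length}. The unitriangularity from Theorem~\ref{thmcelldec}(i) ($d_{\lambda\mu}=0$ unless $\lambda \preceq \mu$, with $d_{\lambda\lambda}=1$) combined with the observation that a $\delta$-pair $\mu \hookrightarrow_\delta \lambda$ has $\mu \subset \lambda$ hence $|\mu| < |\lambda|$, so $\mu \prec \lambda$, shows each cell module can have at most the composition factors indexed by partitions above it in the chain. The crucial step is to show that in fact \emph{only the immediately adjacent} partition in the chain contributes, giving exactly two composition factors for each non-maximal cell module; this uses a dimension or Euler-characteristic count via the exact sequences, propagated by induction, together with the fact that distinct $\delta$-pairs starting from a fixed $\mu$ differ in distinct rows (so the chain is linear and totally ordered by inclusion). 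Assembling the two-step modules into the long exact sequence, with each image a simple module, then follows formally from the Loewy structure once the composition multiplicities are known.

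The main obstacle I anticipate is the converse direction and the exactness of the long sequence: showing that a $\delta$-pair genuinely yields a nonzero homomorphism $\Delta_{\lambda^{(i+1)}}^K(n) \hookrightarrow \Delta_{\lambda^{(i)}}^K(n)$ with simple image, and that no \emph{extra} composition factors sneak in beyond nearest neighbours in the chain. This requires controlling $\Ext^1$ between the simples $L_{\lambda^{(i)}}^K(n)$, or equivalently bounding the socle/head structure, which is where the combinatorics of contents and the restriction of the semisimplicity criterion to subalgebras do the real work. I would handle this by reducing, via repeated application of the localisation functor $F_{n,n-t}$ and the Morita equivalence, to the base cases where $n$ is small enough that the algebra is either semisimple or has an easily computed radical series, and then lift the structure back up using that $G_{n-t,n}$ is a full embedding preserving cell modules.
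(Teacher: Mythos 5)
First, a point of comparison: the paper does not prove this statement at all --- Theorem \ref{thm:part0blocks} is recalled verbatim from \cite[Proposition 9]{martin1996structure}, so there is no internal proof to measure your attempt against. Judged on its own terms, your outline follows the standard strategy (induction on $n$ via localisation/globalisation, the half algebras and the Morita equivalence of Theorem \ref{prop:morita}, plus unitriangularity of the decomposition matrix), but it has a genuine gap at its centre. The branching rules \eqref{eq:resn}--\eqref{eq:indn} are completely independent of $\delta$: they hold for every non-zero parameter, including all the semisimple ones, and the Morita equivalence merely shifts $\delta$ to $\delta-1$ uniformly. So ``tracking how the parameter $\delta$ interacts with the added strip of boxes under these functors'' cannot by itself produce the content condition $\ct(\varepsilon)=\delta-|\mu|$ that defines a $\delta$-pair. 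Some $\delta$-dependent invariant must be computed, and in this paper's own framework that invariant is the central element $Z_n$ of Section 6: Lemma \ref{lem:partzn} shows $Z_n$ acts on $\Delta_\mu^{\bbf}(n;\delta)$ by $t\delta+\binom{|\mu|}{2}+\ct(\mu)$, and comparing eigenvalues for linked $\mu\subset\lambda$ is exactly what forces the numerical condition (this is how Theorem \ref{thm:partdeltapblocks} is proved over $k$; the same computation over $K$ is the engine behind Martin's Proposition 9). Your sketch presupposes this invariant (``the relevant invariant'') without constructing it.

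The second gap is the converse direction and the Loewy structure. You correctly identify that one must exhibit a nonzero map $\Delta_{\lambda^{(i+1)}}^K(n)\rightarrow\Delta_{\lambda^{(i)}}^K(n)$ with simple image and exclude composition factors beyond nearest neighbours in the chain, but the proposed remedy --- reduce to small $n$ and ``lift the structure back up using that $G_{n-t,n}$ is a full embedding'' --- does not work as stated: $G_{n-t,n}$ is only right exact, need not send simple modules to simple modules, and $\Delta_\lambda^K(n)$ can acquire composition factors $L_\nu^K(n)$ with $|\mu|<|\nu|\leq n$ that are invisible at level $n-t$ (they are killed by $F_{n,n-t}$), which is precisely where the extra factors you are trying to exclude would live. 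The standard way to close this is a dimension count: compute $\dim L_\lambda^K(n)$ recursively from the restriction rules and verify $\dim\Delta_{\lambda^{(i)}}^K(n)=\dim L_{\lambda^{(i)}}^K(n)+\dim L_{\lambda^{(i+1)}}^K(n)$, which simultaneously establishes the two-step Loewy structure and the exactness of the long sequence. Without that count, the ``dimension or Euler-characteristic'' step in your outline is an assertion rather than an argument. (A small further slip: for a fixed $\mu$ there is in fact at most \emph{one} $\lambda$ with $\mu\hookrightarrow_\delta\lambda$, since the requirements $\lambda_i=\delta-|\mu|+i$ for distinct rows $i$ are mutually incompatible with $\lambda$ being a partition; your phrasing about ``distinct $\delta$-pairs starting from a fixed $\mu$'' reaches the right conclusion about linearity of the chain for a slightly wrong reason.)
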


\subsection*{Reflection geometry}
We now give an equivalent description of the blocks of the partition algebra $P_n^K(\delta)$ in terms of a reflection group action, similar to the one given for blocks of the symmetric group in positive characteristic in Section 3. We will see at the end of Section 8, how these two combine in some sense to give the blocks of the partition algebra in positive characteristic.

Let $\{\varepsilon_0,\dots,\varepsilon_n\}$ be a set of formal symbols and set
	\[\widehat{E}_n=\bigoplus_{i=0}^n\mathbb{R}\varepsilon_i\]
to be the $(n+1)$-dimensional space with basis $\varepsilon_0, \ldots , \varepsilon_{n}$.
We have an inner product $\langle~,~\rangle$ on $\widehat{E}_n$ given by extending linearly the relations
	\[\langle\varepsilon_i,\varepsilon_j\rangle=\delta_{ij}\]
for $0\leq i,j\leq n$ where $\delta_{ij}$ is the Kronecker delta. 

Let $\widehat{\Phi}_n=\{\varepsilon_i-\varepsilon_j:0\leq i,j\leq n\}$ be a root system of type $A_n$, and $\widehat{W}_n\cong\sym_{n+1}$ the corresponding Weyl group, generated by the reflections $s_{i,j}$ ($0\leq i<j\leq n$) defined by
	\[s_{i,j}(x)=x-\langle x,\varepsilon_i-\varepsilon_j\rangle(\varepsilon_i-\varepsilon_j)\]
	for all $x\in\widehat{E}_n$.

Observe that we have extended the Euclidian space $E_n$ from Section 3 by adding the basis vector $\varepsilon_0$, resulting in a slightly unusual labelling of the roots. This is to ensure consistency when we combine these results in Section 8.  Note also that, as opposed to  Section 3,  we do not consider affine reflections in this case.

	Now we also extend the shift $\rho_n$ and define $\rho_n(\delta)=(\delta,-1,-2,\dots,-n)\in \widehat{E}_n$. We then define a shifted action of $\widehat{W}_n$ on $\widehat{E}_n$, given by
	\[w\cdot_\delta x=w(x+\rho_n(\delta))-\rho_n(\delta)\]
for all $w\in \widehat{W}_n$ and $x\in \widehat{E}_n$.
Given a partition $\lambda=(\lambda_1,\lambda_2,\dots)\in \Lambda_{\leq n}$, let 
\[\hat\lambda=(-|\lambda|,\lambda_1,\dots,\lambda_n)=-|\lambda|\varepsilon_0+\sum_{i=1}^n\lambda_i\varepsilon_i\in \widehat{E}_n.\]
We then have the following reformulation of the blocks of $P_n^K(\delta)$.
\begin{thm}\label{thm:part0blocksgeom}
	Let $\lambda,\mu\in\Lambda_{\leq n}$.  Then we have $\mu\in \mathcal{B}_\lambda (P_n^K(\delta))$ if and only if $\hat\mu\in \widehat{W}_n\cdot_\delta\hat\lambda$.
\end{thm}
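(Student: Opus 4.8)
The plan is to translate the combinatorial description of blocks from Theorem~\ref{thm:part0blocks} into the language of the reflection action. The key observation is that a $\delta$-pair $\mu\hookrightarrow_\delta\lambda$, where $\lambda$ is obtained from $\mu$ by adding a horizontal strip in a single row $i$ whose last box has content $\delta-|\mu|$, should correspond precisely to applying a single reflection $s_{0,i}$ to $\hat\mu$ under the shifted action $\cdot_\delta$. So the first step is to compute, for a partition $\lambda\in\Lambda_{\le n}$, the vector $\hat\lambda+\rho_n(\delta)=(\delta-|\lambda|,\lambda_1-1,\lambda_2-2,\dots,\lambda_n-n)$, whose entries I will denote $\beta_0=\delta-|\lambda|$ and $\beta_i=\lambda_i-i$ for $1\le i\le n$. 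Note these are exactly the (shifted) content-type coordinates, so the zeroth coordinate encodes $\delta$ and the remaining ones encode the usual $\beta$-numbers of $\lambda$.

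Next I would verify the forward direction: if $\mu\hookrightarrow_\delta\lambda$ with the strip added in row $i$, I claim $\hat\lambda=s_{0,i}\cdot_\delta\hat\mu$. Writing $a=\hat\mu+\rho_n(\delta)$, the reflection $s_{0,i}$ swaps coordinates $a_0$ and $a_i$. The condition that the last added box has content $\delta-|\mu|$ says $\lambda_i-i=\delta-|\mu|$, i.e.\ the new $i$-th coordinate of $\hat\lambda+\rho_n(\delta)$ equals the old zeroth coordinate $\delta-|\mu|$ of $\hat\mu+\rho_n(\delta)$; and since $|\lambda|=\lambda_i-\mu_i+|\mu|=(\delta-|\mu|+i)-\mu_i+|\mu|$ one checks the new zeroth coordinate $\delta-|\lambda|$ equals the old $i$-th coordinate $\mu_i-i$. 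Thus swapping coordinates $0$ and $i$ of $a$ gives exactly $\hat\lambda+\rho_n(\delta)$, which is the assertion $\hat\lambda=s_{0,i}\cdot_\delta\hat\mu$. Since the block of $P_n^K(\delta)$ containing $\lambda$ is, by Theorem~\ref{thm:part0blocks}, the chain generated by iterated $\delta$-pairs, each step lies in a single $\widehat{W}_n$-orbit, giving $\mathcal{B}_\lambda(P_n^K(\delta))\subseteq \widehat{W}_n\cdot_\delta\hat\lambda\cap\{\hat\nu:\nu\in\Lambda_{\le n}\}$.

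For the reverse inclusion I must show that every partition $\nu\in\Lambda_{\le n}$ with $\hat\nu\in\widehat{W}_n\cdot_\delta\hat\lambda$ actually lies in the block of $\lambda$. The natural strategy is to analyse the $\widehat{W}_n\cong\sym_{n+1}$-orbit directly: the shifted action permutes the coordinates of $\hat\lambda+\rho_n(\delta)$, and an element $\nu$ of $\Lambda_{\le n}$ arises exactly when the permuted tuple, after reordering the last $n$ entries into the strictly decreasing $\beta$-sequence of a genuine partition, is consistent. I would argue that any two partitions in the same orbit can be connected by a sequence of elementary transpositions of the form $s_{0,i}$ and $s_{i,j}$ (with $1\le i<j$) that keep us inside $\Lambda_{\le n}$, and that each such admissible step is (up to the symmetric-group reorderings, which do not change the partition) a $\delta$-pair move in one direction or the other. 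The main obstacle is precisely this combinatorial connectivity argument: one must show the orbit intersected with genuine partitions is connected under $\delta$-pair steps, handling the bookkeeping of which coordinate is the ``$\delta$-coordinate'' $\beta_0$ and ensuring that reflections $s_{i,j}$ with $i,j\ge 1$ merely permute $\beta$-numbers within the same partition. I would structure this as a careful induction on $|\lambda|$ (or on the number of boxes that must be moved), reducing each orbit element to a canonical representative via admissible $\delta$-pair moves and invoking Theorem~\ref{thm:part0blocks} at each step to stay within a single block.
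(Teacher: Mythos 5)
Your forward direction is correct and is exactly the paper's computation: a $\delta$-pair in row $i$ corresponds to the reflection $s_{0,i}$ under the shifted action, so a chain of $\delta$-pairs stays inside a single $\widehat{W}_n$-orbit.

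The reverse direction, however, is only a plan, and the plan omits the one idea that makes it work. You flag ``the combinatorial connectivity argument'' as the main obstacle and propose an induction on $|\lambda|$ reducing each orbit element to a canonical representative by ``admissible $\delta$-pair moves'', but you never say why such moves exist or why the set of partitions in an orbit is connected by them. The paper's resolution is a rigidity observation that makes connectivity automatic: for any partition $\nu\in\Lambda_{\leq n}$ the last $n$ coordinates of $\hat\nu+\rho_n(\delta)$, namely $(\nu_1-1,\nu_2-2,\dots,\nu_n-n)$, form a \emph{strictly decreasing} sequence. Hence if $\hat\mu+\rho_n(\delta)=w(\hat\lambda+\rho_n(\delta))$ with $\lambda$ and $\mu$ both partitions, then $w$ is pinned down by the single choice of which entry of $\hat\lambda+\rho_n(\delta)$ lands in position $0$ (the remaining entries must be placed in decreasing order). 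Writing that entry as $\lambda_i-i$ and comparing it with $\delta-|\lambda|$, one obtains explicitly $\hat\mu+\rho_n(\delta)=(s_{0,i}\cdots s_{0,j+1})\cdot_\delta(\hat\lambda+\rho_n(\delta))$ for a suitable $j$, and the forward computation identifies this composite as a walk along the chain of $\delta$-pairs of Theorem \ref{thm:part0blocks}. In particular the orbit meets $\Lambda_{\leq n}$ in at most $n+1$ partitions, and these automatically line up into the single chain of Theorem \ref{thm:part0blocks}; no induction on $|\lambda|$ and no separate connectivity lemma are needed. Without this observation your sketch does not close: an individual reflection $s_{0,i}$ applied to a partition need not yield a partition, so a step-by-step argument that ``keeps us inside $\Lambda_{\leq n}$'' at every elementary transposition is not available in the generality you assert, and the burden of proof for your claimed connectivity is left entirely undischarged.
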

\begin{proof}
	We saw in Theorem 5.3 that the blocks of $P_n^K(\delta)$ are given by maximal chains of partitions
		\[\lambda^{(0)}\subset\lambda^{(1)}\subset\dots\subset\lambda^{(r)}\]
	where for each $i$, ($\lambda^{(i-1)}$,$\lambda^{(i)}$) form a $\delta$-pair, differing in the $i$-th row. We claim that $\widehat{\lambda^{(i)}}=s_{0,i}\cdot_\delta\widehat{\lambda^{(i-1)}}$. Indeed,
	\begin{align}
		s_{0,i}\cdot_\delta\widehat{\lambda^{(i-1)}}&=(\lambda_i^{(i-1)}-i,\lambda^{(i-1)}_1-1,\dots,-|\lambda^{(i-1)}|+\delta,\dots,\lambda^{(i-1)}_n-n)-\rho(\delta)\nonumber\\
			&=(\lambda_i^{(i-1)}-i-\delta,\lambda^{(i-1)}_1,\lambda^{(i-1)}_2,\dots,-|\lambda^{(i-1)}|+\delta+i,\dots,\lambda^{(i-1)}_n).\label{eq:partchain}
	\end{align}
	Now the partition 
	\[(\lambda^{(i-1)}_1,\lambda^{(i-1)}_2,\dots,-|\lambda^{(i-1)}|+\delta+i,\dots,\lambda^{(i-1)}_n)\]
	obtained from \eqref{eq:partchain} differs from $\lambda^{(i-1)}$ by a strip of boxes in row $i$ only, the last of which has content
		\[(-|\lambda^{(i-1)}|+\delta+i)-i=\delta-|\lambda^{(i-1)}|\]
	and so $s_{0,i}\cdot_\delta\widehat{\lambda^{(i-1)}}=\widehat{\lambda^{(i)}}$ as claimed. Therefore if $\mu\neq\nu\in\Lambda_{\leq n}$ are in the same block then $\mu=\lambda^{(i)}$ and $\nu=\lambda^{(j)}$ for some $i<j$ say, and
		\[\hat\nu=(s_{0,j}\dots s_{0,i+2}s_{0,i+1})\cdot_\delta\hat\mu.\]

	Conversely, suppose $\lambda,\mu\in\Lambda_{\leq n}$ satisfy $\hat\mu\in \widehat{W}_n\cdot_\delta\hat\lambda$. Since $\lambda$ is a partition, the sequence $(\lambda_1-1,\lambda_2-2,\dots,\lambda_n-n)$ is strictly decreasing, and similarly for $\mu$. Therefore if $\hat\mu\in \widehat{W}_n\cdot_\delta\hat\lambda$, then $\hat\mu+\rho_n(\delta)=w(\hat\lambda+\rho_n(\delta))$ for some $w\in \widehat{W}_n$ not fixing entry $0$, and we have
		\[ \hat\mu+\rho_n(\delta)=(\lambda_i-i,\dots)\]
	for some $1\leq i\leq n$. If $\lambda_i-i=\delta-|\lambda|$ then $\mu=\lambda$ and the result is immediate.
	
\noindent If now $\lambda_i-i<\delta-|\lambda|$, then
		\[\hat\mu+\rho_n(\delta)=(\lambda_i-i,\dots,\lambda_j-j,\delta-|\lambda|,\lambda_{j+1}-(j+1),\dots,\lambda_{i-1}-(i-1),\lambda_{i+1}-(i+1),\dots)\]
	for some $j$. If instead $\lambda_i-i>\delta-|\lambda|$ then
		\begin{align*}
		\hat\mu&+\rho_n(\delta)=\\&(\lambda_i-i,\dots,\dots,\lambda_{i-1}-(i-1),\lambda_{i+1}-(i+1),\dots,\lambda_j-j,\delta-|\lambda|,\lambda_{j+1}-(j+1),\dots)\end{align*}
	for some $j$. In either case, we have
		\[\hat\mu+\rho_n(\delta)=(s_{0,i}\dots s_{0,j+2}s_{0,j+1})\cdot_\delta(\hat\lambda+\rho_n(\delta)).\]
Using the calculation in \eqref{eq:partchain} we see that $\lambda$ and $\mu$ must be elements in a chain of $\delta$-pairs, and so are in the same block.
\end{proof}

\section{A necessary condition for blocks}

Now we turn to the representation theory of the partition algebra $P_n^k(\delta)$ over the field $k$ of positive characteristic $p>0$. We assume that $\delta\in k$ is non-zero.
In this section we will use the action of the Jucys-Murphy elements on the cell-modules to deduce a necessary condition for two partitions to be in the same cell-block.

 The Jucys-Murphy elements for the partition algebra were introduced in \cite{halverson2005partition}. These elements were later defined inductively in \cite[Section 2.3]{enyang2013seminormal} as follows.

\begin{defn}
\begin{itemize}
	\item[(i)] Set $L_0=0$, $L_1=p_1$, $\sigma_1=1$, $\sigma_2=s_1$ and for $i\geq1$, define
		\[L_{i+1}=-s_iL_ip_{i+\half}-p_{i+\half}L_is_i+p_{i+\half}L_ip_{i+1}p_{i+\half}+s_iL_is_i+\sigma_{i+1},\]
where for $i\geq2$ we define
			\begin{align*}	\sigma_{i+1}=s_{i-1}s_i\sigma_i&s_is_{i+1}+s_ip_{i-\half}L_{i-1}s_ip_{i-\half}+p_{i-\half}L_{i-1}s_ip_{i-\half}\\&-s_ip_{i-\half}L_{i-1}s_{i-1}p_{i+\half}p_ip_{i-\half}-p_{i-\half}p_ip_{i+\half}s_{i-1}L_{i-1}p_{i-\half}s_i.\end{align*}
		
	\item[(ii)] Set $L_{\half}=0$, $\sigma_\half=1$, $\sigma_{1+\half}=1$ and for $i\geq1$, define
\[L_{i+\half} = -L_ip_{i+\half} - p_{i+\half} L_i + p_{i+\half}L_ip_ip_{i+\half}+s_iL_{i-\half}s_i+\sigma_{i+\half},\]
where for $i\geq 2$ we define
		\begin{align*}
			\sigma_{i+\half}=s_{i-1}s_i&\sigma_{i-\half} s_is_{i-1}+p_{i-\half}L_{i-1}s_ip_{i-\half}s_i+s_ip_{i-\half}L_{i-1}s_ip_{i-\half}\\
			&-p_{i-\half}L_{i-1}s_{i-1}p_{i+\half}p_ip_{i-\half}-s_ip_{i-\half}p_ip_{i+\half}s_{i-1}L_{i-1}p_{i-\half}s_i.
		\end{align*}
\end{itemize}
\end{defn}
If we project these elements onto the quotient $P^k_n(\delta)/J^{(n-1)}_n$, where $J^{(n-1)}_n$ is defined in \eqref{eq:partfiltration}, then we obtain the following result.
\begin{lem}\label{lem:seminormal}
	\begin{itemize}
		\item[(i)] $\sigma_i+J^{(n-1)}_n=s_{i-1}+J^{(n-1)}_n$ for all $i\geq 2$.
		\item[(ii)] $L_i+J^{(n-1)}_n=\sum_{j=1}^{i-1}s_{j,i}+J^{(n-1)}_n$ for all $i\geq2$.
		\item[(iii)] $\sigma_{i+\half}+J^{(n-1)}_n=1+J^{(n-1)}_n$ for all $i\geq0$.
		\item[(iv)] $L_{i+\half}+J^{(n-1)}_n= i+J^{(n-1)}_n$ for all $i\geq0$.
		\item[(v)] Let $Z_n=L_\half+L_1+L_{1+\half}+\dots+L_n$. Then
					\[Z_n+J^{(n-1)}_n=\frac{n(n-1)}{2}+\sum_{1\leq i<j\leq n}s_{i,j}+J^{(n-1)}_n.\]
	\end{itemize}
\end{lem}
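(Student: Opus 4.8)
The plan is to reduce everything to the symmetric group quotient $P_n^k(\delta)/J_n^{(n-1)}\cong k\sym_n$ of \eqref{eq:partlift}, using the fact that multiplication of diagrams never increases the number of propagating blocks. Since $J_n^{(n-1)}$ is spanned by the diagrams with at most $n-1$ propagating blocks, and each generator $p_i$ and $p_{i,j}$ (hence also $p_{i\pm\half}=p_{i-1,n},p_{i,n}$) has exactly $n-1$ propagating blocks, any product of diagrams in which an explicit $p$-generator occurs already lies in $J_n^{(n-1)}$ and is therefore zero in the quotient. On the other hand each permutation diagram $s_{i,j}$ reduces to the transposition $(i,j)\in\sym_n$. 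Writing $\equiv$ for congruence modulo $J_n^{(n-1)}$, the consequence is that when each defining recursion is reduced, every summand carrying an explicit factor $p_{i\pm\half}$, $p_i$ or $p_{i+1}$ dies, leaving only the leading summand, which I then evaluate using the inductive hypothesis.

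For (i) and (iii) I would induct on $i$. For (iii) the base cases $\sigma_{\half}=\sigma_{1+\half}=1$ are given, and the surviving summand conjugates $\sigma_{i-\half}$ by $s_{i-1}s_i$; conjugating the identity gives the identity, so $\sigma_{i+\half}\equiv 1$. For (i) the base case $\sigma_2=s_1$ is given, and the surviving summand conjugates $\sigma_i$ by $s_{i-1}s_i$; feeding in $\sigma_i\equiv s_{i-1}$ and applying the braid relation $s_{i-1}s_is_{i-1}=s_is_{i-1}s_i$ collapses the conjugate to $s_i$, so $\sigma_{i+1}\equiv s_i$.

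With (i) and (iii) established, (ii) and (iv) follow by a parallel induction. For (iv) we have $L_{\half}=0$ and the surviving summand gives $L_{i+\half}\equiv s_iL_{i-\half}s_i+\sigma_{i+\half}$; since $L_{i-\half}\equiv i-1$ is a central scalar and $\sigma_{i+\half}\equiv 1$, this reads $L_{i+\half}\equiv i$. For (ii) note $L_1=p_1\equiv 0$, so the recursion at $i=1$ gives the base case $L_2\equiv\sigma_2=s_{1,2}$; the inductive step gives $L_{i+1}\equiv s_iL_is_i+\sigma_{i+1}$, and substituting $L_i\equiv\sum_{j=1}^{i-1}s_{j,i}$, $\sigma_{i+1}\equiv s_{i,i+1}$ together with the conjugation identity $s_is_{j,i}s_i=s_{j,i+1}$ (valid as $j\le i-1$) yields $L_{i+1}\equiv\sum_{j=1}^{i-1}s_{j,i+1}+s_{i,i+1}=\sum_{j=1}^{i}s_{j,i+1}$. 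Finally (v) is immediate by summing: the integer terms contribute $\sum_{i=1}^n\sum_{j=1}^{i-1}s_{j,i}=\sum_{1\le i<j\le n}s_{i,j}$ by (ii), and the half-integer terms contribute $\sum_{i=0}^{n-1}i=\tfrac{n(n-1)}{2}$ by (iv).

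The conceptual heart is the propagating-block observation that annihilates every $p$-laden summand, after which all five claims become elementary identities in $k\sym_n$. The only genuine calculations are then the braid-relation collapse in (i) and the conjugation formula $s_is_{j,i}s_i=s_{j,i+1}$ in (ii). I expect the main obstacle to be purely bookkeeping: checking, summand by summand, that each of the correction terms in the $\sigma$- and $L$-recursions really does contain a factor forcing at most $n-1$ propagating blocks, which is immediate once one records that $p_i$, $p_{i,j}$ and their relatives each have exactly $n-1$ propagating blocks and that concatenation cannot raise this count.
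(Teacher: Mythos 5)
Your proposal is correct and follows essentially the same route as the paper: induction on $i$, with every summand containing a $p$-generator vanishing modulo $J_n^{(n-1)}$, followed by the braid-relation and conjugation identities in $k\sym_n$. Your explicit justification that each of $p_i$, $p_{i,j}$, $p_{i\pm\half}$ has exactly $n-1$ propagating blocks (so the correction terms lie in the ideal) is a point the paper leaves implicit, but otherwise the arguments coincide.
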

	\begin{proof}
		We prove these statements by induction on $i$.
		\begin{itemize}
		\item[(i)] This is true for $i=2$ by definition. Now let $i\geq2$, then we have
			\begin{align*}
				\sigma_{i+1}+J^{(n-1)}_n&= s_{i-1}s_i\sigma_is_is_{i-1}+J^{(n-1)}_n\\
						&=s_{i-1}s_is_{i-1}s_is_{i-1}+J^{(n-1)}_n\text{ by induction}\\
						&=s_i+J^{(n-1)}_n.
			\end{align*}
		\item[(ii)] We have $L_2+J^{(n-1)}_n=\sigma_2+J^{(n-1)}_n= s_1+J^{(n-1)}_n$. Now let $i\geq2$, then we have
			\begin{align*}
				L_{i+1}+J^{(n-1)}_n&= s_iL_is_i+\sigma_{i+1}+J^{(n-1)}_n\\
					&= s_i\left(\sum_{j=1}^{i-1}s_{j,i}\right)s_i+s_i+J^{(n-1)}_n\text{ by induction and using (i)}\\
					&= \sum_{j=1}^{i-1}s_{j,i+1}+s_i+J^{(n-1)}_n\\
					&=\sum_{j=1}^is_{j,i+1}+J^{(n-1)}_n.
			\end{align*}
		\item[(iii)] We have $\sigma_{\half}=1$, and for $i\geq1$
			\begin{align*}
				\sigma_{i+\half}+J^{(n-1)}_n&= s_{i-1}s_i\sigma_{i-\half}s_is_{i-1}+J^{(n-1)}_n\\
							&= s_{i-1}s_i1s_is_{i-1}+J^{(n-1)}_n\\
							&= 1+J^{(n-1)}_n.
			\end{align*}
		\item[(iv)] We have $L_\half=0$, and for $i\geq1$
			\begin{align*}
				L_{i+\half}+J^{(n-1)}_n&= s_iL_{i-\half}s_i+\sigma_{i+\half}+J^{(n-1)}_n\\
						&= s_i(i-1)s_i+1+J^{(n-1)}_n\text{ by induction and using (iii)}\\
						&= i+J^{(n-1)}_n.
			\end{align*}
		\item[(v)] Follows immediately from (ii) and (iv).
		\end{itemize}
	\end{proof}
\noindent Recall the following result.
\begin{lem}[{\cite[Theorem 3.35]{halverson2005partition}, \cite[Lemma 3.14]{enyang2013seminormal}}]\label{lem:partzn}
	Let $\mu\in\Lambda_{\leq n}$ with $|\mu|=n-t$ for some $t\geq0$. Then $Z_n$ acts on $\Delta_\mu^k(n;\delta)$ as scalar multiplication by
	\[t\delta+\left(\!\begin{array}{c}|\mu|\\2\end{array}\!\right)+\mathrm{ct}(\mu).\]
\end{lem}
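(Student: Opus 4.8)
My plan is to exploit the fact that $Z_n$ is central in $P_n^k(\delta)$ (this is part of \cite[Theorem 3.35]{halverson2005partition}, \cite[Lemma 3.14]{enyang2013seminormal}), so that, since each cell module $\Delta_\mu^k(n;\delta)$ is cyclic, it suffices to evaluate $Z_n$ on a single cyclic generator: centrality then forces $Z_n$ to act by that same scalar on the whole module. I would also isolate the purely symmetric-group input, namely that the class sum $\sum_{1\le i<j\le n}s_{i,j}$ is central in $k\sym_n$ and acts on the Specht module $S_k^\mu$ as the scalar $\mathrm{ct}(\mu)$. The cleanest justification is to argue over $R$: over $K$ the Specht module is simple and this class sum acts by $\mathrm{ct}(\mu)$, hence it acts by $\mathrm{ct}(\mu)$ on the lattice $S_R^\mu$ and, after reduction modulo $\mathfrak{m}$, by $\mathrm{ct}(\mu)$ on $S_k^\mu$.

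With these two ingredients I would induct on $n$ for fixed $\mu$. In the base case $|\mu|=n$ (that is $t=0$) the cell module $\Delta_\mu^k(n;\delta)=S_k^\mu$ is inflated from $P_n^k(\delta)/J_n^{(n-1)}\cong k\sym_n$, so $J_n^{(n-1)}$ annihilates it. Lemma \ref{lem:seminormal}(v) then shows $Z_n$ acts exactly as $\frac{n(n-1)}{2}+\sum_{1\le i<j\le n}s_{i,j}$, which by the previous paragraph is multiplication by $\binom{n}{2}+\mathrm{ct}(\mu)=\binom{|\mu|}{2}+\mathrm{ct}(\mu)$, matching the claimed formula since $t=0$.

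For the inductive step I would use the globalisation identity $\Delta_\mu^k(n;\delta)\cong G_{n-1,n}(\Delta_\mu^k(n-1;\delta))$, valid when $\mu\in\Lambda_{\le n-1}$. Writing $e_1=\frac{1}{\delta}p_1$ and recalling both $F_{n,n-1}G_{n-1,n}\cong\mathrm{id}$ and the isomorphism $\Phi_{n,1}\colon P_{n-1}^k(\delta)\xrightarrow{\sim}e_1P_n^k(\delta)e_1$, the heart of the argument is the algebra identity
\[ e_1 Z_n e_1 = \Phi_{n,1}(Z_{n-1}) + \delta\, e_1 \]
in $P_n^k(\delta)$. Granting this, apply $e_1$ to $\Delta_\mu^k(n;\delta)$: on $e_1\Delta_\mu^k(n;\delta)\cong\Delta_\mu^k(n-1;\delta)$ the operator $e_1Z_ne_1$ acts, by centrality, as the same scalar $c_\mu(n)$ by which $Z_n$ acts on $\Delta_\mu^k(n;\delta)$, while by the identity it acts as $c_\mu(n-1)+\delta$. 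Hence $c_\mu(n)=c_\mu(n-1)+\delta$, and since $t=n-|\mu|$ increases by one with $n$, induction yields $c_\mu(n)=t\delta+\binom{|\mu|}{2}+\mathrm{ct}(\mu)$.

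I expect the main obstacle to be the shift identity $e_1Z_ne_1=\Phi_{n,1}(Z_{n-1})+\delta e_1$ (equivalently, that $L_{n-\frac12}+L_n$ contributes exactly the extra $\delta$ under globalisation). Verifying it requires unwinding the inductive definitions of the $L_i$ and $L_{i+\frac12}$ and tracking how $s_i,p_{i,j},p_i$ interact with the leftmost idempotent; the $\delta$ should arise precisely from the loops created when $p_i$ meets the node introduced by globalisation, in the same spirit as the computation $(\xi_{n+1}p_i\xi_{n+1})^2=(\delta-1)\xi_{n+1}p_i\xi_{n+1}$ in the proof of Theorem \ref{prop:morita}. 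An alternative that sidesteps the explicit recursion is to lift $\delta\in k$ to $\tilde\delta\in R$ whose image in $K$ avoids $\{0,1,\dots,2n-2\}$: then $P_n^K(\tilde\delta)$ is semisimple, $Z_n$ acts by a scalar on the simple cell module $\Delta_\mu^K(n;\tilde\delta)$ which one computes by the same globalisation recursion in the semisimple setting, and one transfers the result to $k$ through the $R$-form $\Delta_\mu^R(n;\tilde\delta)$.
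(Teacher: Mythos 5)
The paper does not actually prove this lemma: it is quoted verbatim from \cite[Theorem 3.35]{halverson2005partition} and \cite[Lemma 3.14]{enyang2013seminormal}, so there is no internal proof to compare against. Judged on its own terms, your outline has the right shape --- the base case $t=0$ via Lemma \ref{lem:seminormal}(v) and the content eigenvalue of the class sum is correct, and the recursion $c_\mu(n)=c_\mu(n-1)+\delta$ is exactly what the claimed formula satisfies --- but the proof has a genuine gap where you yourself flag it: the identity $e_1Z_ne_1=\Phi_{n,1}(Z_{n-1})+\delta e_1$ is asserted, not proved, and it carries essentially all of the content of the lemma. Checking it is not a routine bookkeeping exercise: $Z_n$ is built from a recursion anchored at node $1$ (starting from $L_1=p_1$), whereas $\Phi_{n,1}(Z_{n-1})$ is the same recursion anchored at node $2$ after the leftmost strand has been replaced by the singleton idempotent $e_1$, so comparing the two requires unwinding the full inductive definitions of the $L_i$ and $L_{i+\frac{1}{2}}$ --- which is precisely the work done in the cited references. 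Your fallback via a lift $\tilde\delta\in R$ with $P_n^K(\tilde\delta)$ semisimple does not remove this obstacle either: the reduction from $k$ to $K$ through the $R$-form is fine, but one still has to compute the scalar in the semisimple setting, which again requires either the same shift identity or the seminormal-form/branching machinery of \cite{enyang2013seminormal}; the difficulty is relocated, not resolved.

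Two smaller points. First, ``cyclic module plus central element'' does not by itself give scalar action: if $M=Av$ and $z$ is central, then $zv=av$ for some $a\in A$, which need not be a scalar multiple of $v$. What you actually need is either the standard fact that central elements act by scalars on cell modules of a cellular algebra, or (as your induction in fact provides, once the shift identity is granted) the observation that on $M=P_n^k(\delta)e_1\otimes_{P_{n-1}^k(\delta)}\Delta_\mu^k(n-1;\delta)$ one has $Z_n(ae_1\otimes v)=ae_1Z_ne_1\otimes v=ae_1\otimes(Z_{n-1}+\delta)v$, so scalar action on $\Delta_\mu^k(n-1;\delta)$ propagates to all of $\Delta_\mu^k(n;\delta)$. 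Second, your justification of the symmetric group ingredient over $k$ by passing through the $R$-form of the Specht module is correct and is the same device the paper uses elsewhere (e.g.\ in Proposition \ref{thm:partreduceblocks}). In summary: the architecture is sound and would constitute an independent proof of the cited result, but as written it is a reduction of the lemma to an unverified algebra identity rather than a complete proof.
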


\noindent We can now prove the following theorem. Note that this result was proved over the field of complex numbers $\mathbb{C}$ in \cite[Theorem 6.1]{doran2000partition}. However, their proof does not generalise to arbitrary fields.

\begin{thm}\label{thm:partdeltapblocks}
	Let $\lambda,\mu\in\Lambda_{\leq n}$ with $\lambda\in \Lambda_{\leq n}^*$. If $[\Delta^k_\mu(n;\delta):L^k_\lambda(n;\delta)]\neq 0$ then $|\lambda|-|\mu|=t\geq 0$ and we have
\begin{equation}\label{eq:deltappair}
t\delta - t|\mu| -\mathrm{ct}(\lambda)+\mathrm{ct}(\mu) -\frac{t(t-1)}{2} = 0
\end{equation}
in the field $k$. 
\end{thm}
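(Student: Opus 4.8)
The plan is to exploit the central element $Z_n$ acting on cell modules, whose eigenvalue is recorded in Lemma~\ref{lem:partzn}. The key observation is that $Z_n + J_n^{(n-1)}$ lies in the image of the centre of $k\sym_n$ under the quotient map of \eqref{eq:partlift}, and more importantly that $Z_n$ (being a sum of Jucys--Murphy-type elements) acts as a scalar on each cell module. Since a composition factor $L_\lambda^k(n;\delta)$ of $\Delta_\mu^k(n;\delta)$ must be annihilated by $Z_n - c_\mu$ where $c_\mu$ is the scalar by which $Z_n$ acts on $\Delta_\mu^k(n;\delta)$, and since $L_\lambda^k(n;\delta)$ is a quotient of $\Delta_\lambda^k(n;\delta)$ (on which $Z_n$ acts by $c_\lambda$), the two scalars must coincide: $c_\lambda = c_\mu$ in $k$.

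**Carrying it out.**
First I would record that $Z_n$ acts as a scalar on every cell module; this is exactly the content of Lemma~\ref{lem:partzn}, so $Z_n$ acts on $\Delta_\mu^k(n;\delta)$ by $t\delta + \binom{|\mu|}{2} + \mathrm{ct}(\mu)$ where $t = n - |\mu|$. The same lemma gives the scalar on $\Delta_\lambda^k(n;\delta)$, namely $t'\delta + \binom{|\lambda|}{2} + \mathrm{ct}(\lambda)$ where $t' = n - |\lambda|$. Next, since $[\Delta_\mu^k(n;\delta):L_\lambda^k(n;\delta)]\neq 0$, the simple module $L_\lambda^k(n;\delta)$ appears as a subquotient of $\Delta_\mu^k(n;\delta)$; because $Z_n$ is a (specific) \emph{central} element of $P_n^k(\delta)$ — this is the crucial point to verify — it acts by the single scalar $c_\mu$ on the whole of $\Delta_\mu^k(n;\delta)$, hence by $c_\mu$ on any subquotient, in particular on $L_\lambda^k(n;\delta)$. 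But $L_\lambda^k(n;\delta)$ is also the head of $\Delta_\lambda^k(n;\delta)$, on which $Z_n$ acts by $c_\lambda$, forcing $c_\lambda = c_\mu$. Equating the two scalars gives
\begin{equation*}
(n-|\mu|)\delta + \binom{|\mu|}{2} + \mathrm{ct}(\mu) = (n-|\lambda|)\delta + \binom{|\lambda|}{2} + \mathrm{ct}(\lambda)
\end{equation*}
in $k$. Writing $t = |\lambda| - |\mu|$, I would then rearrange: the $\delta$-terms contribute $(|\lambda|-|\mu|)\delta = t\delta$ on the left after moving everything to one side, and the binomial terms expand as $\binom{|\mu|}{2} - \binom{|\lambda|}{2}$. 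A short computation with $|\lambda| = |\mu| + t$ shows $\binom{|\mu|}{2} - \binom{|\lambda|}{2} = -t|\mu| - \tfrac{t(t-1)}{2}$, and collecting everything yields precisely \eqref{eq:deltappair}.

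**The main obstacle.**
The genuine content — and the step I expect to require the most care — is establishing that $Z_n$ is a central element of $P_n^k(\delta)$, or at least that it acts by a single scalar on each cell module (which is what Lemma~\ref{lem:partzn} asserts and what my argument consumes). Lemma~\ref{lem:partzn} is quoted, so I may take the scalar action on cell modules as given; the only remaining subtlety is the direction of the containment $t = |\lambda| - |\mu| \geq 0$. This sign is \emph{not} symmetric and must be pinned down: by the cellular structure recalled in Section~4, $[\Delta_\mu^k(n;\delta):L_\lambda^k(n;\delta)]\neq 0$ forces $\lambda \succeq \mu$ in the dominance order with degree, and in particular $|\lambda| \geq |\mu|$, so $t \geq 0$ as claimed. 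The rest is the elementary binomial manipulation, which I would present compactly rather than grinding through. I would be careful to note that the equation \eqref{eq:deltappair} is an identity in $k$, so it should be read with the image of $\mathbb{Z}$ in $k$ (via $R \twoheadrightarrow k$) throughout, consistent with the conventions fixed in the Notations.
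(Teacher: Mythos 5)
Your argument is correct, and it rests on the same central mechanism as the paper's proof, namely the scalar by which $Z_n$ acts on cell modules (Lemma \ref{lem:partzn}); but you execute it by a genuinely shorter route. The paper first uses the localisation functors \eqref{eq:partfn} to reduce to $|\lambda|=n$, so that $\Delta_\lambda^k(n;\delta)\cong S_k^\lambda$ is annihilated by $J_n^{(n-1)}$, and then invokes Lemma \ref{lem:seminormal}(v) to replace $Z_n$ modulo $J_n^{(n-1)}$ by $\tfrac{n(n-1)}{2}+\sum_{i<j}s_{i,j}$, computing the $\lambda$-side eigenvalue from the classical fact that $\sum_{i<j}s_{i,j}$ acts by $\mathrm{ct}(\lambda)$ on the Specht module. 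You bypass the localisation, Lemma \ref{lem:seminormal} and the symmetric-group content eigenvalue entirely by applying Lemma \ref{lem:partzn} twice, once to $\Delta_\mu^k(n;\delta)$ and once to $\Delta_\lambda^k(n;\delta)$, and equating the two scalars through the shared composition factor $L_\lambda^k(n;\delta)$; this is legitimate because an element acting as a scalar on a module acts as the same scalar on every subquotient, so centrality of $Z_n$ is indeed not needed, as you observe. The identity $\binom{|\mu|}{2}-\binom{|\lambda|}{2}=-t|\mu|-\tfrac{t(t-1)}{2}$ then yields \eqref{eq:deltappair}, and your derivation of $t\geq 0$ from the dominance order with degree is exactly the paper's appeal to cellularity. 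What the paper's longer detour buys is the explicit setup (the submodules $M\subset N$ and the non-zero map $S_k^\lambda\to N/M$) that is reused almost verbatim in the proof of Theorem \ref{thm:blocks<orbits}; your version is cleaner for the theorem in isolation.
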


 \noindent Note that $\frac{t(t-1)}{2}$ and $\binom{ |\mu|}{2} $ are both integers and so the expressions in Lemma 6.3 and Theorem 6.4 make  sense in the field $k$.

	\begin{proof} The fact that $t\geq 0$ follows directly from the cellularity of $P_n^k(\delta)$.
		Now, by use of the localisation functors given in \eqref{eq:partfn} we may assume that $\lambda\vdash n$ and $\mu\vdash n-t$. Therefore we have $\Delta_\lambda^k(n;\delta)\cong S^\lambda_k$, and the ideal $J_n^{(n-1)}$ acts as zero on $\Delta_\lambda^k(n;\delta)$.
		
		Now, as $[\Delta^k_\mu(n;\delta):L^k_\lambda(n;\delta)]\neq 0$ and $L_\lambda^k(n;\delta)$ appears as the head of $\Delta_\lambda^k(n;\delta)$,   there exist  submodules $M\subset N\subset \Delta_\mu^k(n;\delta)$ and a non-zero homomorphism
			\[\Delta_\lambda^k(n;\delta)\cong S^\lambda_k\longrightarrow N/M.\]
		By Lemma \ref{lem:seminormal}(v), the element
			\begin{equation}Z_n-\frac{n(n-1)}{2}-\sum_{1\leq i<j\leq n}s_{i,j}\label{eq:partzn}\end{equation}
		must act as zero on $N/M$. It is well-known that $\sum_{1\leq i<j\leq n}s_{i,j}$ acts by the scalar $\mathrm{ct}(\lambda)$ on $S^\lambda_k$ (see for example \cite[Chapter 1, Section 1 Exercise 3 and Section 7 Example 7]{macdonald}), and hence also on $N/M$. Using Lemma \ref{lem:partzn}, we then see that the element given in \eqref{eq:partzn} acts on $N/M$ by the scalar
		\begin{align*}
			&t\delta+\left(\!\begin{array}{c}|\mu|\\2\end{array}\!\right)+\mathrm{ct}(\mu)-\frac{n(n-1)}{2}-\mathrm{ct}(\lambda)\\
			=&t\delta-t|\mu|-\mathrm{ct}(\lambda)+\mathrm{ct}(\mu)-\frac{t(t-1)}{2}.
		\end{align*}
		Hence this must be zero in the field $k$. 
	\end{proof}

We will now strengthen this result to obtain a necessary condition for the cell-blocks of the partition algebra.
Let us start with the following lemma.

\begin{lem}\label{lem:surjhom}
 Let $\lambda\in \Lambda_n^*$ be a $p$-regular partition of $n>1$. Then there exists a removable node $\varepsilon_i$ of $\lambda$ such that $\lambda - \varepsilon_i$ is $p$-regular and there is a surjective homomorphism
$${\rm ind}_{n-\half}\Delta_{\lambda - \varepsilon_i}^k(n-\half;\delta) \longrightarrow \Delta_\lambda^k(n;\delta).$$
\end{lem}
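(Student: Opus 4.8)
The plan is to pass through the adjunction $\Hom(\mathrm{ind}_{n-\half}M,N)\cong\Hom(M,\mathrm{res}_nN)$ and read off the required node from the restriction of the simple head. Since $\lambda\vdash n$ we have $\Delta_\lambda^k(n;\delta)\cong S_k^\lambda$, which carries a simple head $L_\lambda^k(n;\delta)$. Because this head is simple, $\Delta_\lambda^k(n)$ is generated by any element lying outside $\rad\Delta_\lambda^k(n)$, so a homomorphism $\mathrm{ind}_{n-\half}\Delta_{\lambda-\varepsilon_i}^k(n-\half)\to\Delta_\lambda^k(n)$ is surjective \emph{iff} its composite with $\Delta_\lambda^k(n)\twoheadrightarrow L_\lambda^k(n)$ is nonzero. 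This reformulates the whole statement as: (a) produce a nonzero map to $L_\lambda^k(n)$, and (b) lift it to $\Delta_\lambda^k(n)$.

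For (a) I would first select the node. Restricting $L_\lambda^k(n)$ to $P_{n-\half}^k(\delta)$: since $L_\lambda^k(n)$ is killed by $J_n^{(n-1)}$ and $J_{n-\half}^{(n-1)}\subseteq J_n^{(n-1)}$, the module $\mathrm{res}_nL_\lambda^k(n)$ is inflated from $k\sym_{n-1}=P_{n-\half}^k(\delta)/J_{n-\half}^{(n-1)}$ and there coincides with $\mathrm{Res}^{\sym_n}_{\sym_{n-1}}D_k^\lambda$, which is self-dual. By the modular branching rules for the symmetric group, the socle — hence, by self-duality, the head — of $\mathrm{Res}^{\sym_n}_{\sym_{n-1}}D_k^\lambda$ contains $D_k^{\lambda-\varepsilon_i}$ for $\varepsilon_i$ a good (lowest normal) node, and removing a good node preserves $p$-regularity, so $\lambda-\varepsilon_i\in\Lambda_{n-1}^*$. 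Inflating, $L_{\lambda-\varepsilon_i}^k(n-\half)$ occurs in the head of $\mathrm{res}_nL_\lambda^k(n)$; since $\Delta_{\lambda-\varepsilon_i}^k(n-\half)$ has head $L_{\lambda-\varepsilon_i}^k(n-\half)$ and this simple sits in the socle as well, we get $\Hom(\Delta_{\lambda-\varepsilon_i}^k(n-\half),\mathrm{res}_nL_\lambda^k(n))\neq0$, i.e. $\Hom(\mathrm{ind}_{n-\half}\Delta_{\lambda-\varepsilon_i}^k(n-\half),L_\lambda^k(n))\neq0$.

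The main obstacle is step (b). The cell filtration of $\mathrm{ind}_{n-\half}\Delta_{\lambda-\varepsilon_i}^k(n-\half)$ supplied by \eqref{eq:indhalf} has its \emph{most dominant} factors at the top, and these are the modules $\Delta_\mu^k(n)$ with $\mu\rhd(\lambda-\varepsilon_i)$ and $\mu\succ\lambda$; they sit above $\Delta_\lambda^k(n)$, so one cannot simply read $\Delta_\lambda^k(n)$ off as a quotient. The leverage is that none of these higher factors contains $L_\lambda^k(n)$ as a composition factor, since $[\Delta_\mu^k(n):L_\lambda^k(n)]\neq0$ forces $\lambda\succeq\mu$. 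Using the $\Ext$-form of the adjunction (valid because induction here is exact), applied to $0\to\rad\Delta_\lambda^k(n)\to\Delta_\lambda^k(n)\to L_\lambda^k(n)\to0$, the lifting obstruction lives in $\Ext^1_{P_{n-\half}^k(\delta)}(\Delta_{\lambda-\varepsilon_i}^k(n-\half),\mathrm{res}_n\rad\Delta_\lambda^k(n))$, and I would argue that the class we built (the chosen socle copy of $D_k^{\lambda-\varepsilon_i}$) is annihilated by the connecting map, equivalently that this copy is already hit from $\mathrm{Hom}(\Delta_{\lambda-\varepsilon_i}^k(n-\half),\mathrm{res}_n\Delta_\lambda^k(n))$ rather than only from a higher radical layer. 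Verifying this vanishing is where the real work lies. Matching the methods advertised in the introduction, the alternative I would try in parallel is to construct the homomorphism integrally over $R$, where the characteristic-zero structure of Theorem~\ref{thm:part0blocks} pins down the relevant $\Hom$- and cell-filtration data, and then reduce modulo $p$, deducing surjectivity over $k$ from surjectivity of a suitably chosen $R$-lattice map by Nakayama's lemma.
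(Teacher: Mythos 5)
Your reduction is correct as far as it goes: since $\Delta_\lambda^k(n;\delta)$ has simple head, a map into it is surjective iff its composite with the projection onto $L_\lambda^k(n;\delta)$ is nonzero, and your step (a) --- identifying $\mathrm{res}_n L_\lambda^k(n;\delta)$ with the inflation of $\mathrm{Res}^{\sym_n}_{\sym_{n-1}}D_k^\lambda$ and invoking Kleshchev's modular branching rules to find a good node --- does produce a nonzero map $\mathrm{ind}_{n-\half}\Delta_{\lambda-\varepsilon_i}^k(n-\half;\delta)\to L_\lambda^k(n;\delta)$. But the lemma asserts a surjection onto the cell module, not onto its head, and the entire content lies in your step (b), which you explicitly leave open. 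The obstruction you locate in $\Ext^1_{P_{n-\half}^k(\delta)}\bigl(\Delta_{\lambda-\varepsilon_i}^k(n-\half;\delta),\mathrm{res}_n\rad\Delta_\lambda^k(n;\delta)\bigr)$ is genuinely present, and you give no argument that the relevant class dies; it is not clear that it does for a Kleshchev good node without substantial extra work. Your fallback via $R$-forms has the same problem in different clothing: an $R$-lattice map that becomes surjective over $K$ need only have full-rank image, so its reduction modulo $p$ can fail to be surjective, and Nakayama's lemma only applies once you already know the $R$-map hits a generating set --- which is precisely the point at issue. So the proposal has a genuine gap at its central step.

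The paper sidesteps the lifting problem by a different choice of node and a more combinatorial argument. It takes $i$ \emph{minimal} such that $\lambda-\varepsilon_i$ is $p$-regular, applies the branching rule \eqref{eq:indhalf} to obtain a surjection from $\mathrm{ind}_{n-\half}\Delta_{\lambda-\varepsilon_i}^k(n-\half;\delta)$ onto the cell-filtered module $\biguplus_{\mu\triangleright\lambda-\varepsilon_i}\Delta_\mu^k(n;\delta)$ with the most dominant factors at the top, observes that this quotient is an inflated $k\sym_n$-module and hence splits along the blocks of $k\sym_n$, and then checks explicitly (using the shape of $\lambda$ forced by the minimality of $i$, together only with Nakayama's conjecture, Theorem \ref{thm:nakayama}) that every $\mu\triangleright\lambda-\varepsilon_i$ with $\mu\succ\lambda$ lies in a different $p$-block from $\lambda$. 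After projecting onto the block component of $\lambda$, the factor $\Delta_\lambda^k(n;\delta)=S_k^\lambda$ therefore sits at the top of the filtration and is a quotient. That block-theoretic fact is exactly the input you would need to kill your $\Ext^1$ obstruction; as written, your proof is incomplete.
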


\begin{proof}
As $\lambda$ is a partition of $n>1$, it has at least one removable node. Now choose the removable node $\varepsilon_i$, in row $i$, with $i$ minimal such that $\lambda - \varepsilon_i$ is $p$-regular. Now consider the set $\Gamma$ of all partitions $\mu$ with $\mu\triangleright \lambda - \varepsilon_i$. Clearly we have $\lambda \in \Gamma$ as $\lambda = \lambda - \varepsilon_i + \varepsilon_i$. Now, using (\ref{eq:indhalf}), we have a surjective homomorphism
\begin{equation}\label{eq:indhalfextra}
{\rm ind}_{n-\half}\Delta_{\lambda - \varepsilon_i}^k(n-\half;\delta) \longrightarrow \biguplus_{\mu\in \Gamma}\Delta_\mu^k(n;\delta)
\end{equation}
where the factors in the module $\biguplus_{\mu\in \Gamma}\Delta_\mu^k(n;\delta)$ can be ordered by dominance starting with the most dominant at the top. Note that as $\lambda$ is a partition of $n$, so is every $\mu\in \Gamma$ and hence $\Delta_\mu^k(n;\delta)=S_k^\mu$. Thus the module on the righthand side of (\ref{eq:indhalfextra}) is a $k\sym_n$-module, trivially inflated to $P_n^k(\delta)$. Now this module will decompose according to the block structure of $k\sym_n$. We claim that $\lambda$ is the most dominant partition in $\Gamma \cap \mathcal{B}_\lambda(k\sym_n)$. This will imply that $\Delta_\lambda^k(n;\delta)$ appears as a quotient of $\biguplus_{\mu\in \Gamma}\Delta_\mu^k(n;\delta)$, and hence we get the required homomorphism by composing the map given in (\ref{eq:indhalfextra}) with the projection onto that quotient. It remains to prove that $\lambda$ is indeed the most dominant partition in $\Gamma \cap \mathcal{B}_\lambda(k\sym_n)$. If $i=1$ we are done. Now suppose that $i>1$.
By our choice of row $i$, if we remove any node of $\lambda$ in an earlier row, say $j<i$, then the resulting partition $\lambda - \varepsilon_j$ is not $p$-regular. Using this, and the fact that $\lambda$ is $p$-regular, we deduce that $\lambda$ and $\lambda - \varepsilon_i$ must be of the form
\begin{align*}
  \lambda &= (b^a, (b-1)^{p-1}, (b-2)^{p-1}, \ldots , (b-t+1)^{p-1}, (b-t)^{p-1}, \ldots), \,\, \mbox{and}\\
  \lambda - \varepsilon_i &= (b^a, (b-1)^{p-1}, (b-2)^{p-1}, \ldots , (b-t+1)^{p-1}, (b-t)^{p-2}, b-t-1, \ldots)
\intertext{
for some $1<a<p$, $b>0$ and $t\geq 0$. Now the partitions $\mu\in \Gamma$ with $\mu \succ \lambda$ are precisely the partitions}
  \mu^{(0)} &= (b+1, b^{a-1}, (b-1)^{p-1}, (b-2)^{p-1}, \ldots , (b-t+1)^{p-1}, (b-t)^{p-2},b-t-1, \ldots),\\
  \mu^{(1)} &= (b^{a+1}, (b-1)^{p-2}, (b-2)^{p-1}, \ldots , (b-t+1)^{p-1}, (b-t)^{p-2}, b-t-1, \ldots),\\
  \mu^{(2)} &= (b^a, (b-1)^p, (b-2)^{p-2},  \ldots , (b-t+1)^{p-1}, (b-t)^{p-2}, b-t-1,\ldots),\\ 
  \ldots & \\
  \mu^{(t)} &=  (b^a, (b-1)^{p-1}, (b-2)^{p-1}, \ldots , (b-t+1)^{p}, (b-t)^{p-3}, b-t-1, \ldots).
\end{align*}
Now it is easy to see from Theorem \ref{thm:nakayama} (or the reformulation given in Theorem \ref{newnakayama}) that none of these partitions belong to $\mathcal{B}_\lambda(k\sym_n)$.
\end{proof}

For any $\lambda\in \Lambda_{\leq n}$, recall that we denote by $\hat{\lambda}$ the $(n+1)$-tuple
$$\hat{\lambda}=(-|\lambda|, \lambda_1, \lambda_2, \ldots, \lambda_n)$$
where $\lambda_i=0$ for $l(\lambda)<i\leq n$. For $\delta\in k$ we also define the $(n+1)$-tuple
$$\rho_n(\delta)=(\delta , -1, -2, -3, \ldots , -n).$$
Note that both $\hat{\lambda}$ and $\rho_n(\delta)$ can be viewed as $(n+1)$-tuples of elements in $k$. For any such $(n+1)$-tuples $x$ and $y$ of elements of $k$, say $x=(x_0, x_1, \ldots , x_n)$ and $y=(y_0, y_1, \ldots , y_n)$, we write
$$x\sim_k y$$
if and only if there exists a permutation $\sigma\in \sym_{n+1}$ such that $x_i = y_{\sigma(i)}$ for all $0\leq i\leq n$.
With this definition, we can now state the following theorem.

\begin{thm}\label{thm:blocks<orbits}
	Let $\lambda,\mu\in\Lambda_{\leq n}$ with $\lambda\in \Lambda_{\leq n}^*$.  If $[\Delta_\mu^k(n;\delta) : L_\lambda^k(n;\delta)]\neq 0$, then $\hat{\mu}+\rho_n(\delta) \sim_k \hat{\lambda} + \rho_n(\delta)$.
\end{thm}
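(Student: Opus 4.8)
The plan is to argue by induction on $n$: reduce to the case $\lambda\vdash n$, use the surjection of Lemma~\ref{lem:surjhom} together with the branching rules to pass to the half-partition algebra, transport across the Morita equivalence of Theorem~\ref{prop:morita} to land back at an integer level, apply the inductive hypothesis, and finally use the numerical constraint of Theorem~\ref{thm:partdeltapblocks} to control the single discrepancy that appears when climbing back up.

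First I would reduce to $\lambda\vdash n$. Since $t=|\lambda|-|\mu|\ge 0$ by Theorem~\ref{thm:partdeltapblocks}, we have $|\mu|\le|\lambda|$, so the exact localisation functor $F_{n,|\lambda|}$ of \eqref{eq:partfn} sends $\Delta_\lambda^k(n)\mapsto S^\lambda_k$, $\Delta_\mu^k(n)\mapsto\Delta_\mu^k(|\lambda|)\neq 0$ and $L_\lambda^k(n)\mapsto L_\lambda^k(|\lambda|)\neq 0$, preserving the multiplicity $[\Delta_\mu:L_\lambda]$. As $\hat\nu+\rho_n(\delta)$ and $\hat\nu+\rho_{|\lambda|}(\delta)$ differ only by the common tail $-(|\lambda|+1),\dots,-n$, the assertion $\hat\mu+\rho_n(\delta)\sim_k\hat\lambda+\rho_n(\delta)$ is equivalent at the two levels, so we may assume $n=|\lambda|$ and $\mu\vdash n-t$. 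The base case is $t=0$: then $\Delta_\mu^k(n)=S^\mu_k$ and $\Delta_\lambda^k(n)=S^\lambda_k$ with $J_n^{(n-1)}$ acting as zero, so $[\Delta_\mu^k(n):L_\lambda^k(n)]=[S^\mu_k:D^\lambda_k]\neq 0$ forces $\lambda$ and $\mu$ to share a $p$-core by Theorem~\ref{nakayama}; Theorem~\ref{newnakayama} then gives $\lambda+\rho_n\sim_p\mu+\rho_n$, and since the extra entry $\delta-n$ is common to $\hat\lambda+\rho_n(\delta)$ and $\hat\mu+\rho_n(\delta)$ the required $\sim_k$ follows.

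For the inductive step assume $t\ge 1$. Lemma~\ref{lem:surjhom} provides a $p$-regular $\lambda-\varepsilon_i$ and a surjection $\mathrm{ind}_{n-\half}\Delta_{\lambda-\varepsilon_i}^k(n-\half)\twoheadrightarrow\Delta_\lambda^k(n)\twoheadrightarrow L_\lambda^k(n)$. As $\mathrm{ind}_{n-\half}$ is left adjoint to $\mathrm{res}_n$, this gives a nonzero map $\Delta_{\lambda-\varepsilon_i}^k(n-\half)\to\mathrm{res}_n L_\lambda^k(n)$, and since $\Delta_{\lambda-\varepsilon_i}^k(n-\half)$ has simple head $L_{\lambda-\varepsilon_i}^k(n-\half)$, that simple is a composition factor of $\mathrm{res}_n L_\lambda^k(n)$. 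Exactness of $\mathrm{res}_n$ and $[\Delta_\mu^k(n):L_\lambda^k(n)]\neq 0$ then make $L_{\lambda-\varepsilon_i}^k(n-\half)$ a composition factor of $\mathrm{res}_n\Delta_\mu^k(n)$, whose constituents, by the branching rule \eqref{eq:resn}, all come from $\Delta_\mu^k(n-\half)$ or from $\Delta_\nu^k(n-\half)$ with $\nu\triangleleft\mu$. Hence $[\Delta_\sigma^k(n-\half;\delta):L_{\lambda-\varepsilon_i}^k(n-\half;\delta)]\neq 0$ for some $\sigma\in\{\mu\}\cup\{\nu:\nu\triangleleft\mu\}$. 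Applying Theorem~\ref{prop:morita}, which identifies $\Delta_\tau^k(n-\half;\delta)$ with $\Delta_\tau^k(n-1;\delta-1)$, turns this into $[\Delta_\sigma^k(n-1;\delta-1):L_{\lambda-\varepsilon_i}^k(n-1;\delta-1)]\neq 0$, to which the inductive hypothesis applies; we conclude $\widehat{\lambda-\varepsilon_i}+\rho_{n-1}(\delta-1)\sim_k\hat\sigma+\rho_{n-1}(\delta-1)$.

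It remains to climb back to level $n$, and this is the main obstacle. A direct computation shows that $\hat\lambda+\rho_n(\delta)$ is obtained from $\widehat{\lambda-\varepsilon_i}+\rho_{n-1}(\delta-1)$ by raising the entry in position $i$ by $1$ and appending one further entry (the mark $\delta-n$ being preserved), and that $\hat\mu+\rho_n(\delta)$ is obtained from $\hat\sigma+\rho_{n-1}(\delta-1)$ by an analogous one-entry modification; but the two modifications differ according to whether $\sigma=\mu$ or $\sigma=\nu\triangleleft\mu$. The level-$(n-1)$ equivalence just obtained identifies the two underlying multisets, so the two level-$n$ multisets can differ only through these single modifications. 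Equality of first power sums is automatic (both equal $\delta-\binom{n+1}{2}$), but the content relation \eqref{eq:deltappair} of Theorem~\ref{thm:partdeltapblocks}, which is equivalent to equality of the second power sums, pins down exactly where the modifications act and forces the two level-$n$ multisets to coincide, yielding $\hat\mu+\rho_n(\delta)\sim_k\hat\lambda+\rho_n(\delta)$. The delicate points are the bookkeeping of the extremal row $i=n$ (the case $\lambda=(1^n)$), the characteristic $p=2$ case (where the second power sum carries no information, so a separate matching argument is required), and keeping the shifted parameter $\delta-1$ admissible along the induction; each of these must be handled with care.
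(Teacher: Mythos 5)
Your overall strategy coincides with the paper's: localise so that $\lambda\vdash n$, induct on $n$, use Lemma \ref{lem:surjhom} and adjunction to pass to $P_{n-\half}^k(\delta)$, invoke the branching rule \eqref{eq:resn} to land on some $\sigma\in\{\mu\}\cup\{\nu:\nu\triangleleft\mu\}$, transport through the Morita equivalence of Theorem \ref{prop:morita} to $P_{n-1}^k(\delta-1)$, and apply the inductive hypothesis. All of that is sound: your composition-factor phrasing of the descent is an acceptable variant of the paper's Hom-space argument, and your separate treatment of $t=0$ via Theorem \ref{thm:decompsamesize} and Nakayama is valid, though heavier than necessary --- the paper folds $t=0$ into the induction (Case 1 is then excluded by cellularity, since it would force a negative degree difference) and thereby keeps Section 6 independent of the modular representation theory of $\sym_n$.

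The genuine gap is in the final matching step. As you correctly observe, the two level-$n$ multisets are obtained from the two (equal, by induction) level-$(n-1)$ multisets by appending $-n$ and incrementing a single entry, say $a$ on the $\lambda$-side and $b$ on the $\mu$-side, and multiset equality at level $n$ is equivalent to $a=b$ in $k$. But the level-$n$ relation \eqref{eq:deltappair} for $(\lambda,\mu)$ only encodes equality of second power sums up to a factor of $2$: the difference of the second power sums of $\hat\lambda+\rho_n(\delta)$ and $\hat\mu+\rho_n(\delta)$ equals $-2$ times the left-hand side of \eqref{eq:deltappair}, and the increment computation turns that equality into $2a=2b$, not $a=b$. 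So your argument proves nothing when $p=2$; you flag this and say a ``separate matching argument is required'' without supplying one, which leaves the proof incomplete for $p=2$. The fix is already in your hands and requires no new input: Theorem \ref{thm:partdeltapblocks} also applies to the pair $(\lambda-\varepsilon_i,\sigma)$ at level $n-1$ with parameter $\delta-1$, since that is exactly the non-vanishing decomposition number you produced before invoking induction. Subtracting that relation from the level-$n$ relation for $(\lambda,\mu)$ yields $\mathrm{ct}(\varepsilon_i)=\delta-|\mu|$ when $\sigma=\mu$ and $\mathrm{ct}(\varepsilon_i)=\mathrm{ct}(\varepsilon_j)$ when $\sigma=\mu-\varepsilon_j$, i.e. $a=b$ on the nose with no factor of $2$; this is the paper's route (equations \eqref{eq:partcase1cong} and \eqref{eq:partcase2cong}) and is valid in every characteristic, including $p=2$ and including your extremal case $\lambda=(1^n)$.
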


	\begin{proof}
Note that, as in the proof of Theorem \ref{thm:partdeltapblocks}, if $[\Delta_\mu^k(n;\delta):L_\lambda^k(n;\delta)]\neq 0$ then there exists a submodule $M$ of $\Delta_\mu^k(n;\delta)$ and a non-zero homomorphism
\begin{equation}\label{nonzerohom1}
\Delta_\lambda^k(n;\delta) \longrightarrow \Delta_\mu^k(n;\delta)/M.
\end{equation}

		By use of the localisation functors given in \eqref{eq:partfn} and cellularity   we may assume that $|\lambda|= n$ and $|\mu|= n-t$ for some $t\geq0$. We prove the result by induction on $n$.
		
		If $n=0$ there is nothing to prove, so assume $n\geq1$. If $\lambda=\emptyset$  we must also have $\mu=\emptyset$, and the result holds trivially.
		
		If now $|\lambda|\geq1$, then $\lambda$ has a removable node. Pick the removable node satisfying the conditions of Lemma \ref{lem:surjhom}. Then we have a surjective homomorphism
\begin{equation}\label{nonzerohom2}
			\mathrm{ind}_{n-\half}\Delta_{\lambda-\varepsilon_i}^k(n-\textstyle\half;\delta)\longrightarrow\Delta_\lambda^k(n;\delta).
\end{equation}
Composing the homomorphisms (\ref{nonzerohom1}) and (\ref{nonzerohom2}) we obtain a non-zero homomorphism
$$\mathrm{ind}_{n-\half}\Delta_{\lambda-\varepsilon_i}^k(n-\textstyle\half;\delta)\longrightarrow  \Delta_\mu^k(n;\delta)/M.$$
Now by Frobenius reciprocity we have
		\begin{align*}
			&\mathrm{Hom}(\mathrm{ind}_{n-\half}\Delta_{\lambda-\varepsilon_i}^k(n-\textstyle\half;\delta),\Delta_\mu^k(n;\delta)/M) 
			\cong \mathrm{Hom}(\Delta_{\lambda-\varepsilon_i}^k(n-\textstyle\half;\delta),\mathrm{res}_{n}(\Delta_\mu^k(n;\delta)/M))\neq0.
		\end{align*}
		Using  the branching rule \eqref{eq:resn} we have either
			\[\mathrm{Hom}(\Delta_{\lambda-\varepsilon_i}^k(n-\textstyle\half;\delta),\Delta_\mu^k(n-\half;\delta)/N)\neq0\]
		for some submodule $N\subset\Delta_\mu^k(n-\half;\delta)$, or
			\[\mathrm{Hom}(\Delta_{\lambda-\varepsilon_i}^k(n-\textstyle\half;\delta),\Delta_{\mu-\varepsilon_j}^k(n-\half;\delta)/Q)\neq0\]
		for some removable node $\varepsilon_j$ in row $j$ of $\mu$ say, and some submodule $Q\subset\Delta_{\mu-\varepsilon_j}^k(n-\half;\delta)$. \\
		Applying Theorem \ref{prop:morita} we have the following two cases:
		\begin{description}\setlength{\itemindent}{0cm}
			\item[Case 1] $\mathrm{Hom}(\Delta_{\lambda-\varepsilon_i}^k(n-1;\delta-1),\Delta_\mu^k(n-1;\delta-1)/N)\neq0$ for some submodule $N\subset\Delta_\mu^k(n-1;\delta-1)$, and so $[\Delta_\mu^k(n-1;\delta -1):L_{\lambda - \varepsilon_i}^k(n-1;\delta -1)]\neq 0$, or
			\item[Case 2] $\mathrm{Hom}(\Delta_{\lambda-\varepsilon_i}^k(n-1;\delta-1),\Delta_{\mu-\varepsilon_j}^k(n-1;\delta-1)/Q)\neq0$ for some removable node $\varepsilon_j$ in row $j$ of $\mu$, and some submodule $Q\subset\Delta_{\mu-\varepsilon_j}^k(n-1;\delta-1)$, and so $[\Delta_{\mu - \varepsilon_j}^k(n-1;\delta -1):L_{\lambda - \varepsilon_i}^k(n-1;\delta -1)]\neq 0$.
		\end{description}
We consider each case in turn.\\
		\textbf{Case 1}~~Applying our inductive step, we have that  $\hat\mu+\rho_{n-1}(\delta-1)\sim_p\widehat{\lambda-\varepsilon_i}+\rho_{n-1}(\delta-1)$, that is
			\begin{equation}(\delta-1-|\mu|,\mu_1-1,\dots,\mu_{n-1}-n+1)\sim_k(\delta-|\lambda|,\lambda_1-1,\dots,\lambda_i-i-1,\dots,\lambda_{n-1}-n+1). \label{eq:partcase1seq}\end{equation}
	As $|\lambda-\varepsilon_i|-|\mu|=t-1$, we also know from Theorem \ref{thm:partdeltapblocks} that $\lambda-\varepsilon_i$ and $\mu$ satisfy 
		\[(t-1)(\delta-1)-(t-1)|\mu|-\mathrm{ct}(\lambda)+\mathrm{ct}(\varepsilon_i)+\mathrm{ct}(\mu)-\frac{(t-1)(t-2)}{2}=0\]
over the field $k$.
		Hence we can deduce that 
\begin{equation}\label{deltapair1}
t\delta-t|\mu|-\mathrm{ct}(\lambda)+\mathrm{ct}(\mu)-\frac{t(t-1)}{2}+\mathrm{ct}(\varepsilon_i)+|\mu|-\delta = 0
\end{equation}
in the field $k$.
		Moreover by assumption and by Theorem \ref{thm:partdeltapblocks} we have that $\lambda$ and $\mu$ satisfy
\begin{equation}\label{deltapair2}
t\delta - t|\mu| -\mathrm{ct}(\lambda)+\mathrm{ct}(\mu) -\frac{t(t-1)}{2} = 0.
\end{equation}
It follows from equation (\ref{deltapair1}) and (\ref{deltapair2}) that 

		\begin{equation}\mathrm{ct}(\varepsilon_i)=\lambda_i-i = \delta-|\mu|\label{eq:partcase1cong}\end{equation}
in the field $k$.

		Combining \eqref{eq:partcase1seq} and \eqref{eq:partcase1cong}, the sequences 
		\[\hat\lambda+\rho_n(\delta)=(\delta-|\lambda|,\lambda_1-1,\dots,\lambda_i-i,\dots,\lambda_n-n)\]
		and
		\[\hat\mu+\rho_n(\delta)=(\delta-|\mu|,\mu_1-1,\dots,\mu_n-n)\]
		satisfy $\hat{\lambda} + \rho_n(\delta) \sim_k \hat{\mu} + \rho_n(\delta)$ as required.
		\\\\
		\textbf{Case 2}~~Applying our inductive step, we have that  $\widehat{\mu-\varepsilon_j}+\rho_{n-1}(\delta-1)\sim_k\widehat{\lambda-\varepsilon_i}+\rho_{n-1}(\delta-1)$, that is
			\begin{equation}(\delta-|\mu|,\mu_1-1,\dots,\mu_j-j-1,\dots,\mu_{n-1}-n+1)\sim_k(\delta-|\lambda|,\lambda_1-1,\dots,\lambda_i-i-1,\dots,\lambda_{n-1}-n+1).\label{eq:partcase2seq}\end{equation}
		Since $|\lambda-\varepsilon_i|-|\mu-\varepsilon_j|=t$, we also know from Theorem \ref{thm:partdeltapblocks} 
		\[t(\delta-1)-t(|\mu|-1)-\mathrm{ct}(\lambda)+\mathrm{ct}(\varepsilon_i)+\mathrm{ct}(\mu)-\mathrm{ct}(\varepsilon_j)-\frac{t(t-1)}{2}=0\]
in the field $k$.
		Moreover by assumption and Theorem \ref{thm:partdeltapblocks}, we have that $\lambda$ and $\mu$ satisfy (\ref{deltapair2}). It follows that 
		\[\mathrm{ct}(\varepsilon_i) = \mathrm{ct}(\varepsilon_j),\]
		that is,
		\begin{equation}\lambda_i-i = \mu_j-j\label{eq:partcase2cong}\end{equation}
in the field $k$.
		Combining \eqref{eq:partcase2seq} and \eqref{eq:partcase2cong}, we get that the sequences 
		\[\hat\lambda+\rho_n(\delta)=(\delta-|\lambda|,\lambda_1-1,\dots,\lambda_i-i,\dots,\lambda_n-n)\]
		and
		\[\hat\mu+\rho_n(\delta)=(\delta-|\mu|,\mu_1-1,\dots,\mu_j-j,\dots,\mu_n-n)\]
		satisfy $\hat{\lambda} +\rho_n(\delta) \sim_k \hat{\mu} + \rho_n(\delta)$ as required.
	\end{proof}
Since the cell-blocks of $P_n^k(\delta)$ are defined as the equivalence classes of the equivalence relation on $\Lambda_{\leq n}$ generated by $[\Delta_\mu^k(n;\delta):L_\lambda^k(n;\delta)]\neq 0$ we immediately obtain the following corollary.
\begin{cor}\label{cor:blocks<orbits}
	Let $\lambda,\mu\in\Lambda_{\leq n}$. If $\mu\in\mathcal{B}_\lambda(P_n^k(\delta))$, then $\hat{\mu}+\rho_n(\delta) \sim_k \hat{\lambda}+\rho_n(\delta).$

\end{cor}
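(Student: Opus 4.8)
The plan is to derive the corollary from Theorem~\ref{thm:blocks<orbits} by a purely formal argument about equivalence relations, so there is no substantive new content to prove. First I would introduce the relation $\approx$ on $\Lambda_{\le n}$ defined by $\lambda \approx \mu$ if and only if $\hat\lambda+\rho_n(\delta)\sim_k\hat\mu+\rho_n(\delta)$, and check that $\approx$ is an equivalence relation. This is immediate from the definition of $\sim_k$: since $x\sim_k y$ asserts the existence of a permutation $\sigma\in\sym_{n+1}$ with $x_i=y_{\sigma(i)}$ for all $i$, reflexivity follows from the identity permutation, symmetry from $\sigma^{-1}$, and transitivity from composing two such permutations. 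Thus $\approx$ partitions $\Lambda_{\le n}$ into classes, and the content of the corollary is exactly that each cell-block is contained in a single $\approx$-class.

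Next I would recall from Section~1 that $\mathcal{B}_\lambda(P_n^k(\delta))$ is, by definition, the $\lambda$-class of the equivalence relation on $\Lambda_{\le n}$ generated by cell-linkage, where two labels are cell-linked precisely when $[\Delta_\nu^k(n;\delta):L_\kappa^k(n;\delta)]\neq0$ with $\kappa\in\Lambda_{\le n}^*$ (the $p$-regular label appearing as the simple composition factor). The crucial observation is that this is exactly the hypothesis of Theorem~\ref{thm:blocks<orbits}: that theorem says each such generating link already satisfies $\hat\nu+\rho_n(\delta)\sim_k\hat\kappa+\rho_n(\delta)$, i.e. $\kappa\approx\nu$. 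Hence every generator of the cell-linkage relation lies inside $\approx$.

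Finally, since $\approx$ is an equivalence relation containing all the generating pairs, and the equivalence relation generated by a set of pairs is the smallest equivalence relation containing them, the entire cell-block relation is contained in $\approx$. In particular $\mu\in\mathcal{B}_\lambda(P_n^k(\delta))$ forces $\lambda\approx\mu$, which is the desired conclusion $\hat\mu+\rho_n(\delta)\sim_k\hat\lambda+\rho_n(\delta)$. There is no genuine obstacle in this deduction; the only point worth verifying carefully is that the $p$-regularity constraint built into the definition of cell-linkage (only simple factors $L_\kappa$ with $\kappa$ $p$-regular occur) matches the standing hypothesis $\lambda\in\Lambda_{\le n}^*$ of Theorem~\ref{thm:blocks<orbits}, so that the theorem does indeed apply to every generating link without exception.
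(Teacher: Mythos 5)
Your proposal is correct and is essentially the paper's own argument: the paper deduces the corollary in one line by observing that the cell-block relation is generated by the links $[\Delta_\mu^k(n;\delta):L_\lambda^k(n;\delta)]\neq 0$, each of which satisfies $\hat\mu+\rho_n(\delta)\sim_k\hat\lambda+\rho_n(\delta)$ by Theorem \ref{thm:blocks<orbits}, while $\sim_k$ is itself an equivalence relation. Your extra care about the $p$-regularity of the label of the simple factor matching the hypothesis of the theorem is the right point to check, and it does match.
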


\section{Blocks for non-integer parameter $\delta$}

In this section, we obtain a description of the blocks of $P^k_n(\delta)$ when $\delta$ does not belong to the prime subfield $\mathbb{F}_p\subset k$. First we recall the following result which holds for an arbitrary $\delta\in k$.

\begin{thm}[{\cite[Corollary 6.2]{hartmann2010cohomological}}] \label{thm:decompsamesize}
		Let $\lambda,\mu\in \Lambda_{\leq n}$ with $\lambda\in \Lambda_{\leq n}^*$.  If $|\lambda|=|\mu|=n-t$ then 
			\[[\Delta_\mu^k(n;\delta):L_\lambda^k(n;\delta)]=[S^\mu_k:D^\lambda_k]\]
where $S^\mu_k$ and $D_k^\lambda$ denote the Specht and simple modules respectively for the symmetric group algebra $k\sym_{n-t}$.
\end{thm}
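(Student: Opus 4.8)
The plan is to apply the exact localisation functor $F_{n,n-t}=e_t(-)$ to transport the computation down to the partition algebra $P_{n-t}^k(\delta)$, where the two partitions $\lambda,\mu$, now of full degree $n-t$, sit at the top of the cell structure and the relevant cell modules and simples are literally inflated from $k\sym_{n-t}$.

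First I would reduce to the top level. Since $|\lambda|=|\mu|=n-t$, both $\lambda$ and $\mu$ lie in $\Lambda_{\le n-t}$, so $F_{n,n-t}(\Delta_\mu^k(n;\delta))\cong\Delta_\mu^k(n-t;\delta)$. As $F_{n,n-t}$ is the idempotent truncation $M\mapsto e_tM$ and is exact, it sends a simple module $L_\nu^k(n;\delta)$ to $L_\nu^k(n-t;\delta)$ when $\nu\in\Lambda_{\le n-t}^*$ and to $0$ otherwise; in particular $F_{n,n-t}(L_\lambda^k(n;\delta))\cong L_\lambda^k(n-t;\delta)\neq 0$ because $\lambda$ is $p$-regular of degree $n-t$. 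Applying $F_{n,n-t}$ to a composition series of $\Delta_\mu^k(n;\delta)$, the factors $L_\nu^k(n;\delta)$ with $|\nu|>n-t$ are annihilated while those with $|\nu|=n-t$ survive as simple modules, so the multiplicity of the factor $L_\lambda^k$ is unchanged and
\[
[\Delta_\mu^k(n;\delta):L_\lambda^k(n;\delta)]=[\Delta_\mu^k(n-t;\delta):L_\lambda^k(n-t;\delta)].
\]

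Next I would identify this top-level multiplicity with a symmetric group decomposition number. Since $\mu\vdash n-t$, the cell module $\Delta_\mu^k(n-t;\delta)$ is by construction the Specht module $S^\mu_k$ inflated to $P_{n-t}^k(\delta)$ along $P_{n-t}^k(\delta)\twoheadrightarrow P_{n-t}^k(\delta)/J_{n-t}^{(n-t-1)}\cong k\sym_{n-t}$, and likewise $L_\lambda^k(n-t;\delta)$ is the inflation of $D^\lambda_k$. Because $J_{n-t}^{(n-t-1)}$ annihilates $S^\mu_k$ it annihilates every subquotient, so all composition factors of $\Delta_\mu^k(n-t;\delta)$ are inflated from $k\sym_{n-t}$; inflation along this quotient is exact and fully faithful and preserves composition multiplicities, whence $[\Delta_\mu^k(n-t;\delta):L_\lambda^k(n-t;\delta)]=[S^\mu_k:D^\lambda_k]$. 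Combined with the first step this gives the theorem.

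The step I expect to be the main obstacle is the bookkeeping in the localisation: one has to pin down exactly which simples $F_{n,n-t}$ kills and confirm that annihilating the higher-degree composition factors (those $L_\nu$ with $|\nu|>n-t$ forced in by $\nu\succeq\mu$) leaves the multiplicity of $L_\lambda$ untouched. This is the standard behaviour of the quotient functor $e_t(-)$, and it rests on $e_tP_n^k(\delta)e_t\cong P_{n-t}^k(\delta)$ together with the labelling of the simples of $P_{n-t}^k(\delta)$ by $\Lambda_{\le n-t}^*$, which together guarantee that $e_tL_\nu^k(n;\delta)$ is simple for $|\nu|=n-t$ and zero for $|\nu|>n-t$.
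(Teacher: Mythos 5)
The paper does not actually prove this statement: it is imported wholesale from the cited reference (Corollary 6.2 of Hartmann--Henke--K\"onig--Paget), so there is no internal argument to compare yours against. Your proof is the standard one and is correct. The first step works because $F_{n,n-t}=e_t(-)$ is exact, kills $L_\nu^k(n;\delta)$ whenever $|\nu|>n-t$ (such an $L_\nu$ is a quotient of $\Delta_\nu^k(n;\delta)$, which $e_t$ annihilates), and sends $L_\nu^k(n;\delta)$ to $L_\nu^k(n-t;\delta)$ for every $\nu\in\Lambda_{\leq n-t}^*$ --- the nonvanishing here is forced by counting, since $e_tP_n^k(\delta)e_t\cong P_{n-t}^k(\delta)$ has exactly $|\Lambda_{\leq n-t}^*|$ simples and each arises as $e_tL$ for some simple $L$ with $e_tL\neq0$; distinct surviving simples remain distinct, so $[\Delta_\mu^k(n;\delta):L_\lambda^k(n;\delta)]=[\Delta_\mu^k(n-t;\delta):L_\lambda^k(n-t;\delta)]$. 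The second step is simply that inflation along $P_{n-t}^k(\delta)\twoheadrightarrow P_{n-t}^k(\delta)/J_{n-t}^{(n-t-1)}\cong k\sym_{n-t}$ is an exact, fully faithful embedding preserving composition multiplicities, and that $\Delta_\mu^k(n-t;\delta)$ and $L_\lambda^k(n-t;\delta)$ are the inflations of $S^\mu_k$ and $D^\lambda_k$. The one caveat worth flagging is that the idempotents $e_t=\delta^{-t}p_1\cdots p_t$, and hence your localisation functors, exist only when $\delta\neq0$, whereas the paper explicitly records that the cited result holds for \emph{arbitrary} $\delta\in k$; your argument therefore does not recover the $\delta=0$ case. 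This is immaterial for the paper's applications, where $\delta\neq0$ is a standing assumption, but it is part of what the citation buys and would need a separate argument if you wanted the full statement.
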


Using Corollary \ref{cor:blocks<orbits} and Theorem \ref{thm:decompsamesize} we can now deduce the following theorem.

\begin{thm} \label{blocksnoninteger}
Assume $\delta\notin \mathbb{F}_p$. Let $\lambda, \mu\in \Lambda_{\leq n}$ then the following propositions are equivalent.
\begin{enumerate}
\item $\mu\in \mathcal{B}_\lambda(P^k_n(\delta))$.
\item $|\mu|=|\lambda|=n-t$ for some $t\geq 0$ and $\mu\in \mathcal{B}_\lambda(k\sym_{n-t})$.
\item $|\mu|=|\lambda|=n-t$ for some $t\geq 0$ and $\lambda + \rho_{n-t} \sim_p \mu + \rho_{n-t}$.
\end{enumerate}
\end{thm}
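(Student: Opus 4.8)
The plan is to prove the cycle of implications $(1) \Rightarrow (2) \Rightarrow (3) \Rightarrow (1)$, where the equivalence $(2) \Leftrightarrow (3)$ is essentially free from Theorem \ref{newnakayama} applied to the symmetric group $k\sym_{n-t}$ (recalling that $\rho_{n-t}$ and the relation $\sim_p$ are exactly the ingredients in that reformulation of Nakayama's Conjecture). So the mathematical content lies in $(1) \Rightarrow (2)$ and in $(3) \Rightarrow (1)$.

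For $(1) \Rightarrow (2)$, I would first reduce to the case where $\mu$ is linked to $\lambda$ by a single nonzero decomposition number, say $[\Delta_\mu^k(n;\delta) : L_\nu^k(n;\delta)] \neq 0$ for some $p$-regular $\nu$, and then propagate along the linkage chain. The key leverage is Corollary \ref{cor:blocks<orbits}, which tells us that $\hat\mu + \rho_n(\delta) \sim_k \hat\lambda + \rho_n(\delta)$. I would compare the first (zeroth) entries: $\hat\lambda + \rho_n(\delta)$ has $0$-entry equal to $\delta - |\lambda|$ and all other entries of the form $\lambda_i - i$, which are integers; similarly for $\mu$. The crucial point is that because $\delta \notin \mathbb{F}_p$, the entry $\delta - |\lambda|$ is \emph{not} congruent to any integer modulo $p$ (it lies in a coset of $\mathbb{F}_p$ in $k$ that contains none of the integer entries). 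Hence in the permutation $\sigma$ realising $\sim_k$, the $0$-entry $\delta - |\lambda|$ must be matched with the $0$-entry $\delta - |\mu|$, forcing $\delta - |\lambda| = \delta - |\mu|$ in $k$ and therefore $|\lambda| = |\mu|$ (as integers, since they are bounded by $n < p$ considerations—more carefully, as elements of the prime subfield their difference being zero means equality once we know both are honest integers realised in $k$). This gives $|\mu| = |\lambda| = n - t$. Once the degrees agree, Theorem \ref{thm:decompsamesize} identifies the partition-algebra decomposition number with the symmetric-group decomposition number $[S_k^\mu : D_k^\nu]$, so the cell-linkage within the partition algebra forces $\sym_{n-t}$-cell-linkage, giving $\mu \in \mathcal{B}_\lambda(k\sym_{n-t})$.

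For $(3) \Rightarrow (1)$, equivalently $(2) \Rightarrow (1)$, I would use Theorem \ref{thm:decompsamesize} in the reverse direction. If $|\mu| = |\lambda| = n-t$ and $\mu, \lambda$ lie in the same $k\sym_{n-t}$-block, then (since symmetric-group blocks are generated by cell-linkage, and decomposition numbers agree by Theorem \ref{thm:decompsamesize}) the corresponding partition-algebra cell modules $\Delta_\mu^k(n;\delta)$ and $\Delta_\lambda^k(n;\delta)$ share composition factors $L_\nu^k(n;\delta)$ with $\nu$ ranging over the relevant $p$-regular partitions of $n-t$. Chaining these shared factors along the $\sym_{n-t}$-linkage produces a chain of nonzero partition-algebra decomposition numbers, placing $\mu$ and $\lambda$ in the same cell-block $\mathcal{B}_\lambda(P_n^k(\delta))$.

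The main obstacle I anticipate is making the degree-matching argument in $(1) \Rightarrow (2)$ fully rigorous: I must argue carefully that the hypothesis $\delta \notin \mathbb{F}_p$ genuinely separates the $0$-entry from all integer entries under $\sim_k$. The subtlety is that $\sim_k$ is equality in $k$ of multisets, and one has to rule out, for \emph{every} pair in the chain, that $\delta - |\lambda|$ coincides in $k$ with some integer $\lambda_j - j$; this holds precisely because $\delta - |\lambda| \in \delta + \mathbb{F}_p$ while every integer entry lies in $\mathbb{F}_p$, and these cosets are disjoint exactly when $\delta \notin \mathbb{F}_p$. Once this separation is secured, the equality of the $0$-entries across the relation is forced, and the remainder of the argument is bookkeeping via Theorem \ref{thm:decompsamesize} and the definition of cell-blocks.
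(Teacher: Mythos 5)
Your reduction of the theorem to the single claim ``$\mu\in \mathcal{B}_\lambda(P^k_n(\delta))$ implies $|\mu|=|\lambda|$'' is the right strategy, and your coset-separation observation is correct as far as it goes: since every entry $\lambda_i-i$ of $\hat\lambda+\rho_n(\delta)$ lies in $\mathbb{F}_p$ while the zeroth entry $\delta-|\lambda|$ lies in the disjoint coset $\delta+\mathbb{F}_p$, any permutation realising $\sim_k$ must match the zeroth entries, so $\delta-|\lambda|=\delta-|\mu|$ in $k$. But this only yields $|\lambda|\equiv|\mu| \pmod p$, i.e.\ $|\lambda|=|\mu|+sp$ for some $s\ge 0$, and your passage from this congruence to actual equality of integers is a genuine gap. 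There is no hypothesis $n<p$ anywhere in the theorem, and two distinct integers in $\{0,1,\dots,n\}$ can certainly have equal images in $k$ when $n\ge p$; the parenthetical ``honest integers realised in $k$'' argument is circular. Moreover the gap cannot be closed by squeezing more out of Corollary \ref{cor:blocks<orbits}: the orbit condition $\hat\mu+\rho_n(\delta)\sim_k\hat\lambda+\rho_n(\delta)$ genuinely admits pairs with $|\lambda|-|\mu|=p$ (e.g.\ obtained by moving a bead up a runner of the abacus), so the theorem is strictly stronger than the necessary condition of Section 6 and cannot be deduced from it alone.

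The paper closes exactly this gap with an induction on $n$. Assuming $s>0$ for a contradiction, one picks a removable node $\varepsilon_i$ of $\lambda$ as in Lemma \ref{lem:surjhom}, applies Frobenius reciprocity, the branching rules, and the Morita equivalence of Theorem \ref{prop:morita} (precisely the mechanism of the proof of Theorem \ref{thm:blocks<orbits}) to land in one of two cases over $P_{n-1}^k(\delta-1)$: either $[\Delta_\mu^k(n-1;\delta-1):L_{\lambda-\varepsilon_i}^k(n-1;\delta-1)]\neq 0$, which is impossible because $|\lambda-\varepsilon_i|-|\mu|=sp-1\not\equiv 0\pmod p$ and $\delta-1\notin\mathbb{F}_p$, or $[\Delta_{\mu-\varepsilon_j}^k(n-1;\delta-1):L_{\lambda-\varepsilon_i}^k(n-1;\delta-1)]\neq 0$, whence the inductive hypothesis gives $|\lambda-\varepsilon_i|=|\mu-\varepsilon_j|$ and hence $|\lambda|=|\mu|$, contradicting $s>0$. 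You need to supply this (or an equivalent) argument; the rest of your proposal --- the equivalence of (2) and (3) via Theorem \ref{newnakayama} and both directions of the bridge between (1) and (2) via Theorem \ref{thm:decompsamesize} once the degrees are known to agree --- matches the paper and is fine.
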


\begin{proof}
We have already seen in Theorem \ref{newnakayama} that (2) and (3) are equivalent. We will now show that (1) and (2) are equivalent. From Theorem \ref{thm:decompsamesize} we have that (2) implies (1). We will now show that (1) implies (2). In fact, using Theorem \ref{thm:decompsamesize}, all we need to show is that if $\mu\in \mathcal{B}_\lambda(P_n^k(\delta))$ then $|\mu|=|\lambda|$.

Let $\lambda, \mu\in \Lambda_{\leq n}$ with $\lambda\in \Lambda_{\leq n}^*$ satisfying $[\Delta_\mu^k(n;\delta):L_\lambda^k(n,\delta)]\neq 0$.  We can assume that $\lambda \vdash n$ and $\mu\vdash n-t$ for some $t\geq 0$. We know from Corollary \ref{cor:blocks<orbits} that $\hat{\lambda}+\rho_n(\delta) \sim_k \hat{\mu} +\rho_n(\delta)$. Now, as $\delta\notin \mathbb{F}_p$ we must have $|\lambda|=|\mu| +sp$ for some $s\geq 0$. We will show by induction on $n$ that we must have $s=0$.

If $n=1$ then there is nothing to prove as $|\lambda|, |\mu|\leq 1$.

Now assume that $n>1$, and suppose for a contradiction that $s>0$. Following the same argument as in the proof of Theorem \ref{thm:blocks<orbits} we know that $\lambda$ has a removable node $\varepsilon_i$ with $\lambda -\varepsilon_i \in \Lambda_{\leq n-1}^*$ satisfying either

\textbf{Case 1:} $[\Delta_\mu^k(n-1;\delta -1):L_{\lambda - \varepsilon_i}^k(n-1;\delta -1)]\neq 0$, or

\textbf{Case 2:}  $[\Delta_{\mu-\varepsilon_j}^k(n-1;\delta -1):L_{\lambda - \varepsilon_i}^k(n-1;\delta -1)]\neq 0$, for some removable node $\varepsilon_j$.

Note that $\delta - 1\notin \mathbb{F}_p$. Moreover, as $|\lambda - \varepsilon_i|-|\mu| = sp-1$ we see that Case 1 is impossible. Now from Case 2 we obtain, by induction that $|\lambda - \varepsilon_i|=|\mu - \varepsilon_j|$, and so $|\lambda|=|\mu|$ as required.
\end{proof}

\section{Blocks for integer parameter $\delta$}

In Section 7, we obtained a complete description of the (cell-)blocks of $P_n^k(\delta)$ when $\delta\notin \mathbb{F}_p$.
We will now consider the (cell-)blocks when $\delta \in \mathbb{F}_p$ with $\delta\neq 0$. Note that Corollary \ref{cor:blocks<orbits} holds for any $\delta \in k$ (with $\delta\neq 0$) and provides a necessary condition for cell-blocks. We will show that in fact, when $\delta \in \mathbb{F}_p$, this is also a sufficient condition. We now make this more precise.

Fix $\delta \in \mathbb{F}_p\subset k$ with $\delta\neq 0$. Then for any partition $\lambda$, the element $\hat{\lambda}+\rho_n(\delta)$ can be viewed as a sequence in $\mathbb{F}_p$. For such sequences we will write $\sim_p$  instead of $\sim_k$ as this relation only depends on the prime number $p$, recovering the definition introduced in Section 3.

\begin{defn} For $\lambda \in \Lambda_{\leq n}$, we define $\mathcal{O}_\lambda^p(n;\delta)$ to be the set of all $\mu\in \Lambda_{\leq n}$ satisfying $\hat{\mu}+\rho_n(\delta) \sim_p \hat{\lambda}+\rho_n(\delta)$.
\end{defn}

Using this definition, Corollary \ref{cor:blocks<orbits} can be rephrased as follows: If $\mu\in \mathcal{B}_\lambda(P_n^k(\delta))$ then $\mu\in \mathcal{O}^p_\lambda(n;\delta)$.
In this section we will show that the converse holds.

In order to do that, we introduce the notion of a $\delta$-marked abacus corresponding to each partition. Let $\lambda\in\Lambda_{\leq n}$ and choose $b\in\mathbb{N}$ satisfying $b\geq n$. We view $\hat\lambda$ as a $(b+1)$-tuple by adding zeros, and take $\rho_b(\delta)$ to the $(b+1)$-tuple
	\[\rho_b(\delta)=(\delta,-1,-2,\dots,-b).\]
We can then define the $\beta_\delta$-sequence of $\lambda$ to be
	\begin{align*}
		\beta_\delta(\lambda,b)&=\hat\lambda+\rho_b(\delta)+b(\underbrace{1,1,\dots,1}_{b+1})\\
						&=(\delta-|\lambda|+b,\lambda_1-1+b,\lambda_2-2+b,\lambda_3-3+b,\dots,2,1,0).
	\end{align*}
It is clear that $\mu\in \mathcal{O}_\lambda^p(n;\delta)$ if and only if $\beta_\delta(\mu,b)\sim_p\beta_\delta(\lambda,b)$. We then use the  $\beta_\delta$-sequence to construct the \emph{$\delta$-marked abacus} of $\lambda$ as follows:
	\begin{enumerate}
		\item Take an abacus with $p$ runners, labelled $0$ to $p-1$ from left to right. The positions of the abacus start at 0 and increase from left to right, moving down the runners.
		\item Set $v_\lambda$ to be the unique integer $0\leq v_\lambda\leq p-1$ such that $\beta_\delta(\lambda,b)_0=\delta-|\lambda|+b\equiv v_\lambda$ mod $p$. Place a $\vee$ on top of runner $v_\lambda$.
		\item For the rest of the entries of $\beta_\delta(\lambda,b)$, place a bead in the corresponding position of the abacus, so that the final abacus contains $b$ beads, as in Section 2.
	\end{enumerate}

\begin{example}\label{ex:partabacus}
Let $p=5$, $\delta=1$, $\lambda=(2,1)$. We choose an integer $b\geq3$, for instance $b=7$. Then the $\beta$-sequence is
	\begin{align*}
		\beta_\delta(\lambda,7)&=(1-3+7,2-1+7,\dots,0)\\
						&=(5,8,6,4,3,2,1,0).		
	\end{align*}
	The resulting $\delta$-marked abacus is given in Figure \ref{fig:partabacus}.
	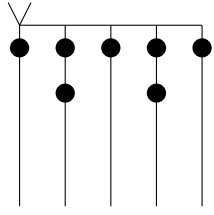
\begin{figure}
		\centering
		\begin{tikzpicture}[scale=0.6]
			\draw (1,4)--(5,4);
			\foreach \x in {1,...,5}
			{\draw (\x,0)--(\x,4);
			\fill[black] (\x,3.5) circle (6pt);}
			\fill[black] (2,2.5) circle (6pt);
			\fill[black] (4,2.5) circle (6pt);
			\draw (0.75,4.5)--(1,4)--(1.25,4.5);
		\end{tikzpicture}
		\caption{The $\delta$-marked abacus of $\lambda$, with $\lambda=(2,1)$, $p=5$, $\delta=1$ and $b=7$.}\label{fig:partabacus}
	\end{figure}
\end{example} 

	Note that if we ignore the $\vee$ we recover James' abacus representing $\lambda$ with $b$ beads explained in Section 2.
If the context is clear, we will use \emph{marked abacus} to mean $\delta$-marked abacus.

Recall the definition of $\Gamma(\lambda,b)$ from \eqref{eq:gammaabacus}. If we now use the marked abacus, we similarly define $\Gamma_\delta(\lambda,b)=(\Gamma_\delta(\lambda,b)_0,\Gamma_\delta(\lambda,b)_1,\dots,\Gamma_\delta(\lambda,b)_{p-1})$ by
	\[\Gamma_\delta(\lambda,b)_i=\begin{cases}
								\Gamma(\lambda,b)_i&\text{if }i\neq v_\lambda\\
								\Gamma(\lambda,b)_i+1&\text{if }i=v_\lambda.
							\end{cases}\]
Now it's easy to see that we have $\mu\in \mathcal{O}^p_\lambda(n;\delta)$ if and only if
				$\Gamma_\delta(\mu,b)=\Gamma_\delta(\lambda,b)$.

We now use the $\delta$-marked abacus to show that set $\mathcal{O}_\lambda^p(n;\delta)$ contains a unique minimal element.

\begin{defn} Let $\lambda\in \Lambda_{\leq n}$.
For  $\mathcal{O}=\mathcal{O}_\lambda^p(n;\delta)$ we define $\lambda_\mathcal{O}$ to be the partition such that
			\begin{itemize}
				\item[(i)] $\Gamma_\delta (\lambda_\mathcal{O},b)=\Gamma_\delta(\lambda,b)$,
				\item[(ii)] All beads on the marked abacus of $\lambda_\mathcal{O}$ are as far up their runners as possible,
				\item[(iii)] The runner $v_{\lambda_\mathcal{O}}$ is the rightmost runner $i$ such that $\Gamma_\delta(\lambda_\mathcal{O},b)_i$ is maximal.
			\end{itemize}
\end{defn}
		The partition $\lambda_\mathcal{O}$ is well defined, i.e. it is independent of the number of beads used. To see this, note that by adding $m$ beads to the abacus we move each existing bead $m$ places to the right. Moreover since $v_{\lambda_\mathcal{O}}\equiv\delta-|\lambda_\mathcal{O}|+b$ mod $p$, we also move the $\vee$ by $m$ places to the right. Therefore none of the beads change their relative positions to one another and the partition $\lambda_\mathcal{O}$ remains unchanged. This is illustrated in Figure \ref{fig:partminwelldef}.
	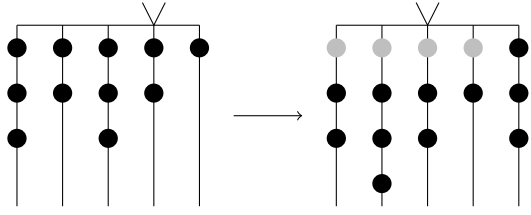
\begin{figure}
		\centering
		\begin{tikzpicture}[scale=0.6]
			\draw (1,4)--(5,4);
			\foreach \x in {1,...,5}
			{\draw (\x,0)--(\x,4);
			\fill[black] (\x,3.5) circle (6pt);}
			\fill[black] (1,2.5) circle (6pt);
			\fill[black] (2,2.5) circle (6pt);
			\fill[black] (3,2.5) circle (6pt);
			\fill[black] (4,2.5) circle (6pt);
			\fill[black] (3,1.5) circle (6pt);
			\fill[black] (1,1.5) circle (6pt);
			\draw (3.75,4.5)--(4,4)--(4.25,4.5);
			
			\draw[->] (5.75,2)--(7.25,2);
			
			\draw (1+7,4)--(5+7,4);
			\foreach \x in {1,...,5}
			{\draw (\x+7,0)--(\x+7,4);
			\fill[black] (\x+7,2.5) circle (6pt);}
			\foreach \x in {1,...,4}
			\fill[lightgray] (\x+7,3.5) circle (6pt);
			\fill[black] (5+7,3.5) circle (6pt);
			\fill[black] (1+7,1.5) circle (6pt);
			\fill[black] (2+7,1.5) circle (6pt);
			\fill[black] (3+7,1.5) circle (6pt);
			\fill[black] (5+7,1.5) circle (6pt);
			\fill[black] (2+7,0.5) circle (6pt);
			\draw (0.75+9,4.5)--(1+9,4)--(1.25+9,4.5);
		\end{tikzpicture}
		\caption{Adding 4 beads (coloured grey) to the abacus of $\lambda_\mathcal{O}=(2,1)$. Each existing bead (coloured black) and the $\vee$ is moved 4 places to the right.}\label{fig:partminwelldef}
	\end{figure}

\begin{prop}\label{prop:partminimal}
		Let $\lambda\in\Lambda_{\leq n}$, then the set $\mathcal{O}=\mathcal{O}_\lambda^p(n;\delta)$ contains a unique element  of minimal degree, namely $\lambda_{\mathcal{O}}$.  More precisely, if $\mu\in\mathcal{O}$ then $|\mu|\geq|\lambda_\mathcal{O}|$, with equality if and only if $\mu=\lambda_\mathcal{O}$.
\end{prop}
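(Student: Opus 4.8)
The plan is to convert the degree $|\mu|$ into abacus bookkeeping and then minimise. Fix $b\geq n$. The genuine beads of $\mu$ sit at the positions $\mu_i-i+b$ $(1\leq i\leq b)$, so
\[
\sum_{i=1}^b(\mu_i-i+b)=|\mu|+\binom{b}{2},
\]
and hence $|\mu|$ equals the total bead position minus the fixed constant $\binom{b}{2}$. Thus minimising $|\mu|$ over $\mu\in\mathcal{O}$ is the same as minimising the total bead position over all marked abaci in the orbit.

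Next I would read off the constraint. Writing $\gamma_i=\Gamma_\delta(\lambda,b)_i$, membership $\mu\in\mathcal{O}$ is exactly $\Gamma_\delta(\mu,b)=\gamma$, which forces the number of genuine beads on runner $i$ to be $\gamma_i$ for $i\neq v_\mu$ and $\gamma_{v_\mu}-1$ on the runner $v_\mu$ carrying the $\vee$ (so necessarily $\gamma_{v_\mu}\geq1$). The orbit is therefore controlled by two choices: the runner $v_\mu$, and, once $v_\mu$ is fixed, the heights of the beads. For a fixed $v_\mu$ the total bead position is minimised by sliding every bead as far up its runner as possible, and this minimum is attained by a \emph{unique} configuration (any bead not pushed up strictly increases the sum). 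A short computation, using $\binom{c}{2}-\binom{c-1}{2}=c-1$ for the runner that loses one bead, then shows that the degree of this pushed‑up configuration is
\[
C-(v_\mu+p\,\gamma_{v_\mu}),
\]
where $C=C(\gamma,b)$ is independent of $v_\mu$.

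Finally I would optimise over $v_\mu$. Since the only dependence on $v_\mu$ is through $-(v_\mu+p\,\gamma_{v_\mu})$, minimising the degree means maximising $v_\mu+p\,\gamma_{v_\mu}$ over runners with $\gamma_{v_\mu}\geq1$. As $0\leq v_\mu\leq p-1$, the quantity $p\,\gamma_{v_\mu}+v_\mu$ behaves like a base‑$p$ expansion: it is maximised by first maximising $\gamma_{v_\mu}$ and breaking ties by the largest (rightmost) such runner, which is precisely condition (iii) defining $\lambda_{\mathcal{O}}$. Moreover $v\mapsto p\,\gamma_v+v$ is injective (equality forces $v\equiv v'\bmod p$, hence $v=v'$), so the optimal runner is unique. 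Combined with the uniqueness of the pushed‑up configuration, this shows the minimal degree is realised by exactly one partition, namely the one with all beads pushed up and the $\vee$ on the rightmost maximal runner, i.e.\ $\lambda_{\mathcal{O}}$. Since $\lambda\in\Lambda_{\leq n}$ lies in $\mathcal{O}$ we get $|\lambda_{\mathcal{O}}|\leq|\lambda|\leq n$, so $\lambda_{\mathcal{O}}\in\Lambda_{\leq n}$.

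The main obstacle I expect is the middle step: realising that relocating the $\vee$ is not a cosmetic relabelling but genuinely alters the bead counts (and so the achievable degree), and then carrying out the bookkeeping that collapses the whole $v_\mu$‑dependence of the degree into the single clean quantity $v_\mu+p\,\gamma_{v_\mu}$. A secondary point to nail down is that every admissible choice of $v_\mu$ with $\gamma_{v_\mu}\geq1$ really produces a genuine orbit element, i.e.\ that the relation $\delta-|\mu|+b\equiv v_\mu\bmod p$ holds automatically; this follows from the orbit invariant $\sum_i\gamma_i\,i-\binom{b}{2}\equiv\delta+b\bmod p$, obtained by evaluating at $\lambda$ itself.
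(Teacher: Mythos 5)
Your argument is correct, but it is genuinely different from the paper's. You minimise the degree in closed form: writing $|\mu|=\sum_j\beta_\delta(\mu,b)_j-\binom{b}{2}$ (summing over the $b$ genuine bead positions), you observe that for a fixed choice of marked runner $v$ the unique minimiser is the pushed-up configuration, whose degree works out to $C-(v+p\,\Gamma_\delta(\lambda,b)_v)$ with $C$ independent of $v$; maximising $v+p\gamma_v$ then forces first the maximality of $\gamma_v$ and then the rightmost tie-break, which is exactly condition (iii) defining $\lambda_{\mathcal{O}}$, and the injectivity of $v\mapsto p\gamma_v+v$ gives uniqueness. Your closing check that every admissible $v$ with $\gamma_v\geq 1$ genuinely yields an orbit element (via the congruence $\sum_i\gamma_i\,i\equiv\delta+b+\binom{b}{2}\bmod p$, verified on $\lambda$ itself) is a real point and you handle it correctly; so is the remark that $|\lambda_{\mathcal{O}}|\leq|\lambda|\leq n$ puts $\lambda_{\mathcal{O}}$ back inside $\Lambda_{\leq n}$. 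The paper instead argues by descent: from any $\mu\neq\lambda_{\mathcal{O}}$ it builds a chain $\mu=\eta^{(0)},\eta^{(1)},\dots,\eta^{(t)}=\lambda_{\mathcal{O}}$ of single abacus moves (push a bead up one step, degree drops by $p$; or, for a $p$-core, slide the lowest bead on the first eligible runner to the right of $v_\eta$ leftwards onto $v_\eta$, degree drops by $j-v_\eta>0$), so that $|\lambda_{\mathcal{O}}|<|\mu|$ strictly. Your approach is more self-contained and makes the role of condition (iii) transparent as the solution of an explicit optimisation; the paper's approach produces less insight here but its chain of elementary moves is precisely what is reused and refined in Proposition \ref{prop:partminblocks} to show each step stays within a cell-block, which your computation does not supply.
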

	\begin{proof}
 Let $\mu\in \mathcal{O}_\lambda^p(n;\delta)$ with $\mu\neq \lambda_{\mathcal{O}}$. Since
		$\Gamma_\delta(\mu,b)=\Gamma_\delta(\lambda,b)$, it's easy to see that there is a sequence of partitions in $\mathcal{O}$  $$\mu=\eta^{(0)} , \eta^{(1)}, \ldots , \eta^{(t)} = \lambda_{\mathcal{O}}$$ for some $t>0$ such that for each $0\leq i\leq t-1$ the partitions $\eta=\eta^{(i)}$ and $\eta'=\eta^{(i+1)}$ are related in precisely one of the following ways.

\textbf{Case 1} The partition $\eta$ is not a $p$-core and the marked abacus of $\eta'$ is obtained from the marked abacus of $\eta$ by pushing a bead one step up its runner. In this case we have $|\eta'|=|\eta|-p$ and so $v_{\eta'}=v_\eta$ and $\eta'\in \mathcal{O}_\eta^k(n;\delta)$.

\textbf{Case 2} The partition $\eta$ is a $p$-core. Then as $\eta\neq \lambda_{\mathcal{O}}$, it does not satisfy condition (iii) above. Now pick the first runner, say runner $j$, to the right of $v_{\eta}$ satisfying $\Gamma_\delta(\eta, b)_{v_{\eta}}\leq \Gamma_\delta(\eta,b)_{j}$. Then the marked abacus of $\eta'$ is obtained from that of $\eta$ by moving the lowest bead on runner $j$ exactly $j-v_{\eta}$ steps to the left to runner $v_{\eta}$.   In this case we have $|\eta'|=|\eta|-(j-v_{\eta})$ and so we have $v_{\eta'}=j$ and $\eta'\in \mathcal{O}_\eta^k(n;\delta)$. This is illustrated in Figure \ref{fig:partminimal}.

In both cases we saw that $|\eta'|<|\eta|$ and so we get $|\lambda_{\mathcal{O}}|<\mu$ as required.
	\begin{figure}
		\centering
		\begin{tikzpicture}[scale=0.6]
			\draw (1,4)--(5,4);
			\foreach \x in {1,...,5}
			{\draw (\x,0)--(\x,4);
			\fill[black] (\x,3.5) circle (6pt);}
			\fill[black] (1,2.5) circle (6pt);
			\fill[black] (2,2.5) circle (6pt);
			\fill[black] (3,2.5) circle (6pt);
			\fill[black] (4,2.5) circle (6pt);
			\fill[black] (3,1.5) circle (6pt);
			\fill[black] (4,1.5) circle (6pt);
			\draw (0.75,4.5)--(1,4)--(1.25,4.5);
			\draw (3,-0.7) node {$\eta$};
			
			\draw[->] (5.75,2)--(7.25,2);
			
			\draw (1+7,4)--(5+7,4);
			\foreach \x in {1,...,5}
			{\draw (\x+7,0)--(\x+7,4);
			\fill[black] (\x+7,3.5) circle (6pt);}
			\fill[black] (1+7,2.5) circle (6pt);
			\fill[black] (2+7,2.5) circle (6pt);
			\fill[black] (3+7,2.5) circle (6pt);
			\fill[black] (4+7,2.5) circle (6pt);
			\fill[black] (1+7,1.5) circle (6pt);
			\fill[black] (4+7,1.5) circle (6pt);
			\draw (0.75+9,4.5)--(1+9,4)--(1.25+9,4.5);
			\draw[->] (9.75,1.5) .. controls (9.5,1.25) and (8.5,1.25) .. (8.25,1.5);
			\draw (3+7,-0.7) node {$\eta'$};

		\end{tikzpicture}
		\caption{Constructing the marked abacus of $\eta'$ from $\eta$ in Case 2.}\label{fig:partminimal}
	\end{figure}
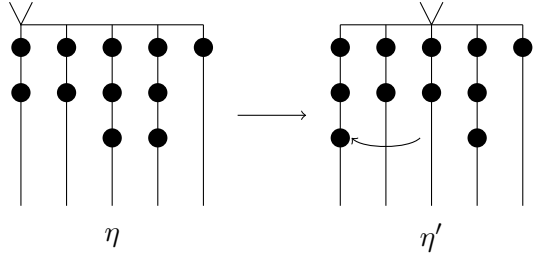
	\end{proof}

The aim now is to show that any partition $\mu\in \mathcal{O}$ is in the same cell-block as $\lambda_\mathcal{O}$. We first need the following proposition to relate cell-blocks over a field of characteristic zero to those over a field of positive characteristic.

\begin{prop}\label{thm:partreduceblocks} Take $\delta\in \mathbb{Z}$ with $0<\delta \leq p-1$ and let $\lambda,\mu\in \Lambda_{\leq n}$.
		If $\mu\in\mathcal{B}_\lambda(P^K_n(\delta+rp))$ for some $r\in\mathbb{Z}$, then $\mu\in\mathcal{B}_\lambda(P_n^k(\delta))$.
		
	\end{prop}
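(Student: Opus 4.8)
The plan is to reduce the statement to a single cell-linkage step and then transport that step from characteristic zero to characteristic $p$ by reduction modulo $\mathfrak{m}$. Since $\mathcal{B}_\lambda(P_n^K(\delta+rp))$ is by definition the equivalence class of $\lambda$ under the relation generated by cell-linkage over $K$, it suffices to prove the following: whenever $\nu,\nu'\in\Lambda_{\leq n}$ satisfy $[\Delta_\nu^K(n;\delta+rp):L_{\nu'}^K(n;\delta+rp)]\neq 0$, the partitions $\nu$ and $\nu'$ lie in the same cell-block of $P_n^k(\delta)$. Chaining such steps along a path from $\lambda$ to $\mu$ then forces $\mu\in\mathcal{B}_\lambda(P_n^k(\delta))$, which is exactly the claim.

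The main tool is the decomposition map. First observe that $P_n^R(\delta+rp)$ is an $R$-order whose base changes are $K\otimes_R P_n^R(\delta+rp)=P_n^K(\delta+rp)$ and $k\otimes_R P_n^R(\delta+rp)=P_n^k(\delta)$, the latter because the image of $\delta+rp$ in $k$ is $\delta$ (as $p\cdot 1_R\in\mathfrak{m}$). Moreover the integral cell module $\Delta_\nu^R(n;\delta+rp)$ is an $R$-form of $\Delta_\nu^K(n;\delta+rp)$ with reduction $k\otimes_R\Delta_\nu^R(n;\delta+rp)\cong\Delta_\nu^k(n;\delta)$. Hence the decomposition map $d$ between the Grothendieck groups of $P_n^K(\delta+rp)$ and $P_n^k(\delta)$, defined by choosing $R$-lattices and reducing, satisfies $d([\Delta_\nu^K(n;\delta+rp)])=[\Delta_\nu^k(n;\delta)]$ for every $\nu$.

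The crux is to exploit that $L_{\nu'}^K$ is the head of its own cell module while also occurring in $\Delta_\nu^K$. Expanding in the Grothendieck group and writing $d([L_\pi^K])=\sum_\sigma d_{\pi\sigma}\,[L_\sigma^k]$ with all $d_{\pi\sigma}\ge 0$, application of $d$ to $[\Delta_\nu^K]=\sum_\pi [\Delta_\nu^K:L_\pi^K]\,[L_\pi^K]$ yields
\[
[\Delta_\nu^k(n;\delta):L_\sigma^k]=\sum_\pi [\Delta_\nu^K:L_\pi^K]\,d_{\pi\sigma}\ \ge\ [\Delta_\nu^K:L_{\nu'}^K]\,d_{\nu'\sigma}\ \ge\ d_{\nu'\sigma},
\]
using the hypothesis $[\Delta_\nu^K:L_{\nu'}^K]\ge 1$. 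The same computation applied to $\Delta_{\nu'}^K$, together with $[\Delta_{\nu'}^K:L_{\nu'}^K]=1$ from the unitriangularity in Theorem~\ref{thmcelldec}(i), gives $[\Delta_{\nu'}^k(n;\delta):L_\sigma^k]\ge d_{\nu'\sigma}$. Finally $d([L_{\nu'}^K])\neq 0$, since any $R$-lattice $X\subset L_{\nu'}^K$ has reduction $k\otimes_R X$ of dimension $\dim_K L_{\nu'}^K>0$; hence $d_{\nu'\sigma}\neq 0$ for some $\sigma\in\Lambda_{\leq n}^*$. For such $\sigma$ both $[\Delta_\nu^k(n;\delta):L_\sigma^k]$ and $[\Delta_{\nu'}^k(n;\delta):L_\sigma^k]$ are nonzero, so $\nu$ and $\nu'$ are each cell-linked to $\sigma$ over $k$ and therefore lie in a common cell-block of $P_n^k(\delta)$.

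I expect the only delicate point to be the well-definedness and additivity of $d$ over the (not necessarily complete) discrete valuation ring $R$, i.e. the Brauer--Nesbitt independence of the chosen lattice; this is classical for a DVR and is precisely what legitimises both $d([\Delta_\nu^K])=[\Delta_\nu^k]$ and the displayed expansion. Everything else is a formal manipulation of composition multiplicities. It is worth emphasising that this argument never uses the explicit characteristic-zero Loewy structure of Theorem~\ref{thm:part0blocks}, nor any idempotent-lifting or completeness hypothesis on $R$: the common summand $d([L_{\nu'}^K])$ of the reductions of $\Delta_\nu^K$ and $\Delta_{\nu'}^K$ is what simultaneously links $\nu$ and $\nu'$ to $\sigma$.
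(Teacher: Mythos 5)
Your argument is correct, but it takes a genuinely different technical route from the paper's. Both proofs make the same initial reduction (it suffices to transfer a single cell-linkage step $[\Delta_\nu^K(n;\delta+rp):L_{\nu'}^K(n;\delta+rp)]\neq 0$ down to $k$, then chain), and both hinge on the fact that the $R$-form $\Delta_\nu^R(n;\delta+rp)$ reduces to $\Delta_\nu^k(n;\delta)$. Where you diverge is in how the single step is transferred: the paper works at the level of explicit module homomorphisms, taking the non-zero map $\psi:\Delta_\lambda^K\to\Delta_\mu^K/M$ guaranteed by the non-zero decomposition number, rescaling it so that its reduction modulo $\mathfrak{m}$ is non-zero, and thereby producing a non-zero homomorphism $\Delta_\lambda^k(n;\delta)\to\Delta_\mu^k(n;\delta)/(k\otimes_R N)$, whose image supplies the common composition factor. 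You instead work entirely in the Grothendieck group via the decomposition map $d$, using the classical factorisation of decomposition matrices together with unitriangularity of the characteristic-zero decomposition matrix and the non-vanishing of $d([L_{\nu'}^K])$ to exhibit a simple $L_\sigma^k$ occurring in both $\Delta_\nu^k$ and $\Delta_{\nu'}^k$. Your version buys a cleaner, more formal argument whose only non-trivial input is Brauer--Nesbitt well-definedness of $d$ over a (not necessarily complete) DVR, and it avoids the slightly delicate rescaling step and the choice of an $R$-form $N$ of the submodule $M$; the paper's version buys strictly finer information, namely an actual non-zero homomorphism between the reduced cell modules rather than just a shared composition factor, though for the block statement only the latter is needed. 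Both correctly use that for $0<\delta\leq p-1$ the parameter $\delta+rp$ is a non-zero integer, so that over $K$ every cell module has a simple head and the linkage classes are as described.
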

		\begin{proof}

			By  cellularity, the cell-blocks of the partition algebras are the classes of the equivalence relation on $\Lambda_{\leq n}$ generated by non-zero decomposition numbers. So we only need to show that if 
\begin{equation}\label{nonzerodecompK}
[\Delta_\mu^K(n;\delta +rp): L_\lambda^K(n;\delta+rp)]\neq 0
\end{equation}
 then $\mu\in \mathcal{B}_\lambda(P^k_n(\delta))$.
Now note that (\ref{nonzerodecompK}) implies that there exists a submodule $M\subset \Delta_\mu^K(n;\delta +rp)$ and a non-zero homomorphism
$$
\psi \, : \, \Delta_{\lambda}^K(n;\delta+rp) \longrightarrow \Delta_{\mu}^K(n;\delta+rp)/M. 
$$
Now as $\delta+rp=\delta$ in $k$ and the cell modules have  $R$-forms, we have  $k\otimes_R \Delta_{\lambda}^R(n, \delta + rp)=\Delta_{\lambda}^k(n, \delta)$ (and similarly for $\mu$). Now as all modules here have finite rank, the homomorphism $\psi$ can be rescaled if necessary so that $k\otimes_R\psi(\Delta_\lambda^R(n;\delta+rp))$ is non zero and \newline $k\otimes_R \psi(\Delta_\lambda^R(n;\delta +rp)) \subseteq (k\otimes_R \Delta_\mu^R(n;\delta +rp))/(k\otimes_R N) = \Delta_\mu^k(n;\delta)/(k\otimes_R N)$ for some $R$-form $N$ of $M$. This gives a non-zero homomorphism
$$\bar{\psi}\, :  \Delta_{\lambda}^k(n;\delta) \longrightarrow \Delta_{\mu}^k(n;\delta)/(k\otimes_R N).$$
This shows that $\mu\in \mathcal{B}_\lambda(P^k_n(\delta))$ as required. 
		\end{proof}
		
We now set $b=n$, so that all marked abaci have $n$ beads.
\begin{prop}\label{prop:partminblocks}
	Let $\lambda\in\Lambda_{\leq n}$ and write $\mathcal{O}=\mathcal{O}_\lambda(n;\delta)$. If $\lambda\neq\lambda_\mathcal{O}$, i.e. $\lambda$ is not minimal in its orbit, then there exists a partition $\mu\in\mathcal{O}$ with $|\mu|<|\lambda|$ and $\mu\in\mathcal{B}_\lambda(P^k_n(\delta))$.
\end{prop}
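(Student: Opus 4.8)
Throughout I take the representative $0<\delta\le p-1$ of $\delta\in\mathbb{F}_p$ and keep $b=n$, so that the mark $\vee$ of \emph{every} $\mu\in\mathcal{O}$ sits on the runner $v\equiv\delta-|\mu|+n \pmod p$; note that this runner depends only on the common degree. Recall from Proposition \ref{prop:partminimal} that $\lambda\neq\lambda_{\mathcal O}$ forces $|\lambda|>|\lambda_{\mathcal O}|$. The plan is to move within the fixed degree $|\lambda|$ using the symmetric group, and then drop the degree by a single reflection which I reinterpret in characteristic zero via Proposition \ref{thm:partreduceblocks}.

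The key reduction is as follows. Suppose I can produce a partition $\lambda'\in\mathcal{O}$ with $|\lambda'|=|\lambda|$ whose marked abacus has an empty position $M$ on runner $v$ lying \emph{above} a bead (i.e.\ with a bead at a strictly larger position). Then the proposition follows. First, $\lambda$ and $\lambda'$ share the same degree and the same $\Gamma_\delta$, hence the same runner $v$ and the same bead-distribution $\Gamma$; thus they have the same $p$-core and lie in the same block of $k\sym_{|\lambda|}$ by Theorem \ref{thm:nakayama}. Since a symmetric group block is a single linkage class, Theorem \ref{thm:decompsamesize} lifts this linkage to $P_n^k(\delta)$ and gives $\lambda'\in\mathcal{B}_\lambda(P_n^k(\delta))$. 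Secondly, set $\delta'=M+|\lambda|-n$; since $M\equiv v\equiv \delta-|\lambda|+n\pmod p$, one checks $\delta'\equiv\delta\pmod p$, so $\delta'=\delta+rp$ for some $r\in\mathbb{Z}$, and placing the mark at $M$ realises $\lambda'$ with parameter $\delta'$. Applying the reflection $s_{0,i}\in\widehat{W}_n$ interchanging the mark with the chosen lower bead sends $\widehat{\lambda'}$ to $\widehat{\mu}$ for a genuine partition $\mu$ with $|\mu|<|\lambda'|=|\lambda|$ (validity is immediate because $M\ge 0$). By Theorem \ref{thm:part0blocksgeom} we get $\mu\in\mathcal{B}_{\lambda'}(P_n^K(\delta'))$, and Proposition \ref{thm:partreduceblocks} then yields $\mu\in\mathcal{B}_{\lambda'}(P_n^k(\delta))=\mathcal{B}_\lambda(P_n^k(\delta))$, with $\mu\in\mathcal{O}$ by Corollary \ref{cor:blocks<orbits}. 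This $\mu$ is the required partition.

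It remains to construct such a $\lambda'$, which is the real content, and I would split on whether $\lambda$ is a $p$-core. If $\lambda$ is a $p$-core I would take $\lambda'=\lambda$ and show that the existence of an empty position on runner $v$ above a bead is equivalent to the mark \emph{not} lying on the rightmost runner of maximal $\Gamma_\delta$-value: comparing, runner by runner, the position of the lowest bead with the first empty slot on runner $v$, one sees that a bead lies below that slot precisely when some runner to the right (or some runner with strictly larger $\Gamma_\delta$) witnesses the failure of condition (iii) in the definition of $\lambda_{\mathcal O}$, which is exactly what $\lambda\neq\lambda_{\mathcal O}$ guarantees here. If $\lambda$ is not a $p$-core, the slack $|\lambda|-|\mathrm{core}|$ is a positive multiple $sp$ of $p$, and I have the freedom to slide beads along their runners without changing $\Gamma$ or the degree (i.e.\ to move within the symmetric group block): pushing every runner up except runner $v$, onto which I push all of the slack, opens a gap above a bead on runner $v$; if instead runner $v$ carries no bead, I push the slack onto any occupied runner so that a bead drops below position $v$. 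Either way this produces the desired $\lambda'\in\mathcal{O}$ of degree $|\lambda|$.

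The main obstacle is precisely this last existence statement: one must verify that the abacus ``downward move'' always lands on a valid partition of strictly smaller degree in the same $\widehat{W}_n$-orbit with $\delta'\equiv\delta\pmod p$, and, more delicately, that when runner $v$ is already solid below the top bead one can always redistribute the beads inside the fixed symmetric-group block to expose a bead below the mark. Once the bookkeeping with $b=n$ beads and the congruence $\delta'\equiv\delta\pmod p$ is arranged, both points reduce to elementary counting on the runners.
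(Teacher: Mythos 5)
Your proposal is correct and uses exactly the same ingredients as the paper's proof: Nakayama's conjecture together with Theorem \ref{thm:decompsamesize} to move within a fixed degree, and a single $s_{0,j}$-reflection interpreted via Theorem \ref{thm:part0blocksgeom} for the parameter $\delta+rp$ followed by Proposition \ref{thm:partreduceblocks} to drop the degree. The only difference is organizational: you first normalize to a convenient $\lambda'$ in the same symmetric-group block and then apply one uniform reflection, whereas the paper carries out the bead moves directly in four cases (A, B, C, D), with its Case C playing the role of your normalization step; your case analysis ($p$-core versus not, runner $v$ occupied or empty) and the elementary runner-counting checks all go through as you indicate.
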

	\begin{proof}
		If $\lambda\neq\lambda_\mathcal{O}$, then as in the proof of Proposition \ref{prop:partminimal} either $\lambda$ is not a $p$-core or $v_\lambda$ is not the rightmost runner $i$ such that $\Gamma_\delta(\lambda,n)_i$ is maximal. We now refine the cases provided in the proof of Proposition \ref{prop:partminimal} to construct a partition $\mu$ with the required properties.

		\textbf{Case A}~ The partition $\lambda$ is not a $p$-core and there is a bead, say the $j$-th bead, which lies on runner $v_\lambda$ and has an empty space immediately above it.
Let $\mu$ be the partition obtained by moving the $j$-th bead one space up its runner (as illustrated in Figure \ref{fig:partcase1}), so that it now occupies position $\lambda_j-j+n-p$. Note that $|\mu|=|\lambda|-p$. In particular, since no beads are changing runners we get  $\Gamma_\delta(\mu,n)=\Gamma_\delta(\lambda,n)$ and so $\mu\in\mathcal{O}$. Note that setwise the $\beta_\delta$-sequence $\beta_\delta(\mu,n)$ must be
		\begin{equation}
			(\delta-|\lambda|+p+n,\lambda_1-1+n,\dots,\lambda_j-j-p+n,\dots,0)\label{eq:partbetacase1}
		\end{equation}
		since no other beads move. However since bead $j$ lies on runner $v_\lambda$, we can find $r\in\mathbb{Z}$ such that
			\[\delta-|\lambda|+n+(r+1)p=\lambda_j-j+n\]
		and can therefore rewrite \eqref{eq:partbetacase1} as
			\[(\lambda_j-j+n-rp,\lambda_1-1+n,\dots,\delta-|\lambda|+n+rp,\dots,0).\]
		Thus, for an appropriate element $w\in\langle s_{i,j}:1\leq i<j\leq n\rangle =W_n$, we have:
		\[w^{-1}(\beta_\delta(\mu,n))=(\lambda_j-j-rp+n,\lambda_1-1+n,\dots,\underbrace{\delta-|\lambda|+rp+n}_{j\text{-th place}},\dots,0)\]
		and hence
		\begin{align*}
			\beta_\delta(\lambda,n)-w^{-1}(\beta_\delta(\mu,n))&=(\delta-|\lambda|-\lambda_j+j+rp)(\varepsilon_0-\varepsilon_j)\\
			&=\langle\hat\lambda+\rho_n(\delta+rp),\varepsilon_0-\varepsilon_j\rangle(\varepsilon_0-\varepsilon_j).
		\end{align*}
	We can rewrite this as
		\[w^{-1}(\hat\mu+\rho_n(\delta)+n(1,\dots,1))=\hat\lambda+\rho_n(\delta)+n(1,\dots,1)-\langle\hat\lambda+\rho_n(\delta+rp),\varepsilon_0-\varepsilon_j\rangle(\varepsilon_0-\varepsilon_j).\]
		Since $W_n$ does not act on the $0$-th postition, both the elements $(rp,0,0,\dots,0)$ and $n(1,1,\dots,1)$ are unchanged by $w^{-1}$. Thus:
		\begin{align*}
			&w^{-1}(\hat\mu+\rho_n(\delta)+(rp,0,\dots,0))\\&~~~~~~=\hat\lambda+\rho_n(\delta)+(rp,0,\dots,0)-\langle\hat\lambda+\rho_n(\delta+rp),\varepsilon_0-\varepsilon_j\rangle(\varepsilon_0-\varepsilon_j)\\
			\implies&w^{-1}(\hat\mu+\rho_n(\delta+rp))=s_{0,j}(\hat\lambda+\rho_n(\delta+rp))\\
			\implies&\hat\mu=ws_{0,j}(\hat\lambda+\rho_n(\delta+rp))-\rho_n(\delta+rp)\\
			\implies&\hat\mu=ws_{0,j}\cdot_{\delta+rp}\hat\lambda.
		\end{align*}
	Therefore $\hat\mu\in \widehat{W}_n\cdot_{\delta+rp}\hat\lambda$, and so, by Theorem \ref{thm:part0blocksgeom}, $\mu\in\mathcal{B}_\lambda(P^K_n(\delta+rp))$. Proposition \ref{thm:partreduceblocks} then provides the final result.

	\begin{figure}
		\centering
		\begin{tikzpicture}[scale=0.6]
			\draw (1,4)--(5,4);
			\foreach \x in {1,...,5}
			\draw (\x,1)--(\x,4);
			\fill[black] (3,2.5) circle (6pt);
			\draw (0.75+2,4.5)--(1+2,4)--(1.25+2,4.5);
			
			\draw[->] (5.75,2.5)--(7.25,2.5);
			
			\draw (1+7,4)--(5+7,4);
			\foreach \x in {1,...,5}
			\draw (\x+7,1)--(\x+7,4);
			\fill[black] (3+7,3.5) circle (6pt);
			\draw (0.75+9,4.5)--(1+9,4)--(1.25+9,4.5);
		\end{tikzpicture}
		\caption{The movement of beads in Case A.}\label{fig:partcase1}
	\end{figure}
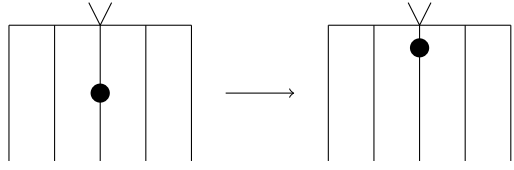

	\textbf{Case B}~~ The partition $\lambda$ is not a $p$-core and the runner $v_\lambda$ is empty. As $\lambda$ is not a $p$-core, there is a bead, say the $j$-th bead (not on the runner $v_\lambda$) with an empty space immediately above it.  Then by moving the $j$-th bead one space up its runner and then across to runner $v_\lambda$, we obtain the abacus of a new partition $\mu$ with $|\mu|=|\lambda|-m$ for some $m>0$ (as illustrated in Figure \ref{fig:partcase2}). Since bead $j$ is now on runner $v_\lambda$ and occupies position $\lambda_j-j+n-m$, we see that 
		\begin{equation}
		\lambda_j-j+n-m=\delta-|\lambda|+n+rp\label{eq:partcase2}
		\end{equation}
		for some $r\in\mathbb{Z}$. The runner $v_\mu$ is given by
			\begin{align*}
				\delta-|\mu|+n&=\delta-|\lambda|+m+n\\
						&=\lambda_j-j+n-rp.
			\end{align*}
		So runner $v_\mu$ is equal to the runner previously occupied by bead $j$. Therefore $\Gamma_\delta(\mu,n)=\Gamma_\delta(\lambda,n)$, and so $\mu\in\mathcal{O}_\lambda^p(n;\delta)$. We also have that setwise the $\beta_\delta$-sequence $\beta_\delta(\mu,n)$ is
		\[(\delta-|\lambda|+m+n,\lambda_1-1+n,\dots,\lambda_j-j-m+n,\dots,0)\]
		which, by using \eqref{eq:partcase2}, we may rewrite as
		\[(\lambda_j-j+n-rp,\lambda_1-1+n,\dots,\delta-|\lambda|+n+rp,\dots,0).\]
		We can then continue as in Case A to deduce that $\mu\in \mathcal{B}_\lambda(P^k_n(\delta))$.
	\begin{figure}
		\centering
		\begin{tikzpicture}[scale=0.6]
			\draw (1,4)--(5,4);
			\foreach \x in {1,...,5}
			\draw (\x,1)--(\x,4);
			\fill[black] (4,2.5) circle (6pt);
			\draw (0.75+1,4.5)--(1+1,4)--(1.25+1,4.5);
			
			\draw[->] (5.75,2.5)--(7.25,2.5);
			
			\draw (1+7,4)--(5+7,4);
			\foreach \x in {1,...,5}
			\draw (\x+7,1)--(\x+7,4);
			\fill[black] (3+6,3.5) circle (6pt);
			\draw (0.75+10,4.5)--(1+10,4)--(1.25+10,4.5);
		\end{tikzpicture}
		\caption{The movement of beads in Case B.}\label{fig:partcase2}
	\end{figure}
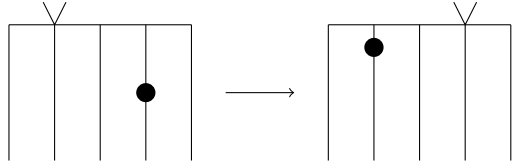

	\textbf{Case C}~ The partition $\lambda$ is not a $p$-core, there is at least one bead on runner $v_\lambda$ and all the beads on runner $v_\lambda$ are as far up as possible. Then there is a bead, say the $j$-th bead, not on the runner $v_\lambda$, with an empty space immediately above it. Define $\nu$ to be the partition obtained by moving the $j$-th bead one space up and moving the last bead on runner $v_\lambda$ one space down.  By Theorems \ref{thm:nakayama} and \ref{thm:decompsamesize} we have that $\nu\in \mathcal{B}_\lambda(P^k_n(\delta))$. Now note that $|\nu|=|\lambda|$ and $\nu$ satisfies the conditions of Case A, and using this case we can find $\mu\in \mathcal{B}_\nu(P^k_n(\delta))=\mathcal{B}_\lambda(P^k_n(\delta))$ with $|\mu|<|\nu|=|\lambda|$.  This is illustrated in Figure \ref{fig:partcase3}.
	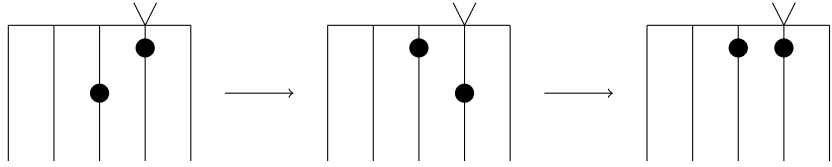
\begin{figure}
		\centering
		\begin{tikzpicture}[scale=0.6]
			\draw (1,4)--(5,4);
			\foreach \x in {1,...,5}
			\draw (\x,1)--(\x,4);
			\fill[black] (4,3.5) circle (6pt);
			\fill[black] (3,2.5) circle (6pt);
			\draw (0.75+3,4.5)--(1+3,4)--(1.25+3,4.5);
			
			\draw[->] (5.75,2.5)--(7.25,2.5);
			
			\draw (1+7,4)--(5+7,4);
			\foreach \x in {1,...,5}
			\draw (\x+7,1)--(\x+7,4);
			\fill[black] (3+7,3.5) circle (6pt);
			\fill[black] (4+7,2.5) circle (6pt);
			\draw (0.75+10,4.5)--(1+10,4)--(1.25+10,4.5);
			
			\draw[->] (5.75+7,2.5)--(7.25+7,2.5);
			
			\draw (1+14,4)--(5+14,4);
			\foreach \x in {1,...,5}
			\draw (\x+14,1)--(\x+14,4);
			\fill[black] (3+14,3.5) circle (6pt);
			\fill[black] (4+14,3.5) circle (6pt);
			\draw (0.75+17,4.5)--(1+17,4)--(1.25+17,4.5);

		\end{tikzpicture}
		\caption{The movement of beads in Case C.}\label{fig:partcase3}
	\end{figure}

\textbf{Case D}~ Now suppose that $\lambda$ is a $p$-core, but $v_\lambda$ is not the rightmost runner $i$ such that $\Gamma_\delta(\lambda,n)_i$ is maximal. As in Case 2 in the proof of Proposition \ref{prop:partminimal} we can pick the first runner, say runner $i$, to the right of $v_{\lambda}$ satisfying $\Gamma_\delta(\lambda, b)_{v_{\lambda}}\leq \Gamma_\delta(\lambda,b)_{i}$ and define $\mu$ to be the partition obtained  by moving the lowest bead on runner $i$, say it is the $j$-th bead, exactly $i-v_{\lambda}$ steps to the left to runner $v_{\lambda}$ (as illustrated in Figure \ref{fig:partminimal}). Then we have $|\mu|=|\lambda|-(i-v_{\lambda})$ and so $v_{\mu}=i$. 
Now we have
$$\hat{\mu}=(-|\lambda|+(i-v_{\lambda}), \lambda_1, \ldots , \lambda_{j-1}, \lambda_j-(i-v_\lambda), \lambda_{j+1}, \ldots 0).$$ 
As $\lambda_j-j+n\equiv i \, {\rm mod}\, p$ and $-|\lambda|+\delta +n\equiv v_{\lambda} \, {\rm mod}\, p$ we get that 
$$\lambda_j-j-i = -|\lambda|+\delta -v_\lambda +rp$$
for some $r\in \mathbb{Z}$. This gives $-|\lambda|+(i-v_\lambda)=\lambda_j-j-(\delta+rp)$ and $\lambda_j-(i-v_\lambda)=-|\lambda|+(\delta +rp) +j$. Thus we have
\begin{eqnarray*}
\hat{\mu} &=& (\lambda_j-j-(\delta+rp), \lambda_1, \ldots , \lambda_{j-1}, -|\lambda|+(\delta +rp) +j, \lambda_{j+1}, \ldots , 0)\\
&=& s_{0,j}\cdot_{\delta +rp} \hat{\lambda}.
\end{eqnarray*}
Using Theorem \ref{thm:part0blocks} and Proposition \ref{thm:partreduceblocks} we deduce that $\mu\in \mathcal{B}_\lambda(P^k_n(\delta))$.
\end{proof}

	We immediately deduce the following:
\begin{cor}\label{cor:blocks>orbits}
	Let $\lambda,\mu\in\Lambda_{\leq n}$. If $\mu\in\mathcal{O}_\lambda^p(n;\delta)$, then $\mu\in\mathcal{B}_\lambda(P^k_n(\delta))$.
\end{cor}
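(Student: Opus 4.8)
The plan is to show that every partition in the orbit $\mathcal{O}=\mathcal{O}_\lambda^p(n;\delta)$ is cell-linked to the common minimal element $\lambda_\mathcal{O}$, and then to invoke transitivity of the cell-block equivalence relation. First I would observe that $\sim_p$ is an equivalence relation, so $\mu\in\mathcal{O}_\lambda^p(n;\delta)$ forces $\mathcal{O}_\mu^p(n;\delta)=\mathcal{O}_\lambda^p(n;\delta)=\mathcal{O}$; in particular $\lambda$ and $\mu$ share the same minimal element $\lambda_\mathcal{O}$ supplied by Proposition \ref{prop:partminimal}.

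Next I would prove, for an arbitrary $\nu\in\mathcal{O}$, that $\nu\in\mathcal{B}_{\lambda_\mathcal{O}}(P_n^k(\delta))$, by induction on $|\nu|-|\lambda_\mathcal{O}|\geq 0$. If $|\nu|=|\lambda_\mathcal{O}|$ then by Proposition \ref{prop:partminimal} we have $\nu=\lambda_\mathcal{O}$ and there is nothing to prove. Otherwise $\nu\neq\lambda_\mathcal{O}$, so Proposition \ref{prop:partminblocks} applied to $\nu$ produces a partition $\nu'\in\mathcal{O}$ with $|\nu'|<|\nu|$ and $\nu'\in\mathcal{B}_\nu(P_n^k(\delta))$. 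By the induction hypothesis $\nu'\in\mathcal{B}_{\lambda_\mathcal{O}}(P_n^k(\delta))$, and since cell-blocks are equivalence classes, the relations $\nu'\in\mathcal{B}_\nu$ and $\nu'\in\mathcal{B}_{\lambda_\mathcal{O}}$ together give $\mathcal{B}_\nu=\mathcal{B}_{\lambda_\mathcal{O}}$; hence $\nu\in\mathcal{B}_{\lambda_\mathcal{O}}(P_n^k(\delta))$, completing the induction.

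Applying this to $\nu=\lambda$ and to $\nu=\mu$ yields $\lambda,\mu\in\mathcal{B}_{\lambda_\mathcal{O}}(P_n^k(\delta))$, whence $\mathcal{B}_\lambda(P_n^k(\delta))=\mathcal{B}_{\lambda_\mathcal{O}}(P_n^k(\delta))=\mathcal{B}_\mu(P_n^k(\delta))$, and therefore $\mu\in\mathcal{B}_\lambda(P_n^k(\delta))$, as desired. The substance of the argument is entirely contained in Propositions \ref{prop:partminimal} and \ref{prop:partminblocks}; the present deduction is purely formal. The only point requiring care is that the descent terminates, which is guaranteed because the degree strictly decreases at each step and is bounded below by $|\lambda_\mathcal{O}|$, and because $\lambda_\mathcal{O}$ is the \emph{unique} minimal element of $\mathcal{O}$, so the process cannot stall at any other partition. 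There is no genuine obstacle at this stage: the real difficulty lay in the abacus case analysis (Cases A--D) of Proposition \ref{prop:partminblocks} and the reduction-mod-$p$ input of Proposition \ref{thm:partreduceblocks} that underpins it.
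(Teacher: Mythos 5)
Your proposal is correct and follows essentially the same route as the paper: both arguments reduce to showing that an arbitrary element of the orbit is cell-linked to the unique minimal element $\lambda_\mathcal{O}$, proved by induction on degree with Proposition \ref{prop:partminblocks} supplying the strict descent and Proposition \ref{prop:partminimal} supplying the base case. The only cosmetic difference is that you index the induction by $|\nu|-|\lambda_\mathcal{O}|$ and spell out the application to both $\lambda$ and $\mu$, which the paper leaves implicit.
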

	\begin{proof}
		Since each set $\mathcal{O}=\mathcal{O}_\lambda^p(n;\delta)$ contains a unique minimal element $\lambda_\mathcal{O}$, it suffices to show that $\lambda_\mathcal{O}\in\mathcal{B}_{\lambda}(P^k_n(\delta))$. We prove this by induction on $|\lambda|$.
		
		If $|\lambda|$ is minimal, then $\lambda=\lambda_\mathcal{O}$ and there is nothing to prove. So suppose otherwise, i.e. that $\lambda\neq\lambda_\mathcal{O}$. Then by Proposition \ref{prop:partminblocks} there is a partition $\nu\in\mathcal{O}$ with $|\nu|<|\lambda|$ and $\nu\in\mathcal{B}_\lambda(P^k_n(\delta))$. By our inductive step, we then have $\lambda_\mathcal{O}\in\mathcal{B}_\nu(P^k_n(\delta))$. But cell-blocks are either disjoint or coincide entirely, so $\mathcal{B}_\nu(P^k_n(\delta))=\mathcal{B}_\lambda(P^k_n(\delta))$ and $\lambda_\mathcal{O}\in\mathcal{B}_\lambda(P^k_n(\delta))$ as required.
	\end{proof}
	
\noindent We can now combine Corollaries \ref{cor:blocks<orbits} and \ref{cor:blocks>orbits} to obtain the main result of this paper.
	
\begin{thm}\label{thm:partpblocks} Assume $\delta\in \bbf_p$ with $\delta \neq 0$.  For any $\lambda\in\Lambda_{\leq n}$ we have  $\mathcal{B}_\lambda(P^k_n(\delta))=\mathcal{O}_\lambda^p(n;\delta)$. 
\end{thm}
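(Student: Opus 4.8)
The plan is to recognise that the statement is an equality of two subsets of $\Lambda_{\leq n}$, and that both inclusions have effectively been prepared already. First I would recall that, by Definition 8.1, the set $\mathcal{O}_\lambda^p(n;\delta)$ consists of exactly those $\mu\in\Lambda_{\leq n}$ satisfying $\hat\mu+\rho_n(\delta)\sim_p\hat\lambda+\rho_n(\delta)$; since $\delta\in\bbf_p$, the relation $\sim_k$ on these sequences coincides with $\sim_p$, so this is the same condition that appears in Corollary \ref{cor:blocks<orbits}.

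For the inclusion $\mathcal{B}_\lambda(P_n^k(\delta))\subseteq\mathcal{O}_\lambda^p(n;\delta)$ I would simply invoke Corollary \ref{cor:blocks<orbits}: if $\mu\in\mathcal{B}_\lambda(P_n^k(\delta))$ then $\hat\mu+\rho_n(\delta)\sim_k\hat\lambda+\rho_n(\delta)$, which is precisely the membership condition for $\mathcal{O}_\lambda^p(n;\delta)$. For the reverse inclusion $\mathcal{O}_\lambda^p(n;\delta)\subseteq\mathcal{B}_\lambda(P_n^k(\delta))$, Corollary \ref{cor:blocks>orbits} gives the implication directly. Combining the two inclusions yields $\mathcal{B}_\lambda(P_n^k(\delta))=\mathcal{O}_\lambda^p(n;\delta)$, as required.

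The main obstacle is therefore not in assembling this theorem, which is immediate, but in the two corollaries that feed it. The harder of the two is the sufficiency, Corollary \ref{cor:blocks>orbits}, which rests on the abacus combinatorics of Proposition \ref{prop:partminimal} (each orbit has a unique element of minimal degree) together with the block-lifting argument of Proposition \ref{prop:partminblocks}, itself a delicate case analysis of bead moves combined with the reduction-modulo-$p$ statement Proposition \ref{thm:partreduceblocks}. The necessity, Corollary \ref{cor:blocks<orbits}, in turn depends on the Jucys--Murphy element computations of Section 6 (Lemma \ref{lem:seminormal} and Theorem \ref{thm:blocks<orbits}). Since all of these are available to me, the present theorem follows at once.
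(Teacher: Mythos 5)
Your proposal is correct and matches the paper exactly: the paper's own proof of this theorem is precisely the one-line combination of Corollary \ref{cor:blocks<orbits} (necessity, via the Jucys--Murphy arguments of Section 6) with Corollary \ref{cor:blocks>orbits} (sufficiency, via the abacus and reduction-modulo-$p$ arguments of Section 8). Your identification of where the real work lies is also accurate.
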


\subsection*{Reflection geometry.}

We now give a reformulation of Theorem \ref{thm:partpblocks} in terms of the action of an affine reflection group, extending the ones given for the blocks of the symmetric group (over $k$) and for the partition algebra (over $K$) in Sections 3 and 5 respectively. In fact, the affine reflection group we need to consider here, which we will denote by $\widehat{W}_n^p$  is precisely the group generated by $W_n^p$ and $\widehat{W}_n$. Using Proposition \ref{thm:partreduceblocks} and Theorem  \ref{thm:decompsamesize}, we know that our new group should certainly contain $W_n^p$ and $\widehat{W}_n$ but it is perhaps surprising that it is in fact the smallest such group.

 Let $\widehat{W}_n^p$ be the affine reflection group on $\widehat{E}_n$  generated by the affine reflections $s_{i,j,rp}$ $(0\leq i<j\leq n),r\in\mathbb{Z}$, where $s_{i,j,rp}(x)=x-\big(\langle x,\varepsilon_i-\varepsilon_j\rangle-rp\big)(\varepsilon_i-\varepsilon_j)$ for all $x\in \widehat{E}_n$.  Using the same embedding $\hat\lambda\in \widehat{E}_n$ of a partition $\lambda\in \Lambda_{\leq n}$ and the same shifted action by $\rho_n(\delta)$ as before we can reformulate Theorem \ref{thm:partpblocks} as follows.
\begin{thm}
 Assume $\delta\in \bbf_p$ with $\delta \neq 0$.	Let $\lambda, \mu\in \Lambda_{\leq n}$. Then we have $\mu\in \mathcal{B}_\lambda(P_n^k(\delta))$ if and only if $\hat{\mu}\in \widehat{W}_n^p \cdot_{\delta} \hat{\lambda}$.
\end{thm}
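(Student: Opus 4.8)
The plan is to combine Theorem~\ref{thm:partpblocks}, which identifies $\mathcal{B}_\lambda(P_n^k(\delta))$ with the orbit set $\mathcal{O}_\lambda^p(n;\delta)$, with a purely combinatorial-geometric translation between the relation $\sim_p$ and the $\widehat{W}_n^p$-orbits. By definition $\mu\in\mathcal{O}_\lambda^p(n;\delta)$ means $\hat\mu+\rho_n(\delta)\sim_p\hat\lambda+\rho_n(\delta)$, whereas $\hat\mu\in\widehat{W}_n^p\cdot_\delta\hat\lambda$ means $\hat\mu+\rho_n(\delta)=w(\hat\lambda+\rho_n(\delta))$ for some $w\in\widehat{W}_n^p$. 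So, after invoking Theorem~\ref{thm:partpblocks}, it suffices to prove that these two conditions are equivalent. This runs exactly parallel to the type $A_{n-1}$ argument in Theorem~\ref{geomnakayama}, now carried out in $\widehat{E}_n$ with the extra coordinate $\varepsilon_0$ and with the $\delta$-shift $\rho_n(\delta)$ in place of $\rho_n$.

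First I would record the structure of the affine group. Taking $r=0$ shows $\widehat{W}_n\subseteq\widehat{W}_n^p$, and composing $s_{i,j,rp}$ with $s_{i,j,0}$ produces the translation by $rp(\varepsilon_i-\varepsilon_j)$; hence $\widehat{W}_n^p=\widehat{W}_n\ltimes p\mathbb{Z}\widehat{\Phi}_n$, where $p\mathbb{Z}\widehat{\Phi}_n$ is the lattice of translations by $p$ times an element of the type $A_n$ root lattice $\mathbb{Z}\widehat{\Phi}_n$. For the forward implication, write $w=t_{pz}\sigma$ where $\sigma\in\widehat{W}_n\cong\sym_{n+1}$ permutes the coordinates and $t_{pz}$ denotes translation by $pz$ with $z\in\mathbb{Z}\widehat{\Phi}_n$; then $(\hat\mu+\rho_n(\delta))_i=(\hat\lambda+\rho_n(\delta))_{\sigma^{-1}(i)}+pz_i$, so $\hat\mu+\rho_n(\delta)\sim_p\hat\lambda+\rho_n(\delta)$ via $\sigma$, giving this direction immediately.

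For the converse, suppose $\hat\mu+\rho_n(\delta)\sim_p\hat\lambda+\rho_n(\delta)$, so that there exist $\sigma\in\sym_{n+1}$ and an integer vector $z$ with $\hat\mu+\rho_n(\delta)=\sigma(\hat\lambda+\rho_n(\delta))+pz$. The crux is that the coordinate sum of $\hat\lambda+\rho_n(\delta)$ is independent of $\lambda$: since $\hat\lambda+\rho_n(\delta)=(\delta-|\lambda|,\lambda_1-1,\dots,\lambda_n-n)$, its entries sum to $\delta-|\lambda|+\sum_{i=1}^n(\lambda_i-i)=\delta-\tfrac{n(n+1)}{2}$. As $\sigma$ merely permutes coordinates it preserves this total sum, so comparing the sums of the two sides forces $p\sum_i z_i=0$, hence $\sum_i z_i=0$, i.e.\ $z\in\mathbb{Z}\widehat{\Phi}_n$. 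Therefore $t_{pz}\sigma\in\widehat{W}_n^p$ and $\hat\mu+\rho_n(\delta)=t_{pz}\sigma(\hat\lambda+\rho_n(\delta))$, yielding $\hat\mu\in\widehat{W}_n^p\cdot_\delta\hat\lambda$. The one point requiring care — and the place on which the whole equivalence turns — is precisely this sum computation together with the fact that for type $A_n$ the condition $\sum_i z_i=0$ characterises the root lattice exactly; this is what guarantees that the translation lattice inside $\widehat{W}_n^p$ is neither too small nor too large to match $\sim_p$, and it is the only step where the specific form of $\rho_n(\delta)$ (and the non-vanishing of $\delta$, ensuring we stay in the intended affine geometry) genuinely enters.
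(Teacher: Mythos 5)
Your proposal is correct and follows essentially the same route as the paper: reduce to Theorem \ref{thm:partpblocks} and then show that $\hat\mu+\rho_n(\delta)\sim_p\hat\lambda+\rho_n(\delta)$ is equivalent to membership in the $\widehat{W}_n^p$-orbit by observing that the coordinate sums on both sides agree, forcing the translation part to lie in the type $A_n$ root lattice $\mathbb{Z}\widehat{\Phi}_n$. The only cosmetic difference is that you compute the full sum $\delta-\tfrac{n(n+1)}{2}$ while the paper notes $\sum_i(\hat\lambda)_i=0$ directly; also note that $\delta\neq 0$ is needed only for invoking Theorem \ref{thm:partpblocks}, not for the lattice computation itself.
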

	\begin{proof}
		We have $\hat\mu\in \widehat{W}_n^p\cdot\hat\lambda$ if and only if there is some $w\in \widehat{W}_n$ and $\alpha\in\mathbb{Z}\widehat{\Phi}_n$ such that
			\[\hat\mu+\rho_n(\delta)=w(\hat\lambda+\rho_n(\delta))+p\alpha.\]
		Conversely, we have $\hat\mu+\rho_n(\delta)\sim_p\hat\lambda+\rho_n(\delta)$ if and only if
			\[\hat\mu+\rho_n(\delta)=w(\hat\lambda+\rho_n(\delta))+px\]
		for some $w\in \widehat{W}_n$ and $x\in\mathbb{Z}^{n+1}$. But as $\sum_{i=0}^n(\hat\mu)_i=\sum_{i=0}^n(\hat\lambda)_i=0$ we see that $\sum_{i=0}^nx_i=0$, and therefore $x\in\mathbb{Z}\widehat{\Phi}_n$.
	\end{proof}
					
\section{Limiting blocks}

We continue to assume that $\delta\in \bbf_p$ with $\delta\neq 0$ throughout this final section.

The proof of Proposition \ref{prop:partminblocks} uses two main ingredients to show that a partition $\lambda$ is in the same block as the minimal element in its orbit $\lambda_{\mathcal{O}}$, namely:
\begin{enumerate}
\item the blocks of the partition algebra $P_n^K(\delta +rp)$ for all $r\in \mathbb{Z}$ (used in Cases A, B and D), and
\item the blocks of the symmetric group algebras $k\sym_{n-t}$ for all $0\leq t\leq n$ (used in Case C).
\end{enumerate}

One might wonder whether we can give a proof of this result which only uses (1). This would give a proof of the modular blocks of the partition algebra with integer parameter without assuming any result about the modular representation theory of the symmetric group (note that Section 6 giving the necessary condition for the blocks does not use the modular representation theory of the symmetric group). This is not possible as the following example shows.					

\begin{example}\label{ex:partlimblocks}
Consider the partition $\lambda = (7, 3^2, 1^2)$ with $p=5$, $n=15$ and $\delta=1$. Then the marked abacus of $\lambda$ with $15$ beads is illustrated in Figure \ref{fig:partabacussym}.
	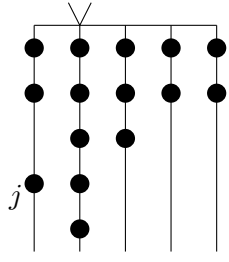
\begin{figure}
		\centering
		\begin{tikzpicture}[scale=0.6]
			\draw (1,5)--(5,5);
			\foreach \x in {1,...,5}
			\draw (\x,0)--(\x,5);
			\fill[black] (1,4.5) circle (6pt);
\fill[black] (1,3.5) circle (6pt); \fill[black] (1,1.5) circle (6pt); 
			\fill[black] (2,4.5) circle (6pt); \fill[black] (2,3.5) circle (6pt);
\fill[black] (2,2.5) circle (6pt); \fill[black] (2,1.5) circle (6pt); \fill[black] (2,0.5) circle (6pt);
			\fill[black] (3,4.5) circle (6pt);
			\fill[black] (3,3.5) circle (6pt);
			\fill[black] (3,2.5) circle (6pt);\draw (0.6,1.2) node {$j$};
			\fill[black] (4,4.5) circle (6pt);
			\fill[black] (4,3.5) circle (6pt); \fill[black] (5,4.5) circle (6pt);
			\fill[black] (5,3.5) circle (6pt); 
			\draw (1.75,5.5)--(2,5)--(2.25,5.5);
		\end{tikzpicture}
		\caption{The marked abacus of $\lambda=(7,3^2,1^2)$ with $p=5$, $\delta=1$ and $b=15$.}\label{fig:partabacussym}
	\end{figure}
From the abacus we see that 
$$\mathcal{O}^5_\lambda (15;1)=\{(12,3), (7,4^2), (7,3^2,1^1), (7,3,2,1^3), (7,3,1^5), (7,3)\}.$$
Now the only $\delta+rp$-pair, for any $r\in \mathbb{Z}$, among these partitions is given by
$$(7,3) \hookrightarrow_{21} (12,3).$$
Thus in this case it is impossible to show that $\lambda$ is in the same block as $\lambda_{\mathcal{O}}=(7,3)$ without using the blocks of the symmetric group algebra $k\sym_{15}$.
\end{example}

However, we will show that if we allow ourselves to increase the degree of the partitions we consider, then it is possible to link any partition $\lambda$ to the minimal element $\lambda_\mathcal{O}$ in its orbit using only (the modular reduction of) the blocks of the partition algebras $P^K_n(\delta +rp)$ (for all $r\in \mathbb{Z}$). We make this more precise by defining the notion of \emph{limiting blocks}.

Recall that the globalisation functor $G_{n,n+1}$ defined in Section 4 gives a full embedding of the category $P_n^k(\delta)\text{-\catmod}$ into $P_{n+1}^k(\delta)\text{-\catmod}$. Moreover, we have $G_{n,n+1}(\Delta_\lambda^k(n))=\Delta_\lambda^k(n+1)$. So under this embedding the labelling sets for cell modules correspond via the natural inclusion
$\Lambda_{\leq n} \subset \Lambda_{\leq n+1}$.
We define
$$\Lambda = \bigcup_{n\geq 0}\Lambda_{\leq n}.$$
Now for $\lambda\in \Lambda_{\leq n}$ the functor $G_{n,n+1}$ gives an embedding $\mathcal{B}_{\lambda}(P^k_n(\delta))\subseteq \mathcal{B}_{\lambda}(P^k_{n+1}(\delta))$ and so we can define the \emph{limiting cell-block} containing $\lambda\in \Lambda_{\leq n}$ by
$$\mathcal{B}_\lambda^k(\delta):=\bigcup_{m\geq n} \mathcal{B}_\lambda(P^k_m(\delta)) = \{\mu\in \Lambda \, : \,  \mu\in \mathcal{B}_\lambda(P^k_m(\delta)) \, \mbox{for some $m$}\}.$$
Now we consider $\widehat{E}_n\subset \widehat{E}_{n+1}$ by taking the last coordinate to be zero. Note that for $\lambda, \mu\in \Lambda_{\leq n}$, if $\hat{\mu}+\rho_n(\delta) \sim_p \hat{\lambda}+\rho_n(\delta)$ then $\hat{\mu}+\rho_{n+1}(\delta) \sim_p \hat{\lambda}+\rho_{n+1}(\delta)$. So we also have $\mathcal{O}_\lambda^p(n;\delta) \subseteq \mathcal{O}_\lambda^p(n+1,\delta)$.
Hence we can define
$$\mathcal{O}_\lambda^p(\delta) = \bigcup_{m\geq n} \mathcal{O}_\lambda^p(n;\delta).$$

With these definitions, we prove the following without using any results on the modular representation theory of $\sym_n$.

\begin{thm}\label{thm:partlimblocks}
	Assume $\delta\in \bbf_p$ with $\delta \neq 0$. Let $\lambda\in\Lambda_{\leq n}$, then $\mathcal{B}_\lambda^k(\delta)=\mathcal{O}_\lambda^k(\delta)$. 
\end{thm}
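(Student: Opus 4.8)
The plan is to prove the two inclusions $\mathcal{B}_\lambda^k(\delta)\subseteq\mathcal{O}_\lambda^p(\delta)$ and $\mathcal{O}_\lambda^p(\delta)\subseteq\mathcal{B}_\lambda^k(\delta)$ separately (recall that, since $\delta\in\mathbb{F}_p$, the relations $\sim_k$ and $\sim_p$ coincide on the relevant sequences, so $\mathcal{O}_\lambda^k(\delta)=\mathcal{O}_\lambda^p(\delta)$). The first inclusion is immediate and requires no new work: if $\mu\in\mathcal{B}_\lambda(P_m^k(\delta))$ for some $m\geq n$ then Corollary~\ref{cor:blocks<orbits} gives $\hat\mu+\rho_m(\delta)\sim_p\hat\lambda+\rho_m(\delta)$, that is $\mu\in\mathcal{O}_\lambda^p(m;\delta)$; taking the union over $m$ yields $\mathcal{B}_\lambda^k(\delta)\subseteq\mathcal{O}_\lambda^p(\delta)$. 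Crucially, Corollary~\ref{cor:blocks<orbits} rests only on the Jucys--Murphy calculation of Section~6 and not on the modular representation theory of $\sym_n$, so this half is already of the desired type.

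For the reverse inclusion I would follow the strategy of Proposition~\ref{prop:partminblocks} and Corollary~\ref{cor:blocks>orbits}: since $\mathcal{O}_\lambda^p(\delta)$ has the unique minimal element $\lambda_{\mathcal{O}}$ of Proposition~\ref{prop:partminimal}, it suffices to show, by downward induction on $|\lambda|$, that whenever $\lambda\neq\lambda_{\mathcal{O}}$ there is some $\nu\in\mathcal{O}_\lambda^p(\delta)$ with $|\nu|<|\lambda|$ and $\nu\in\mathcal{B}_\lambda^k(\delta)$. The point is to produce such a $\nu$ using only the blocks of the characteristic-zero algebras $P^K_m(\delta+rp)$, via the reduction-modulo-$p$ Proposition~\ref{thm:partreduceblocks} and the geometric description in Theorems~\ref{thm:part0blocks} and~\ref{thm:part0blocksgeom}. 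Unwinding the computation in Case~A of Proposition~\ref{prop:partminblocks}, a characteristic-zero block linkage through a $(\delta+rp)$-pair corresponds precisely to acting on $\hat\lambda+\rho_m(\delta)$ by an affine reflection $s_{0,j,rp}$, i.e. to swapping the marker $\vee$ with a single bead on the $\delta$-marked abacus. Cases~A, B and~D of Proposition~\ref{prop:partminblocks} are exactly the instances where such a swap both decreases the degree and keeps the configuration inside $\Lambda_{\leq m}$, and they carry over verbatim, since a linkage in $P_m^k(\delta)$ is a fortiori a linkage in the limiting block $\mathcal{B}_\lambda^k(\delta)$.

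The only case that used the symmetric group was Case~C, and this is where the limiting freedom enters. Group-theoretically there is no obstruction: the affine reflections through the $0$-th coordinate already generate $\widehat{W}_n^p$ (for instance $s_{i,j,rp}=s_{0,i,0}\,s_{0,j,rp}\,s_{0,i,0}$), so the degree-preserving symmetric-group move $\lambda\to\nu$ of Case~C can be written as a product of $\vee$-bead swaps. The obstruction in the finite algebra is only that the intermediate partitions arising from such a product may have degree exceeding $m$, so that they --- and the individual $(\delta+rp)$-pair moves between them --- are not available inside $\Lambda_{\leq m}$. The plan is therefore to apply the globalisation functor $G_{m,m'}$ for $m'\gg m$ (equivalently, to enlarge the number of beads on the abacus), which embeds $\mathcal{B}_\lambda(P_m^k(\delta))$ into $\mathcal{B}_\lambda(P_{m'}^k(\delta))$ and makes the required intermediate partitions of larger degree into genuine elements of $\Lambda_{\leq m'}$; one then realises the Case~C move as a chain of honest $(\delta+rp)$-pair moves at level $m'$, each handled by Proposition~\ref{thm:partreduceblocks}.

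I expect the main obstacle to be precisely this Case~C decomposition at the combinatorial level: one must exhibit, on the enlarged $\delta$-marked abacus, an explicit intermediate partition $\eta$ of larger degree through which $\lambda$ connects to $\nu$ (and hence to a Case-A configuration of smaller degree) by a sequence of $\vee$-bead swaps, each of which is a genuine single-row strip addition or removal, i.e. a true $(\delta+rp)$-pair with the two partitions nested. Verifying that taking enough beads makes every such intermediate a valid partition in some $\Lambda_{\leq m'}$, and that the resulting linkage descends to $\mathcal{B}_\lambda^k(\delta)$, is the technical heart of the argument; once this is in place the downward induction closes exactly as in Corollary~\ref{cor:blocks>orbits} and, together with the easy inclusion, gives $\mathcal{B}_\lambda^k(\delta)=\mathcal{O}_\lambda^p(\delta)$ without any appeal to the modular representation theory of the symmetric group.
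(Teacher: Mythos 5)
Your proposal is correct and follows essentially the same route as the paper: the forward inclusion via Corollary \ref{cor:blocks<orbits}, and the reverse inclusion by rerunning Proposition \ref{prop:partminblocks} with Case~C replaced by a chain of $\vee$--bead swaps (each an affine reflection $s_{0,j,rp}$ realised as a $(\delta+rp)$-pair linkage via Theorem \ref{thm:part0blocks} and Proposition \ref{thm:partreduceblocks}) passing through intermediate partitions of possibly larger degree. The ``technical heart'' you flag is exactly the paper's Case~C$'$: move the offending bead $j$ into the first empty space of runner $v_\lambda$ (raising the degree), then move the bead $j'$ above it back to the vacated runner, iterating until only Cases A, B, D remain.
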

	\begin{proof}
		Suppose $\mu\in\mathcal{B}_\lambda^k(\delta)$. Then $\mu\in\mathcal{B}_\lambda(P^k_n(\delta))$ for some $n\in\mathbb{N}$, and so by Corollary \ref{cor:blocks<orbits} we have $\mu\in\mathcal{O}_\lambda^p(n;\delta)\subset\mathcal{O}_\lambda^p(\delta)$. Suppose now that $\mu\in\mathcal{O}_\lambda^p(\delta)$. Again we have $\mu\in\mathcal{O}_\lambda^p(n;\delta)$ for some $n\in\mathbb{N}$. We can follow the proof of Proposition \ref{prop:partminblocks} but replace Case C with the following alternative:
		\\\\
		\textbf{Case C'}~~ The partition $\lambda$ is not a $p$-core, the runner $v_\lambda$ is not empty and all the beads on runner $v_\lambda$ are as far up as possible. Then there is a bead, say the $j$-th bead , which does not lie on runner $v_\lambda$ with a space immediately above it.
Now consider the partition $\mu$ obtained by moving bead $j$ into the first empty space of runner $v_\lambda$. We then have $|\mu|=|\lambda|+m$ for some $m\in\mathbb{Z}$, and as in \eqref{eq:partcase2} we have
			\begin{equation}
				\lambda_j-j+n+m=\delta-|\lambda|+n+rp\label{eq:partcase3'}
			\end{equation}
		for some $r\in\mathbb{Z}$. The runner $v_\mu$ is given by
			\begin{align*}
				\delta-|\mu|+n&=\delta-|\lambda|-m+n\\
						&=\lambda_j-j+n-rp.
			\end{align*}
		So runner $v_\mu$ is equal to the runner previously occupied by bead $j$ (see Figure \ref{fig:partcase3'}). Therefore $\Gamma_\delta(\mu,n)=\Gamma_\delta(\lambda,n)$, but if $m>0$ then $|\mu|>|\lambda|$ so we may not have $\mu\in\mathcal{O}_\lambda^p(n;\delta)$. However it is true that $\hat\mu +\rho_n(\delta) \sim_p \hat\lambda + \rho_n(\delta)$, so $\mu\in\mathcal{O}_\lambda^p(\delta)$. We also have that setwise the $\beta_\delta$-sequence $\beta_\delta(\mu,n)$ is
		\[(\delta-|\lambda|-m+n,\lambda_1-1+n,\dots,\lambda_j-j+m+n,\dots,0)\]
		which, by using \eqref{eq:partcase3'}, we may rewrite as
		\[(\lambda_j-j+n-rp,\lambda_1-1+n,\dots,\delta-|\lambda|+n+rp,\dots,0).\]
		Arguing as in Case A of Proposition \ref{prop:partminblocks} we see that there is some $w\in W_n$  such that
			\[\hat\mu=ws_{0,j}\cdot_{\delta-rp}\hat\lambda\]
		and so by Theorem \ref{thm:part0blocks} and Proposition \ref{thm:partreduceblocks}, $\mu\in\mathcal{B}_\lambda^k(\delta)$.
		
		Now that bead $j$ occupies the lowest position on runner $v_\lambda$, let bead $j'$ be the bead immediately above this. Let $\nu$ be the partition obtained by moving bead $j'$ into the space above the position previously occupied by bead $j$, i.e. into position $\lambda_j-j+n-p$. Then $|\nu|=|\mu|+m$ for some $m\in\mathbb{Z}$, and we repeat the argument of the previous paragraph to see that $\nu\in\mathcal{B}_\mu^k(\delta)=\mathcal{B}_\lambda^k(\delta)$.
		
		We repeat this process for each bead that falls into Case C of Proposition \ref{prop:partminblocks}. Then we are left only with beads in Case A, B,  or D, and may therefore continue with the proof of Proposition \ref{prop:partminblocks} to get the result.
		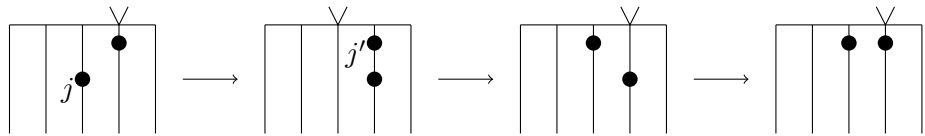
\begin{figure}
		\centering
		\begin{tikzpicture}[scale=0.48]
			\draw (1,4)--(5,4);
			\foreach \x in {1,...,5}
			\draw (\x,1)--(\x,4);
			\fill[black] (4,3.5) circle (6pt);
			\fill[black] (3,2.5) circle (6pt); \draw (2.6,2.2) node {$j$};
			\draw (0.75+3,4.5)--(1+3,4)--(1.25+3,4.5);
			
			\draw[->] (5.75,2.5)--(7.25,2.5);
			
			\draw (1+7,4)--(5+7,4);
			\foreach \x in {1,...,5}
			\draw (\x+7,1)--(\x+7,4);
			\fill[black] (4+7,3.5) circle (6pt);
			\fill[black] (3+8,2.5) circle (6pt);\draw (3.5+7,3.2) node {$j'$};
			\draw (0.75+9,4.5)--(1+9,4)--(1.25+9,4.5);
			
			\draw[->] (5.75+7,2.5)--(7.25+7,2.5);

			\draw (1+14,4)--(5+14,4);
			\foreach \x in {1,...,5}
			\draw (\x+14,1)--(\x+14,4);
			\fill[black] (3+14,3.5) circle (6pt);
			\fill[black] (4+14,2.5) circle (6pt);
			\draw (0.75+17,4.5)--(1+17,4)--(1.25+17,4.5);
			
			\draw[->] (5.75+14,2.5)--(7.25+14,2.5);
			
			\draw (1+21,4)--(5+21,4);
			\foreach \x in {1,...,5}
			\draw (\x+21,1)--(\x+21,4);
			\fill[black] (3+21,3.5) circle (6pt);
			\fill[black] (4+21,3.5) circle (6pt);
			\draw (0.75+24,4.5)--(1+24,4)--(1.25+24,4.5);

		\end{tikzpicture}
		\caption{The movement of beads in Case C'.}\label{fig:partcase3'}
	\end{figure}
	\end{proof}

\bibliographystyle{amsalpha}
\bibliography{thesis}
\end{document}